\numberwithin{equation}{section}
\theoremstyle{plain}
\newtheorem{maintheorem}{Theorem}
\newtheorem{theorem}{Theorem}[section]
\newtheorem{proposition}[theorem]{Proposition}
\newtheorem{corollary}[theorem]{Corollary}
\newtheorem{lemma}[theorem]{Lemma}
\theoremstyle{definition}
\newtheorem{remark}[theorem]{Remark}
\newtheorem{definition}{Definition}
\newtheorem{example}{Example}
\newcommand{\vep}{\varepsilon}
\newcommand{\diam}{\operatorname{diam}}
\renewcommand{\epsilon}{\varepsilon}
\newcommand{\per}{\operatorname{Per}}
\newcommand{\intt}{\operatorname{int}}
\newcommand{\Hom}{\operatorname{Hom}}
\newcommand{\supp}{\operatorname{supp}}
\newcommand{\diff}{\operatorname{Diff}}
\newcommand{\ind}{\operatorname{ind}}
\newcommand{\Diff}{{\rm Diff}^1}
\begin{document}

\title[On sufficient conditions for the transitivity]{On sufficient conditions for the transitivity of homeomorphisms}

\author{M. Carvalho}
\author{V. Coelho}
\author{L. Salgado}

\address{Maria Carvalho, CMUP \& Departamento de Matem\'atica, Faculdade de Ci\^encias da Universidade do Porto \\
Rua do Campo Alegre 687, 4169-007 Porto, Portugal.}
\email{mpcarval@fc.up.pt}

\address{Vin\'icius Coelho, Universidade Federal do Oeste da Bahia, Centro Multidisciplinar de Bom Jesus da Lapa\\
Av. Manoel Novais, 1064, Centro, 47600-000 - Bom Jesus da Lapa-BA-Brazil}
\email{viniciuscs@ufob.edu.br}

\address{Luciana Salgado, Universidade Federal do Rio de Janeiro, Instituto de Matem\'atica\\
Avenida Athos da Silveira Ramos 149 Cidade Universit\'aria, P.O. Box 68530,
21941-909 Rio de Janeiro-RJ-Brazil }
\email{lsalgado@im.ufrj.br, lucianasalgado@ufrj.br}

\subjclass[2010]{Primary: 37B65, 37C20, 37C75, 37D20.}

\keywords{Anosov diffeomorphism; Shadowing; Specification; Stability; Transitivity}


\begin{abstract}
We derive sufficient conditions for a homeomorphism of a compact metric space without isolated points to be topologically transitive. In particular, we prove that a homeomorphism with the shadowing property has a dense orbit if either it has the su-intersecting property in the whole space; or it has the barycenter property in the non-wandering set; or else it is accessible in the recurrent set. To elucidate their dynamical nature, we compare these three conditions, discuss examples with a variety of dynamics and present some applications of interest.

\end{abstract}


\date{\today}
\maketitle


\section{Introduction} \label{sec:statement-result1}

Let $M$ be a $C^\infty$ closed (that is, connected, compact, boundaryless, finite dimensional) Riemannian manifold and $\Diff(M)$ denote the set of all $C^1$ diffeomorphisms of $M$ endowed with the $C^{1}$ topology. We say that $f\in \Diff(M)$ is Anosov if $M$ is a hyperbolic set for $f$, meaning that there are constants $c > 0$ and $0 < \lambda < 1$ and a $Df-$invariant continuous splitting $T_x M = E^s_f(x) \oplus E^u_f(x)$ of the tangent space of $M$ at every $x \in M$ so that, for all non-negative integer $n$, one has 

 \begin{align*}
\|Df^n_x(v)\| &\leqslant\,  c\,\lambda^n \, \|v\|  \quad \forall v \in E^s_f(x) \\
\|Df^{-n}_x(v)\| &\leqslant\,  c\,\lambda^n \, \|v\|  \quad \forall v \in E^u_f(x).
\end{align*}

A classical conjecture regarding the classification of Anosov diffeomorphisms states that the non-wandering set of any $C^1$ Anosov diffeomorphism is the whole manifold. If true, then any $C^1$ Anosov diffeomorphism is topologically transitive and its set of periodic points is dense in the manifold.   
(We refer the reader to \cite{ABD2004} for a more general statement.) And conversely: due to Smale's spectral decomposition \cite{Smale67}, to establish that conjecture it is enough to prove that every Anosov diffeomorphism is topologically transitive. Motivated by a well-known sufficient condition for an Anosov diffeomorphism to be topologically transitive, in this work we address other dynamical assumptions on homeomorphisms with the shadowing property under which we also guarantee the existence of a dense orbit.

Let $(X,d)$ be a compact metric space and $T\colon X \to X$ be a homeomorphism. Given $x \in X$, denote by $\mathcal{O}(x)= \{T^{n}(x)\colon\, n \in \mathbb{Z} \}$ the \emph{orbit} of $x$ by $T$, and by $\overline{\mathcal{O}(x)}$ its closure in $X$. In what follows, $\Omega(T)$ stands for the set of \emph{non-wandering points} of $T$, $\per(T)$ is its subset of \emph{periodic points} and $\per_h(T)$ denotes its subset of \emph{hyperbolic} periodic points when $T$ is a diffeomorphism. Given $x, y \in X$, $n \in \mathbb{N}$ and $\delta > 0$, a $\delta-$chain of $T$ from $x$ to $y$ of length $n$ is a finite collection $x_0 = x, \,x_1,\, \cdots,\, x_n=y$ of points in $X$ such that  
$$d\big(T(x_i),\,x_{i+1}\big) \,\leqslant\, \delta \quad \quad \forall\, i \in \{0, \cdots, n-1\}.$$  A point $x$ in $X$ is \emph{chain-recurrent } if, for any $\delta > 0$, there exists a $\delta-$chain of $T$ from $x$ to $x$. We denote by $CR(T)$ the set of \emph{chain-recurrent points} of $T$. The \emph{recurrent set} of $T$, say $R(T)$, is given by those points in $X$ which are accumulated by their orbits in the future and in the past, that is, $R(T) = \big\{x \in X \colon x \in \alpha(x) \cap \omega(x)\big\}.$ The \emph{stable and unstable sets} of $x \in X$ with respect to $T$ are defined respectively by 
\begin{eqnarray*}
W^{s}_T(x) &=& \big\{y \in X \colon \, \lim_{n \, \to \, +\infty}\, d\big(T^{n}(x),T^{n}(y)\big) = 0\big\} \\
W^{u}_T(x) &=& \big\{y \in X \colon \, \lim_{n \, \to \, +\infty}\, d\big(T^{-n}(x),T^{-n}(y)\big) = 0\big\}.
\end{eqnarray*}
Given $\varepsilon > 0$, the \emph{local stable and unstable sets} with size $\varepsilon$ at $x \in X$ with respect to $T$ are defined, respectively,  by
\begin{eqnarray*}
W^{s}_{T,\,\varepsilon}(x) &=& \big\{y \in X \colon \, d\big(T^{n}(x),T^{n}(y)\big) \, < \, \varepsilon \quad \forall n \in \mathbb{N}\cup \{0\}\big\} \\
W^{u}_{T,\,\varepsilon}(x) &=& \big\{y \in X \colon \, d\big(T^{-n}(x),T^{-n}(y)\big) \, < \, \varepsilon \quad \forall n \in \mathbb{N}\cup \{0\}\big\}.
\end{eqnarray*}
More generally, for $x \in X$ and a closed nonempty set $A \subseteq X$, we consider the usual distance
$$\mathcal{D}(x, A) = \inf\,\big\{d(x,a)\colon \, a \in A\big\}$$
and define the stable and unstable sets of $A$ with respect to $T$ by 
\begin{eqnarray*}
W^{s}_T(A) &=& \big\{y \in X \colon \, \lim_{n \, \to \, +\infty}\, \mathcal{D}\big(T^{n}(y),T^{n}(A)\big) = 0\big\} \\
W^{u}_T(A) &=& \big\{y \in X \colon \, \lim_{n \, \to \, +\infty}\, \mathcal{D}\big(T^{-n}(y),T^{-n}(A)\big) = 0\big\}.
\end{eqnarray*}
The next definitions play a key role in this work.

\begin{definition}
Let $(X,d)$ be a compact metric space, $T \colon X \to X$ be a homeomorphism and $A$ be a nonempty subset of $X$. We say that $T$ has the \emph{$su-$intersecting property in $A$} if $W^{s}_T(a) \cap W^{u}_T(b) \neq \emptyset$ for every $a, b \in A$. The homeomorphism $T$ is said to have the \emph{weak $su-$intersecting property in $A$} if $W^{s}_T\big(\overline{\mathcal{O}_T(a)} \big) \cap W^{u}_T\big( \overline{\mathcal{O}_T(b)} \big) \neq \emptyset$ for every $a,b \in A$.
\end{definition}

Clearly, if an Anosov diffeomorphism $f\colon M \to M$ on a smooth closed Riemannian manifold $M$ has the weak $su-$intersecting property in $\per(f)$, then it is topologically transitive. Indeed, by Smale's spectral decomposition theorem, the set $\Omega(f)$ is a finite union $\Omega_1 \cup \cdots \cup \Omega_K$ of disjoint closed $f-$invariant pieces, the so called basic sets, on each of which $f$ is topologically transitive. Moreover, for each $1 \leqslant j \leqslant K$, each $\Omega_j$ contains a dense subset of periodic orbits and $W^{s}_f(\Omega_j) = \cup_{x \,\in\, \Omega_j} \,W^s_f(x)$, $W^{u}_f(\Omega_j) = \cup_{x \,\in\, \Omega_j} \,W^u_f(x)$ and $M = \cup_{j=1}^K \, W^s_f(\Omega_j) = \cup_{j=1}^K \, W^u_f(\Omega_j)$ (cf. \cite[Proposition~3.10]{Bowen75}). Take a repeller $\Omega_r$ and an attractor $\Omega_a$, whose existence is due to the adapted filtration for the spectral decomposition of $\Omega(f)$ (cf. \cite[\S~2]{Shub}). It is known that $\Omega_r$ contains the stable manifolds of all its points and, similarly, $\Omega_a$ contains the unstable manifolds of its elements (cf. \cite[Lemma~4.9]{Bowen75}). So, if $x \in \Omega_r \cap \per(f)$ and $y \in \Omega_a \cap \per(f)$,  then $W^s_f(x) \subset \Omega_r$, $W^u_f(x) \subset \Omega_a$ and, by the weak $su-$intersecting property in $\per(f)$, one also has $W^s_f(x)\cap W^u_f(y) \neq \emptyset$. Hence $\Omega_r \cap \Omega_a \neq \emptyset$, which is a contradiction unless $f$ has only one basic set equal to $M$. Thus $f$ is topologically transitive.

In \cite{ABC11}, F. Abdenur, C. Bonatti and S. Crovisier introduced another property on periodic orbits which is relevant in this context, and inspired a slightly different concept suggested by W. Sun and X. Tian in \cite{ST12}.  Here we explore the following generalization.

\begin{definition}\label{BP-definition}
Let $(X,d)$ be a compact metric space, $T \colon X \to X$ be a homeomorphism and $A \subseteq X$ be a nonempty set. We say that $T$ has the \emph{barycenter property in $A$} if for every $p,\, q \in A$ and any $\varepsilon >0$, there is a positive integer $N = N(\varepsilon, p, q)$ such that, for any $n_1, n_2 \in \mathbb{N}$, one can find an integer $m \in [0, N]$ and $x_0 \in X$ satisfying $d\big(T^i(x_0), T^i(p)\big) < \varepsilon$ for every $-n_1 \leqslant  i \leqslant 0$ and $d\big(T^{i+m}(x_0), T^i(q)\big) < \varepsilon$ for every $0 \leqslant  i \leqslant n_2$.
\end{definition}

It turns out that, if $T$ has the weak $su-$intersecting property in $\per(T)$, then $T$ has the barycenter property in $\per(T)$ (cf. \cite[Lemma~2.2]{ST12}). Moreover, if $X$ has no isolated points and $T$ is expansive, then $T$ has the weak $su-$intersecting property in $\per(T)$ if and only if $T$ has the barycenter property in $\per(T)$. Indeed, the proof of the aforementioned Lemma~2.2 in \cite{ST12} only requires that for every periodic point $p$ of $T$ there exists $\varepsilon > 0$ such that $W^{s}_{T,\,\varepsilon}(p) \subseteq W^{s}_{T}(p)$  and $W^{u}_{T,\,\varepsilon}(p) \subseteq W^{u}_{T}(p)$; and, surely, this condition is valid when $T$ is expansive (cf.  \cite[Lemma~1]{M79}). This ultimately implies that, within the setting of $C^1$ diffeomorphisms of a closed Riemannian manifold whose periodic points are hyperbolic, the barycenter property in the set of periodic points is equivalent to the weak $su-$intersecting property in that set. For instance,  within the family of $C^1$ $\Omega-$stable diffeomorphisms we may equally use one or the other property.

In \cite[Definition~1.1]{Brin75}, M. Brin proposed another notion which proved pertinent to the pursuit of the topological transitivity within Anosov diffeomorphisms. We will investigate a related concept.

\begin{definition}\label{def:accessible} Let $(X,d)$ be a compact metric space, $T\colon X \to X$ be a homeomorphism and $A \subseteq X$ be a nonempty set. Given $x, y \in A$, a $su-$path in $A$ from $x$ to $y$ is a finite collection of points $z_{0}, \cdots, z_{m} \in A$ such that $z_{0}=x$, $z_{m}=y$ and $z_{i} \in W^{s}(z_{i-1}) \cup W^{u}(z_{i-1})$ for every $i \in \{1,\cdots,m\}.$ We say that $T$ is \emph{accessible in $A$} if for any $x,y \in A$ there exists a $su$-path in $A$ from $x$ to $y$. 
\end{definition}

We observe that, if $T$ has the $su-$intersecting property in $A$, then $T$ is accessible in $A$. From \cite[Theorem~1.1]{Brin75}, we easily deduce that if $f\colon M \to M$ is a partially hyperbolic diffeomorphism of a closed Riemannian manifold such that $\Omega(f)=M$ and $f$ is accessible in $M$, then $f$ is topologically transitive. On the other hand, D. Dolgopyat and A. Wilkinson showed in \cite{DW} that there is a $C^1$ open dense subset $\mathcal{A}$ of the space of strong partially hyperbolic diffeomorphisms of $M$ such that every $f \in \mathcal{A}$ is accessible in $M$. However, accessibility in the whole manifold is not sufficient for topological transitivity: V. Nitica and A. Torok constructed in \cite{NT} an open set of diffeomorphisms which are accessible in the whole manifold but fail to be topologically transitive.

Our first result concerns homeomorphisms with the shadowing property (see the definition in Subsection~\ref{def-two}), linking the three previous definitions with the existence of a dense orbit.

\begin{maintheorem}\label{maintheo-weak-hmp}
Let $T\colon X \to X$ be a homeomorphism of a compact metric space $(X,d)$ without isolated points, and consider $\emptyset \neq A \subseteq X$. 
\begin{itemize}
\item[(a)] If $\overline{A} = X$ and $T$ has the barycenter property in $A$, then $T$ is topologically transitive.

\item[(b)] If $T(A) = A$, $\Omega(T) = \overline{A}$, $T$ has the barycenter property in $A$ and $\Omega(T) = CR(T)$, then $T$ is topologically transitive.

\end{itemize}
Suppose that $T$ has the shadowing property. Then: 
\begin{itemize}
\item[(c)] If $T$ is topologically transitive, then $T$ has the barycenter property in $A$.

\item[(d)] If $T(A) = A$, $\Omega(T) = \overline{A}$ and $T$ has the barycenter property in $A$, then $T$ is topologically transitive.

\item[(e)] If $T$ is accessible in $R(T)$, then $T$ is topologically transitive.

\item[(f)] If $T$ has the $su-$intersecting property in $X$, then $T$ is topologically mixing.
\end{itemize}
\end{maintheorem}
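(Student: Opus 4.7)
The plan is to handle the six claims by exploiting the natural dependence between them: (a) and (c) are direct converses relating the barycenter property to transitivity, (b) and (d) upgrade (a) to the setting where the barycenter set is only dense in $\Omega(T)$, and (e) and (f) exploit the geometric su-structure via shadowing. The recurring technical device is the pairing of the uniform continuity modulus of $T$ with the shadowing constant, and the recurring dynamical input is Conley's fundamental theorem.

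For (a), given nonempty open $U, V \subseteq X$, pick $p \in A \cap U$ and $q \in A \cap V$ with $p \ne q$ (possible because $A$ is dense and $X$ has no isolated points) and choose $\varepsilon > 0$ with $B(p,\varepsilon) \subseteq U$, $B(q,\varepsilon) \subseteq V$ and $2\varepsilon < d(p,q)$. The barycenter property with $n_1 = n_2 = 1$ produces $m \in [0,N]$ and $x_0$ with $d(x_0, p) < \varepsilon$ and $d(T^m x_0, q) < \varepsilon$; the choice of $\varepsilon$ forces $m \ge 1$, so $x_0 \in U$ and $T^m x_0 \in V$ witness topological transitivity. For the converse (c), let $\delta$ be the shadowing constant associated to $\varepsilon$ and pick $\delta' \le \delta$ via uniform continuity so that $d(\cdot, \cdot) < \delta'$ implies $d(T\cdot, T\cdot) < \delta/2$. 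Transitivity supplies $y$ and an integer $n = n(p, q, \delta')$ with $d(y, Tp) < \delta'$ and $d(T^n y, q) < \delta'$; then
\[
T^{-n_1}p,\, \ldots,\, p,\, y,\, Ty,\, \ldots,\, T^{n-1}y,\, q,\, Tq,\, \ldots,\, T^{n_2}q
\]
is a $\delta$-pseudo-orbit whose only nonzero jumps occur at the two junctions and have size less than $\delta$; shadowing yields $x_0$ and $m = n+1 \le N(\varepsilon, p, q)$ satisfying the required inequalities.

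For (b), I first show that the barycenter property forces $\overline{A}$ to be a single chain recurrence class. For distinct $p, q \in A$ and $\delta > 0$, choose $\varepsilon < \min(\delta/2, d(p,q)/2)$ using the continuity modulus; applying the barycenter property yields $m \ge 1$ and $x_0$, and
\[
p,\, T x_0,\, T^2 x_0,\, \ldots,\, T^{m-1} x_0,\, q
\]
is then a $\delta$-chain from $p$ to $q$, with the reverse chain obtained by swapping $p$ and $q$. Since chain recurrence classes are closed, $\overline{A}$ lies in a single class $C \subseteq CR(T) = \overline{A}$, so $\overline{A} = CR(T) = C$. Invoking the discrete form of Conley's fundamental theorem supplies a continuous Lyapunov function $L : X \to \mathbb{R}$ constant on $CR(T)$ with common value $c$, and strictly decreasing along orbits in $X \setminus CR(T)$. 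For any hypothetical $y \in X \setminus CR(T)$, monotonicity together with $\omega(y), \alpha(y) \subseteq CR(T)$ forces simultaneously $L(y) > c$ and $L(y) < c$, a contradiction. Hence $X = CR(T) = \overline{A}$ and (a) gives transitivity. Part (d) is immediate because shadowing in a compact metric space implies $\Omega(T) = CR(T)$, so the extra hypothesis of (b) is automatic.

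For (e) and (f), the strategy is to convert stable/unstable membership into pseudo-orbits and then invoke shadowing. In (e), each link $z_{i+1} \in W^s(z_i) \cup W^u(z_i)$ of the given su-path in $R(T)$, combined with the recurrence of the $z_i$'s, yields a finite $\delta$-pseudo-orbit from a neighborhood of $z_i$ to one of $z_{i+1}$; concatenating these across the path establishes the barycenter property on $R(T)$, and (d) applied with $A = R(T)$ (after verifying $\Omega(T) = \overline{R(T)}$ under shadowing, which follows from density of periodic orbits in $\Omega(T)$) gives transitivity. For (f), the $su$-intersecting property provides, for $a \in V$ and $b \in U$, a point $z \in W^s(a) \cap W^u(b)$ whose asymptotic tracking $T^{-k} z \approx T^{-k} b$, $T^k z \approx T^k a$ permits building, for every sufficiently large $n$, a $\delta$-pseudo-orbit of length exactly $n$ from a neighborhood of $b$ to one of $a$; the shadow of this pseudo-orbit lies in $U$ and its $n$-th iterate in $V$, establishing topological mixing. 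The main technical obstacle, common to (c), (e) and (f), is to convert the \emph{asymptotic} inclusions $z \in W^s(\cdot) \cap W^u(\cdot)$ into \emph{finite}, uniformly controlled tracking, and in (f) to reach \emph{every} sufficiently large $n$; shadowing paired with the continuity modulus of $T$ is precisely what makes these conversions possible.
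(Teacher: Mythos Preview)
Your arguments for (a)--(d) are essentially correct and close to the paper's, with one pleasant variation: in (b) the paper proves chain-transitivity of $T$ on all of $X$ directly (by routing an $\varepsilon$-chain from any $x$ through $\omega(x)\subset\overline{A}$, across $A$ via the barycenter property, and back through $\alpha(y)$ to $y$), whereas you first show $\overline{A}$ is a single chain class and then invoke a Conley Lyapunov function to force $X=CR(T)$. Both routes work. For (e) your reduction to (d) via the barycenter property on $R(T)$ is sound, but your justification of $\Omega(T)=\overline{R(T)}$ is wrong: shadowing alone does \emph{not} give density of periodic points in $\Omega(T)$ (odometers have shadowing, are minimal, and have no periodic points). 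The equality $\overline{R(T)}=\Omega(T)$ under shadowing is true, but for a different reason; the paper cites Chu--Koo for it.

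There is a genuine gap in (f). From a single $z\in W^s(a)\cap W^u(b)$ you get $T^{-k}z\approx T^{-k}b$ for large $k$ and $T^{k}z\approx T^{k}a$ for large $k$; this tracks the \emph{past} of $b$ and the \emph{future} of $a$. What it produces are $\delta$-chains from $T^{-J}b$ to $T^{K}a$ (for $J,K$ large), not from $b$ to $a$: near time $0$ there is no control relating the orbit of $z$ to $b$, and near time $n$ none relating it to $a$. So you cannot, from this single $z$, build a $\delta$-pseudo-orbit starting in a neighborhood of $b$ and ending in a neighborhood of $a$ of every sufficiently large length $n$, and your shadowing step does not yield a point in $U$ with $T^n$-image in $V$. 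The paper avoids this obstacle by an indirect route: the $su$-intersecting property is inherited by every power $T^k$, so each $T^k$ is chain-transitive; since shadowing is also inherited by powers, each $T^k$ is transitive; and shadowing together with transitivity of all powers gives mixing. If you want a direct argument, what you actually need is chain-\emph{mixing} (chains of every large length between any two points), and reaching that from a single heteroclinic-type point $z$ requires exactly the kind of ``all powers'' reasoning the paper uses.
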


We observe that the assumption $\Omega(T) = CR(T)$ holds for homeomorphisms with the shadowing property \cite[Theorem~3.1.2]{AH94}; generically in $\Diff(M)$ \cite[Corollaire~1.2]{BC04}; and by the Closing Lemma \cite{Pugh}, for $\Omega$-stable diffeomorphisms if $M$ has dimension $\geqslant 2$. However, the condition $\Omega(T) = CR(T)$ is not equivalent to shadowing, as exemplified by an irrational rotation on the unit circle, which satisfies $\Omega(T) = CR(T)$ but does not exhibit shadowing.

Every $C^1$ Anosov diffeomorphism is structurally stable, has the shadowing property, is expansive, and its non-wandering set is hyperbolic and coincides with the closure of the set of periodic points (cf. \cite{Anosov67, Bowen75, Pugh}).  Therefore, we recover from the previous items (b) and (c) the following known result: if $f\colon M \to M$ is an Anosov diffeomorphism of a smooth closed Riemannian manifold $M$  
which is either accessible in $R(f)$ or has the barycenter property in $\per(f)$, then $f$ is a topologically transitive Anosov diffeomorphism (since $\Omega(f) = \overline{\per(f)}$). Whereas, from item (a) we conclude that, conversely, if $f\colon M \to M$ is a topologically transitive Anosov diffeomorphism, then it has the barycenter property on $M$, since $\Omega(f) = M$ and $f$ has the shadowing property on $M$.

In Section~\ref{se:examples}, the reader may find examples of homeomorphisms which either have the $su-$intersecting property in $X$ but do not satisfy the shadowing property, or with the shadowing property but failing to satisfy the $su-$intersecting property in $X$.

In \cite{L16-2}, M. Lee proved that the $C^{1}$ interior of the diffeomorphisms satisfying the barycenter property in $\per(f)$ is contained in the set of $\Omega-$stable diffeomorphisms; and that, generically, if a $C^1$ diffeomorphism has the barycenter property in $\per(f)$ then it is $\Omega-$stable. We will improve this information by showing that the $C^{1}$ interior of diffeomorphisms satisfying the weak $su-$intersecting property in $\per_h(f)$ is equal to the set of topologically transitive Anosov diffeomorphisms (see Proposition~\ref{theo-c1-interior}); and by proving that, generically, if a $C^1$ diffeomorphism has the weak $su-$intersecting property in 
$\per_h(f)$, then it is a topologically transitive Anosov diffeomorphism (see Subsection~\ref{sse:generic}).
Assuming the shadowing property and expansivity, we obtain the following correlation between the previous definitions and the specification property (see the definition in Subsection~\ref{def-two}).

\begin{maintheorem}\label{maintheo-Homeo-hyperbolicmeet}
Let $(X,d)$ be a compact metric space without isolated points and $T\colon X \to X$ be an expansive homeomorphism which satisfies the shadowing property. Then the following statements are equivalent:
\begin{itemize}
\item[$(a)$] $T$ has the $su-$intersecting property in $\per(T)$.

\item[$(b)$] $T$  has the specification property.
\end{itemize}
If, in addition, $X$ is connected, then the following statements  are equivalent:
\begin{itemize}
\item[$(c)$] $T$ has the $su-$intersecting property in $X$.

\item[$(d)$] $T$ has the barycenter property in $\per(T)$.
\end{itemize} 
\end{maintheorem}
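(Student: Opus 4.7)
The plan is to prove the two equivalences separately, both leveraging Theorem~A and the spectral structure inherited from expansivity combined with shadowing: once topological transitivity is secured, $X=\Omega(T)$ is a single basic set admitting a clopen cyclic decomposition $X=C_0\sqcup\cdots\sqcup C_{m-1}$ permuted by $T$, with $T^m$ mixing on each $C_j$. For $(a)\Rightarrow(b)$, the $su-$intersecting property in $\per(T)$ implies the weak version, which by \cite[Lemma~2.2]{ST12} yields the barycenter property in $\per(T)$. Since expansivity and shadowing give $\Omega(T)=\overline{\per(T)}=CR(T)$, Theorem~A(d) provides topological transitivity, hence $\Omega(T)=X$. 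To upgrade to mixing, I argue that $m=1$: for any $p\in C_i\cap\per(T)$, the positive gap between distinct components forces $W^s_T(p), W^u_T(p)\subseteq C_i$, so if $m\geqslant 2$, picking $p\in C_0$ and $q\in C_1$ (both intersect $\per(T)$ by cyclic invariance) gives $W^s_T(p)\cap W^u_T(q)=\emptyset$, contradicting (a). Bowen's classical theorem then converts mixing plus expansivity plus shadowing into the specification property. For $(b)\Rightarrow(a)$, fix $p, q\in\per(T)$ and $\alpha$ small enough that $W^{s}_{T,\alpha}(\cdot)\subseteq W^{s}_T(\cdot)$ and $W^{u}_{T,\alpha}(\cdot)\subseteq W^{u}_T(\cdot)$, legitimate by expansivity (cf.~\cite[Lemma~1]{M79}). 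Letting $\delta$ be the corresponding shadowing tolerance, specification — and hence transitivity — allows one to construct a $\delta$-pseudo-orbit $(y_n)_{n\in\ZZ}$ that equals $T^nq$ for $n\leqslant -K$, traverses a $\delta$-chain on $-K\leqslant n\leqslant 0$ terminating at $y_0=p$, and equals $T^np$ for $n\geqslant 0$. Shadowing gives $z$ with $d(T^nz,y_n)<\alpha$ for all $n$; the forward estimates place $z\in W^s_{T,\alpha}(p)\subseteq W^s_T(p)$, while the backward ones place $T^{-K}z\in W^u_{T,\alpha}(T^{-K}q)\subseteq W^u_T(T^{-K}q)$, whence $z\in T^K(W^u_T(T^{-K}q))=W^u_T(q)$.

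For $(c)\Leftrightarrow(d)$ under connectedness, the implication $(c)\Rightarrow(d)$ is immediate: the $su-$intersecting property on $X$ restricts to $\per(T)$ and is stronger than the weak version there; under expansivity this equals the barycenter property on $\per(T)$, as recalled in the discussion preceding Theorem~B. For $(d)\Rightarrow(c)$, Theorem~A(d) again yields topological transitivity, and connectedness of $X$ prevents any cyclic decomposition with $m\geqslant 2$ — the $C_i$'s would otherwise form nontrivial disjoint clopen subsets of a connected space — so $T$ is topologically mixing and hence has the specification property by the first part. For arbitrary $x, y\in X$, the $(b)\Rightarrow(a)$ construction applies with $x, y$ replacing $p, q$ and with the $\delta$-chain extracted from chain transitivity (since now $X=\Omega(T)=CR(T)$), producing $z\in W^s_T(x)\cap W^u_T(y)$.

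\textbf{Main obstacle.} The delicate step is the construction in $(b)\Rightarrow(a)$ (and its verbatim adaptation for $(d)\Rightarrow(c)$): producing a single point lying in the intersection of a prescribed stable set and unstable set. Two technical points require care. First, positioning the transition chain strictly on the interval $-K\leqslant n\leqslant 0$, so that one single shadowing orbit can track the backward orbit of $q$ (or $y$) \emph{and} the forward orbit of $p$ (or $x$); placing the chain on the positive side would yield two distinct points, $z$ and $T^Kz$, each lying in only one of the two sets. Second, promoting the local estimate $d(T^nz,T^n\cdot)<\alpha$ to genuine membership in the global $W^s_T$ and $W^u_T$, which rests essentially on expansivity through the inclusions $W^{s}_{T,\alpha}(\cdot)\subseteq W^{s}_{T}(\cdot)$ and $W^{u}_{T,\alpha}(\cdot)\subseteq W^{u}_{T}(\cdot)$, combined with the naturality identity $T^K(W^u_T(u))=W^u_T(T^Ku)$.
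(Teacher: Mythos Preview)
Your argument is correct and the overall architecture is sound, but it differs from the paper's proof in two substantive ways, and there is one small imprecision worth flagging.

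For $(a)\Rightarrow(b)$, the paper does not use the cyclic clopen decomposition. Instead it shows that the $su$-intersecting property in $\per(T)$ is inherited by every iterate $T^k$ (Lemma~\ref{lemma-iterations-hyperbolic-meet}), that $\overline{\per(T^k)}=\Omega(T^k)$, and hence that each $T^k$ is chain-transitive (Lemma~\ref{lemma-fundamental-corollary}); shadowing then upgrades this to transitivity of every $T^k$, and \cite[Theorem~45]{KLO16} yields specification. Your route --- transitivity gives a single basic set, and the containments $W^{s}_T(p),W^{u}_T(p)\subseteq C_i$ force $m=1$ --- is a legitimate and pleasantly geometric alternative that bypasses the iteration argument. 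For $(b)\Rightarrow(a)$ and the final step of $(d)\Rightarrow(c)$, the paper passes through the two-sided limit shadowing property via \cite[Lemmas~2.2 and 2.4]{C15}, whereas your direct shadowing construction is more elementary and self-contained; the placement of the transition chain on the negative side so that $T^K\big(W^u_T(T^{-K}q)\big)=W^u_T(q)$ delivers $z\in W^s_T(p)\cap W^u_T(q)$ in one stroke is exactly the right idea.

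The one imprecision: to obtain a $\delta$-chain from $T^{-K}q$ to $p$ of length \emph{exactly} $K$ you need chain-mixing, not merely chain-transitivity --- otherwise the starting point $T^{-K}q$ and the length $K$ are coupled in a way that chain-transitivity alone does not resolve, and the shift argument would only produce a point in $W^s_T(p)\cap W^u_T(T^Lq)$ for some uncontrolled $L$. This is harmless here, since in both places you invoke it specification (hence topological mixing, hence chain-mixing) has already been established; just replace ``transitivity'' and ``chain transitivity'' by ``mixing, hence chain-mixing'' at those two spots.
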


We stress that both Theorems~\ref{maintheo-weak-hmp} and  \ref{maintheo-Homeo-hyperbolicmeet} hold without any assumption on the hyperbolicity of the set of periodic points. Their conclusions seem to hint that the barycenter property in the set of periodic points is in general weaker than the $su$-intersecting property in that set. For further discussion on this issue, we refer the reader to Example~\ref{ex:bp-not-shadowing}.

In several instances of this work it is necessary to know that, for homeomorphisms on compact metric spaces, topological transitivity is equivalent to the existence of a dense orbit (cf. \cite[Theorem~5.8]{W78}). Yet, it is also fundamental to relate one-sided topological transitivity to topological transitivity. These concepts are equivalent if the underlying compact metric space $X$ does not contain isolated points (cf. \cite{Silv} and \cite[Theorem~1.4]{AC12}); or else if the non-wandering set is the entire space $X$ (cf. \cite[Theorem~5.10]{W78}). Since the former condition concerns the phase space, whereas the latter requests prior information on the dynamics, we opted for the former, which is why it is included in the statements of the foregoing theorems.

For any $C^1$ $\Omega-$stable diffeomorphism, the next result establishes a general equivalence between the topological transitivity and several other dynamical properties.

\begin{maintheorem}\label{theo-aux-complet}
Let $M$ be a smooth closed Riemannian manifold, with dimension at least $2$, and $f\colon M \to M$ be a $C^1$ $\Omega-$stable diffeomorphism. The following statements are equivalent:
\begin{itemize}
\item[$(1)$]  $f$ has the two-sided limit shadowing property.
\item[$(2)$]  $f$ has the $su-$intersecting property in $M$.
\item[$(3)$]  $f$ has the $su-$intersecting property in $\per(f)$.
\item[$(4)$]  $f$ has the weak $su-$intersecting property in $\per(f)$.
\item[$(5)$]  There exists $A \subseteq M$ such that $f(A)=A$, $\Omega(f) = \overline{A}$ and $f$ has the barycenter property in $A$.
\item[$(6)$]  $f$ is a topologically transitive Anosov diffeomorphism.
\item[$(7)$]  $f$ has the specification property.
\item[$(8)$]  $f$ has the almost specification property.
\item[$(9)$]  $f$ has the asymptotic average shadowing property.
\item[$(10)$] $f$ has the average shadowing property.
\item[$(11)$] $f$ is chain-transitive.
\item[$(12)$] $\mathcal{P}_{f}^{erg}(M)$ is dense in $\mathcal{P}_{f}(M)$. 
\item[$(13)$] $f$ has the gluing orbit property.
\end{itemize}
\end{maintheorem}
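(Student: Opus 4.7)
The plan is to pivot the entire equivalence around item (6), showing that (6) implies each of the others and that each of the others implies (6). The key structural input is that an $\Omega$-stable $C^1$ diffeomorphism satisfies Axiom~A together with the no-cycle condition, and therefore admits Smale's spectral decomposition $\Omega(f) = \Omega_1 \sqcup \cdots \sqcup \Omega_K$ into basic sets, on each of which $f$ is topologically transitive, expansive and has the shadowing property, with $M = \cup_{j=1}^K W^s_f(\Omega_j) = \cup_{j=1}^K W^u_f(\Omega_j)$. In this framework item (6) becomes equivalent to $K=1$ together with $\Omega(f) = M$. Moreover, $\Omega$-stable diffeomorphisms automatically satisfy $\Omega(f) = CR(f)$ and have hyperbolic periodic points, so the hypotheses of Theorems~\ref{maintheo-weak-hmp} and \ref{maintheo-Homeo-hyperbolicmeet} are at our disposal throughout.

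For the direction (6) $\Rightarrow$ all the other items, I would invoke classical facts about topologically transitive Anosov diffeomorphisms: the shadowing property (Anosov), the specification property (Bowen), and the two-sided limit shadowing property; expansivity and structural stability. From specification I would then deduce the almost specification, (asymptotic) average shadowing, gluing orbit and chain-transitivity properties, together with the density of ergodic measures in the space of $f$-invariant measures (Sigmund's theorem). Items (2)-(4) follow from the dense network of stable and unstable leaves in an Anosov setting and from the transitive product structure of the basic set; item (5) is immediate with $A = M$, since Theorem~\ref{maintheo-weak-hmp}(c) provides the barycenter property on the whole space.

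For the reverse implications, the common thread is that each listed property forces chain-transitivity on $\Omega(f)$, hence $K=1$. Specification, almost specification, (asymptotic) average shadowing, gluing orbit, and the density of ergodic measures each imply, by standard arguments, that the non-wandering set is chain-transitive, ruling out the splitting into several basic sets; for an $\Omega$-stable $f$ a single basic set is simultaneously attracting and repelling in the spectral filtration, so $\Omega(f) = M$, and Axiom~A with $\Omega(f) = M$ yields that $f$ is Anosov. Item (1) implies transitivity on $\Omega(f)$ directly from the definition of two-sided limit shadowing. For (2)-(4) I would apply the argument from the introduction following Definition~1: the weak $su$-intersecting property forces an intersection between an attractor and a repeller of the spectral decomposition, which again collapses it to $K=1$. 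For (5) I would invoke Theorem~\ref{maintheo-weak-hmp}(b), since $\Omega(f) = CR(f)$ holds automatically, and for (11) the conclusion is immediate from the spectral decomposition.

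The main obstacle will be curating the web of implications without circularity and identifying the cleanest reference for each standard step "property $X$ $\Rightarrow$ chain-transitivity on $\Omega(f)$" for $X$ ranging over (7)-(10), (12) and (13); each such implication is known in isolation but the combined bookkeeping is delicate. A second, more subtle point is the step from "$f$ Axiom~A with a single basic set" to "$\Omega(f) = M$ and hence $f$ is Anosov", which is where the hypothesis $\dim M \geq 2$ enters through Pugh's Closing Lemma and the structure of the spectral filtration. Once these two backbones are in place, the remaining work is to chain the classical implications in the right order so that every item in the list sits inside the same cycle of equivalences.
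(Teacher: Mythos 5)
Your overall architecture is sound and agrees with the paper's in spirit: you correctly identify that for an $\Omega$-stable diffeomorphism the crux is to show that each listed property forces chain-transitivity, that chain-transitivity combined with $\Omega(f)=CR(f)$ and the hyperbolicity of $\Omega(f)$ collapses the spectral decomposition to a single basic set equal to $M$, and that conversely a transitive Anosov diffeomorphism enjoys every property on the list. The paper organizes this as a long cycle $(1)\Rightarrow(2)\Rightarrow\cdots\Rightarrow(11)\Rightarrow(1)$ with side branches $(6)\Rightarrow(12)\Rightarrow(11)$ and $(6)\Rightarrow(13)\Rightarrow(4)$, whereas you propose a star centred at $(6)$. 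Both are legitimate; the cycle economises on repeated appeals to the spectral decomposition, while the star makes the logical dependence on $(6)$ explicit. You also invoke the spectral decomposition ($K=1$) more explicitly than the paper, which instead works at the level of the identity $\Omega(f)=\overline{\per(f)}=CR(f)$ coming from Axiom~A, the no-cycle condition and the Closing Lemma.

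There is one genuine gap. You assert that ``the density of ergodic measures \ldots impl[ies], by standard arguments, that the non-wandering set is chain-transitive.'' This is not standard, and on its own the density of $\mathcal{P}_f^{erg}(M)$ in $\mathcal{P}_f(M)$ does not give chain-transitivity of $f$ on $M$. The paper bridges $(12)\Rightarrow(11)$ by a dedicated argument through the measure center: one shows $\mathcal{M}(M,f)\subseteq\Omega(f)$ and, since every periodic point lies in $\mathcal{M}(M,f)$, deduces $\Omega(f)=\overline{\per(f)}=CR(f)=\mathcal{M}(M,f)$; combined with the density of ergodic measures, a further lemma from~\cite{CCS24} then yields topological transitivity of $f$, hence chain-transitivity. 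Without some version of this argument your implication $(12)\Rightarrow(6)$ is unjustified. A smaller slip is the claim that item $(1)$ implies transitivity on $\Omega(f)$ ``directly from the definition'' of two-sided limit shadowing; this too needs a lemma (two-sided limit shadowing $\Rightarrow$ $su$-intersecting property in $M$, as in~\cite[Lemma~2.4]{C15}), after which one still passes through chain-transitivity. Finally, when deducing the chain $(7)\Rightarrow(8)\Rightarrow(9)\Rightarrow(10)\Rightarrow(11)$ you should be aware that these are not corollaries of one another from first principles but rely on the specific results of~\cite{PfSu07} and~\cite{KPR14}; your phrase ``standard arguments'' papers over implications that the authors needed to cite carefully.
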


Some comments are in order regarding other known sufficient conditions for an Anosov diffeomorphism to be topologically transitive, and how they compare with the $su-$intersecting property. In \cite{Brin77}, M. Brin proved that, if the local product structure of a $C^1$ Anosov manifold can be extended to the whole manifold 
(see the precise meaning of this wording in \cite[page~12]{Brin77}), then it is topologically transitive. Actually, Brin's condition implies that the diffeomorphism has the $su-$intersecting property in the whole manifold, as shown in \cite[Remark~4]{Brin77}. Another sufficient condition was proposed by B. Carvalho in \cite{C15}: if a $C^1$ Anosov diffeomorphism has the two-sided limit shadowing property (see the definition in Subsection~\ref{def-two}), then it is topologically transitive. Moreover, \cite[Lemma~2.4]{C15} ensures that if a homeomorphism  acting on a compact metric space has the two-sided limit shadowing property, then it has the $su-$intersecting property on the whole space. In Section~\ref{se:examples}, we present an example of a homeomorphism that has the $su-$intersecting property on the whole domain although it does not satisfy the two-sided limit shadowing property. In \cite{Micena22, MicenaHertz23}, we find conditions of a different nature which also ensure the topological transitivity of an Anosov diffeomorphism. On the one hand, it is known that transitive Anosov diffeomorphisms have a unique measure of maximal entropy (cf. \cite{Bowen75}); F. Micena and J. Rodriguez-Hertz proved in  \cite{MicenaHertz23} that, conversely, if a $C^1$ Anosov diffeomorphism has a unique measure of maximal entropy and every point in the manifold is regular in the sense of Oseledets's theorem \cite{Oseledets}, then the diffeomorphism is topologically transitive. On the other hand, building upon \cite[Theorem~4.14]{Bowen75}, F. Micena \cite{Micena22} linked the topological transitivity of any $C^2$ Anosov diffeomorphism to the Jacobean of the derivative on all the periodic points through the existence of an invariant probability measure equivalent to the volume on the manifold. Micena's condition is too restrictive, though, since it is known that among $C^2$ Anosov diffeomorphisms the ones that admit no invariant probability measure equivalent to the volume on the manifold form an open dense set (cf. \cite[Corollary~4.15]{Bowen75}).

The paper is organized as follows. In Section~\ref{se:def}, we set some notation and basic definitions. In Section~\ref{se:examples} we include concrete examples which show how to construct homeomorphisms with or without the $su-$intersecting property. Sections~\ref{se:proof-thmA}, \ref{se:proof-thmB} and \ref{se:proof-thmC} consist of the proofs of the main results, together with a thorough discussion about some dynamical information conveyed by the $su-$intersecting and the barycenter properties.  Section~\ref{se:appl} is made of applications.

\section{Definitions}\label{se:def}

In this section we include a brief glossary of the main concepts we will use.

Let $(X,d)$ be a compact metric space and $T\colon X \to X$ be a homeomorphism. A point $p \in X$ is called non-wandering for $T$ if for every neighborhood U of $p$ there exists $n \in \mathbb{Z}\setminus\{0\}$ such that $T^{-n} (U) \cap U \neq \emptyset$. 
A point $p$ in $X$ is periodic for $T$ if there exists $n \in \mathbb{N}$ such that $T^{n}(p)=p$.

\subsection{Topological transitivity}

A homeomorphism $T$ is topologically transitive if for every nonempty open sets $U,\,V \subset X$ there exists $n \in \mathbb{Z}$ such that $T^{n}(U)\cap V \neq \emptyset$. It is known that this is equivalent to say that $T$ has a dense orbit, that is, there is some $x \in X$ with $\{T^n(x) \colon n \in \mathbb{Z}\}$ dense in $X$ (see \cite[Theorem~1.4]{AC12} or \cite[Theorem~5.8]{W78}).

\subsection{One-sided topologically transitive}

A homeomorphism $T$ is one-sided topologically transitive if for every nonempty open sets $U,\,V \subset X$ there exists $n \in \mathbb{N}$ such that $T^{n}(U)\cap V \neq \emptyset$. According to \cite[Theorem~5.10]{W78}, a homeomorphism $T$ of a compact metric space $X$ is one-sided topologically transitive if and only if $T$ is topologically transitive and $\Omega(T) = X$.

\subsection{Topological mixing}

A homeomorphism $T$ is called \emph{topologically mixing} if for any two open nonempty sets $U,V \subset X$ there exists a positive integer $N=N(U,V)$ such that for any $n>N$ the intersection $T^n(U) \cap V$ is nonempty. Obviously, topological mixing implies topological transitivity.

\subsection{Chain-transitivity}

A map $T$ is said to be \emph{chain-recurrent} if for any $\delta >0$ and any $x \in X$ there is a $\delta-$chain from $x$ to itself. $T$ is called \emph{chain-transitive} if for any $\delta >0$ and any two points $x, y \in X$ there is a $\delta$-chain from $x$ to $y$.

\subsection{Chain-mixing}

A homeomorphism $T$ is \emph{chain-mixing} if given $\delta>0$ there exists $N \in \mathbb{N}$ such, for every $x,y \in X$ and any integer $n \geqslant N$, there is a $\delta-$chain from $x$ to $y$ of length $n$.

\subsection{Expansiveness}

A homeomorphism $T$ is expansive if there is a constant $c > 0$ such that
$$d\big(T^n(x), T^n(y)\big) \,<\, c \quad \forall \, n \in \mathbb{Z} \quad \quad \Rightarrow \quad \quad x\,=\,y.$$

\subsection{Several concepts of shadowing}\label{def-two}

For further use, we now address distinct notions of shadowing.

\subsubsection{\emph{\textbf{Shadowing}}} Given $\delta > 0$, a sequence $\{x_n\}_{n\,\in\,\mathbb{Z}}$ in $X$ is called a $\delta-$pseudo-orbit of $T$ if
$$d(T(x_n),\,x_{n+1}) \,<\, \delta \quad \forall \, n \in \mathbb{Z}.$$
Given $\varepsilon > 0$, a $\delta-$pseudo-orbit $\{x_n\}_{n\,\in\,\mathbb{Z}}$ is said to be $\varepsilon-$shadowed if there exists $y \in X$ such that
$$d(T^n(y),\,x_n) \,<\, \varepsilon \quad \forall\, n \in \mathbb{Z}.$$
A homeomorphism $T$ has the shadowing property if for any $\varepsilon > 0$ there exists $\delta>0$ such that any $\delta-$pseudo-orbit is $\varepsilon-$shadowed.

\subsubsection{\emph{\textbf{Positive limit shadowing}}} A sequence $(x_n)_{n\,\in\,\mathbb{Z}}$ in $X$ is called a positive limit pseudo-orbit if 
$$\lim_{n \, \to \, +\infty}\,d(T(x_n), x_{n+1}) \, = \, 0.$$
The sequence $\{x_n\}_{n \, \in \, n\mathbb{Z}}$ is said to be positive limit shadowed if there exists $y \in X$ such that 
$$\lim_{n \, \to \, +\infty}\,d(T^n(y),x_n) \,= \,0.$$ 
We say that $T$ has the positive limit shadowing property if every positive limit pseudo-orbit in $X$ is positive limit shadowed. The notion of negative limit shadowing property is similarly defined.

\subsubsection{\emph{\textbf{Two-sided limit shadowing}}} A sequence $(x_n)_{n \,\in\, \mathbb{Z}}$ in $X$ is a two-sided limit pseudo-orbit if 
$$\lim_{n \, \to \, +\infty} \,d(T(x_n),x_{n+1}) \,=\, 0 \,= \,\lim_{n \, \to \, -\infty} \,d(T(x_n),x_{n+1}).$$
A two-sided limit pseudo-orbit is said to be two-sided limit shadowed if there is a point $y\in X$ such that
$$\lim_{n \, \to \, +\infty} \,d(T^n(y),x_n) \, =\,0 \, = \, \lim_{n \, \to \, -\infty} \, d(T^n(y),x_n).$$
A homeomorphism $T$ has the two-sided limit shadowing property if every two-sided limit pseudo-orbit in $X$ is two-sided limit shadowed.

\subsubsection{\emph{\textbf{Average shadowing}}} Given $\delta>0$, a sequence $(x_n)_{n \, \in \, \mathbb{Z}}$ in $X$ is a 
$\delta-$average pseudo-orbit of $T$ if there is a positive integer $N_0$ such that, for every integer $n \geq N_0$ and all integer $k \geq 0$, one has
$$\frac{1}{n}\,\sum_{j=0}^{N-1} \,d(T(x_{j+k}), x_{j+k+1}) \,<\,\delta.$$
Given $\varepsilon>0$, a sequence $(x_n)_{n \, \in \, \mathbb{Z}}$ in $X$ is said to be $\varepsilon-$shadowed in average if there is $y \in X$ such that
$$\limsup_{n\, \to\, +\infty} \,\frac{1}{n}\,\sum_{j=0}^{n-1} d(T^j(y),x_j)\,< \,\varepsilon.$$
A homeomorphism $T$ has the average shadowing property if for every $\varepsilon>0$ there is $\delta>0$ such that every $\delta-$average pseudo-orbit of $T$ is $\varepsilon-$shadowed in average.

\subsubsection{\emph{\textbf{Asymptotic average shadowing}}} A sequence $(x_n)_{n \, \in \, \mathbb{Z}}$ in $X$ is an asymptotic average pseudo-orbit of $T$ provided that 
$$\lim_{n \, \to \, +\infty}\,\frac{1}{n}\,\sum_{j=0}^{n-1}d(T(x_j),x_{j+1})=0.$$
The sequence $(x_n)_{n \, \in \, \mathbb{Z}}$ is said to be asymptotically shadowed in average in $X$ if there is $y\in X$ such that
$$\lim_{n\, \to \, +\infty}\,\frac{1}{n}\,\sum_{j=0}^{n-1}\,d(T^j(y),x_j)=0.$$
A homeomorphism $T$ has the asymptotic average shadowing property if every asymptotic average pseudo-orbit of $T$ in $X$ is asymptotically shadowed in average.

\subsubsection{\emph{\textbf{Specification}}}

Given $n, N \in \mathbb{N}$, a family of $n$ orbit segments of $T$  
$$\xi = \Big\{T^{[a_j, b_j]}(x_j)\colon \,\, j \in \{1, \cdots, n\}\Big\}$$ 
is an $N-$spaced specification if $a_i - b_{i-1} \geqslant N$ for every $2 \leqslant i \leqslant n$. Given $\varepsilon > 0$, we say that a specification $\xi$ is $\varepsilon-$traced by $y \in X$ if
$$d\big(T^k(y), T^k(x_i)\big)\,< \, \varepsilon \quad \quad \forall\, a_i \,\leqslant\, k \,\leqslant\, b_i \quad \forall\, 1 \,\leqslant\, i \,\leqslant\, n. $$
The map $T$ has the specification property if for any $\varepsilon > 0$ there is $N=N(\varepsilon) \in \mathbb{N}$ such that any $N-$spaced specification $\xi$ is $\varepsilon-$traced by some $y \in X$.

\subsection{Gluing orbit property}

The next notion first appeared in \cite{ST12}, where it was called \emph{transitive specification}. It was in \cite{BV19}  that the name gluing orbit property started being used. We  refer the reader to \cite{S19} for useful remarks regarding  this concept.

A continuous map $T\colon  X \to X$ on a compact metric space $X$ satisfies the \emph{gluing orbit property} if for every $\varepsilon>0$ there exists an positive integer $N=N(\vep)$ so that, for any finite sample of points $x_1, x_2, \dots, x_k \in X$ and any positive integers $n_1, \dots, n_k$, there are positive integers $p_1, \dots, p_{k-1} \leqslant N(\vep)$ and a point $x \in X$ such that
\begin{eqnarray*}
d\big(T^j(x),\,T^j(x_1)\big) & \leqslant & \varepsilon \quad \quad \forall\, 0 \leqslant j \leqslant n_1\\
d\big(T^{j\,+\,n_1\,+\,p_1\,+\,\dots\, + \,n_{i-1}\,+\,p_{i-1}}(x),\; T^j(x_i)\big)&  \leqslant & \varepsilon \quad \quad \forall \, 2\leqslant i\leqslant k \quad \forall\, 0\leqslant j\leqslant n_i.
\end{eqnarray*}
A homeomorphism $S\colon  Y \to Y$ on a compact metric space $Y$ has the gluing orbit property if both $S$ and $S^{-1}$ comply with it.

\subsection{Measure center}

In what follows, $\mathcal{P}(X)$ stands for the space of Borel probability measures in $X$ with the weak$^*-$topology, $\mathcal{P}_{T}(X)$ is its subspace of $T-$invariant elements and $\mathcal{P}_{T}^{erg}(X)$ is the subset of ergodic measures of $\mathcal{P}_{T}(X)$.
The measure center of $T$ is the set
$$\mathcal{M}(X,T) \,=\, \overline{\bigcup_{\mu \,\in\,  \mathcal{P}_T(X)} \supp (\mu)}$$
where $\supp (\mu)$ denotes the support of the probability measure $\mu$.

\subsection{Hyperbolicity}

Given a $C^1$ diffeomorphism $f\colon M \to M$ of a closed Riemannian manifold $M$, a periodic point $p$ of $f$ with minimal period $N$ is said to be hyperbolic if the spectrum of the derivative $Df^N_p \colon T_pM \to T_pM$ at $p$ is disjoint from  the unit circle in the complex plane.

\subsection{Stability}

We say that a $C^1$ diffeomorphism $f\colon M \to M$ is structurally stable if there is a $C^1$ neighborhood $\mathcal{U}$ of $f$ such that for any $g \in \mathcal{U}$ there is a homeomorphism $h\colon M \to M$ such that $h \circ f = g \circ h$. If the topological conjugacy with neighbour diffeomorphisms is only valid within the non-wandering set, that is, there is a $C^1$ neighborhood $\mathcal{U}$ such that for any $g \in \mathcal{U}$ there is a homeomorphism $h\colon \Omega(f) \to \Omega(g)$ such that $h \circ f_{|\Omega(f)} = g_{|\Omega(g)} \circ h$, then $f$ is said to be $\Omega-$stable.

\section{Examples}\label{se:examples}

In this section we discuss examples to illustrate the dynamical nature of $su-$intersecting and the barycenter properties. A number of them hint that the assumptions in our results are somehow sharp.

\begin{example}\label{HMP-withoutPOTP-2SLSP}

We start with an example of a homeomorphism on a compact metric space which has the $su-$intersecting property in the whole domain but fails to satisfy both the two-sided limit shadowing and the shadowing property. Consider a two-sided shift $\sigma$ on a space with a finite alphabet endowed with the usual metric   which has the specification property but is not a shift of finite type. Then $\sigma$ has the $su-$intersecting property in the whole space (cf. \cite[Proposition~5]{C15TS}), even though $\sigma$ fails to satisfy both the two-sided limit shadowing property (cf. \cite[Corollary~6]{C15TS} and  the shadowing property (cf. \cite{Walters}).

\end{example}


\begin{example}\label{Homeo-generic-barycenter}

Let $(X,d)$ be a compact connected manifold, with or without boundary, of dimension $\geqslant 2$, endowed with a distance $d$. A \emph{Lebesgue-like measure} $\mu$ on $X$ is a Borel probability measure 
such that
\begin{itemize}
\item[(i)]  $\,\,\forall\, p \in X \quad \mu(\left\{ p \right\} ) = 0$;
\item[(ii)] $\,\,\mu(U) > 0$ for every nonempty open set  $U \subset X$;
\item[(iii)] $\,\, \mu\,(\partial X) = 0$.
\end{itemize}
Suppose that $X$ possesses a Lebesgue-like measure, say $\mu_0$. Denote by $\Hom(X,\mu_0)$ the set of homeomorphisms of $X$ which preserve $\mu_0$. This space is metrizable by the uniform distance for homeomorphisms, given by $D(f,g) = D_1(f,g) + D_1(f^{-1},g^{-1})$, where $D_1(f,g) = \max_{x \,\in\, X} d(f(x), g(x))$. Moreover, $(\Hom(X,\mu_0), D)$ is a Baire space.

According to \cite[Theorem~1.3, Corollary~1.4]{GL18}, a generic element in $\Hom(X,\mu_0)$ is topologically mixing and satisfies the shadowing property. Using Theorem~\ref{maintheo-weak-hmp}(c), we conclude that, $C^0$ generically in $\Hom(X,\mu_0)$, a homeomorphism has the barycenter property in $A$ for every nonempty subset $A$ of $X$.

\end{example}

\begin{example}\label{POPT-BP-NOTHMP}

We now provide an example of a homeomorphism of a compact metric space which has the barycenter property in the whole domain but fails to satisfy the $su-$intersecting property in that set. Consider the two-sided shift $\sigma \colon X \to X$ on the space $X = Y^\mathbb{Z}$ endowed with the usual metric, where $Y=\{1,2,3,4\}$. Take the matrix $A=\big(a_{i, \, j}\big)_{i, j \, \in \, \{1,2,3,4\}}$ given by
$$A \, = \, \left(\begin{array}{cccc}
0 &1 &0 &0 \\
1 &0 &1 &0 \\
0 &1 &0 &1 \\
0 &0 &1 &0
\end{array}
\right)
$$
and let $\Sigma_A \subset Y^\mathbb{Z}$ be the space of $A-$admissible sequences, that is,
$$\Sigma_A \, = \, \big\{(x_k)_{k \, \in \, \mathbb{Z}} \colon \, a_{x_k\, x_{k+1}} \,=\, 1 \quad \forall k \in \mathbb{Z}\big\}.$$
The matrix $A$ is irreducible (to check this it is enough to compute $A^2$ and $A^3$) but it is not aperiodic. Therefore, the homeomorphism $\sigma_{|\Sigma_A}\colon  \Sigma_A \to \Sigma_A$  is a shift of finite type (hence it has the shadowing property),  is topologically transitive (cf. \cite[Theorem~1.19]{W78}), but is not topologically mixing (cf. \cite[Theorem~1.31]{W78}). Moreover, the space $\Sigma_A$ is perfect (that is, it is closed, totally disconnected and has no isolated points) due to  the fact that the matrix $A$ has lines whose entries sum is equal to $2$ (cf. \cite[Proposition~2.8]{Rob}). Therefore, Theorem~\ref{maintheo-weak-hmp}(c) indicates that $\sigma_{|\Sigma_A}$ has the barycenter property in $\Sigma_A$, whereas Theorem~\ref{maintheo-weak-hmp}(e) indicates that $\sigma_{|\Sigma_A}$ fails to satisfy the $su-$intersecting property in $\Sigma_A$.  

\end{example}


\begin{example}\label{ex:bp-not-shadowing}
The next example is mentioned in \cite[Remark~1.1]{L16-2} to justify that the barycenter property in $\per(f)$, though a kind of tracing condition, is ultimately distinct from the specification property. Let $f\colon S^1 \to S^1$ be a $C^1$  diffeomorphism whose non-wandering set reduces to two fixed points which are saddle-nodes. Then $f$ does not have the shadowing property (cf. \cite[Theorem~3.1.1]{Pilyugin99}) and is not expansive (cf. \cite[Theorem~5.27]{W78}). Moreover, $f$ is not topologically transitive, so it does not have the specification property (cf. \cite[Proposition~21.3]{DGS}. However, $f$ has the $su-$intersecting property in $\per(f)$, hence the barycenter property in $\per(f)$.

We observe that, on the contrary, if $T\colon X \to X$ is a homeomorphism of a compact metric space $(X,d)$ without isolated points, $A$ is a nonempty subset of $X$ and $T$ has the specification property, then $T$ has the barycenter property in $A$. Indeed, take $p, \,q \in A$ and fix $\varepsilon >0$. Since $T$ has the specification property there exists $N=N(\varepsilon) \in \mathbb{N}$ such that any $N-$spaced specification $\xi$ is $\varepsilon-$traced by some $y \in X$. Given $n_1, n_2 \in \mathbb{N}$, define 
$$a_{1} = -n_{1}, \quad b_{1} = 0, \quad x_{1} = p, \quad  a_{2} = N, \quad b_{2} = n_{2}+N, \quad x_{2} = T^{-N}(q).$$
Note that $a_{2} - b_{1} = N$, so 
$$\xi = \Big\{T^{[a_j, b_j]}(x_j)\colon \,\, j \in \{1, 2\}\Big\}$$ 
is an $N-$spaced specification. Then $\xi$ is $\varepsilon-$traced by some $y \in X$, that is,
\begin{eqnarray*} 
d\big(T^k(y),\, T^k(p)\big) &<& \varepsilon \quad \quad \, -n_{1} \,=\, a_1 \,\leqslant\, k \,\leqslant\, b_1\, = \,0 
\\
d\big(T^\ell(y), \,T^\ell(T^{-N}(q))\big) &<& \varepsilon \quad \quad \, N \,=\, a_2 \,\leqslant\, \ell\, \leqslant \,b_2\, =\, n_{2}+N.
\end{eqnarray*}
Select $j = \ell-N$. Thus, 
$$d\big(T^{j+N}(y), \,T^j (q))\big)\,< \, \varepsilon \quad \quad \, 0 \leqslant j \leqslant n_{2} .$$
So, 
$T$ has the barycenter property in $A$.

\end{example}

\begin{example}\label{MS}

A Morse-Smale diffeomorphism of the unit circle is a simple example of an $\Omega-$stable diffeomorphism of a smooth closed Riemannian manifold which has the shadowing property (cf. \cite{Pilyugin99}), is not expansive (cf. \cite[Theorem~5.27]{W78}) and does not satisfy the barycenter property in the set of periodic points (as is easily concluded from Theorem~\ref{maintheo-weak-hmp}(d)).

\end{example}


\begin{example}\label{POPT-withoutHMP}

We now construct another example of a homeomorphism on a compact metric space $X$ possessing the shadowing property but not satisfying the $su-$intersecting property in $X$.
\medskip

\noindent \underline{Claim 1}: The identity map on a compact metric space $(X,d)$ has the shadowing property if and only if $X$ is totally disconnected (cf. \cite[Theorem~2.3.2]{AH94}).
\medskip

\noindent \underline{Claim 2}: The $su-$intersecting property is invariant by iteration.

\begin{lemma}\label{lemma-iterations-hyperbolic-meet}
Let $T\colon X \to X$ be a homeomorphism of a compact metric space $(X,d)$ without isolated points and $A \subseteq X$ a nonempty subset of $X$. If $T$ has the $su-$intersecting property in $A$ then $T^{k}$ has the $su-$intersecting property in $A$ for every positive integer $k$.
\end{lemma}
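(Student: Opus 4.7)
The plan is to reduce the statement to a simple inclusion between invariant sets. Namely, I would show that for every $x \in X$ and every positive integer $k$, one has
\[
W^{s}_{T}(x) \,\subseteq\, W^{s}_{T^k}(x) \qquad\text{and}\qquad W^{u}_{T}(x) \,\subseteq\, W^{u}_{T^k}(x).
\]
Once these inclusions are established, the lemma follows at once: given $a, b \in A$, the $su$-intersecting property of $T$ in $A$ furnishes a point $z \in W^{s}_{T}(a) \cap W^{u}_{T}(b)$, and the inclusions above imply $z \in W^{s}_{T^k}(a) \cap W^{u}_{T^k}(b)$, which is therefore nonempty.

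The proof of the two inclusions is a subsequence argument. If $y \in W^{s}_{T}(a)$, then $d(T^n(a), T^n(y)) \to 0$ as $n \to +\infty$; in particular the subsequence $d(T^{kn}(a), T^{kn}(y)) = d((T^k)^n(a), (T^k)^n(y))$ converges to zero, so $y \in W^{s}_{T^k}(a)$. The unstable inclusion follows by applying the same reasoning to $T^{-1}$, using that $(T^k)^{-n} = T^{-kn}$ and that $d(T^{-n}(b), T^{-n}(y)) \to 0$ implies $d(T^{-kn}(b), T^{-kn}(y)) \to 0$.

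Since the argument is essentially a one-line observation on subsequences of convergent sequences, I do not anticipate any real obstacle; the only thing to be careful about is that the definitions of $W^{s}_{T}$ and $W^{u}_{T}$ as stated use the limits $\lim_{n \to +\infty}$ rather than limits over $\mathbb{Z}$, so that passing to a subsequence indexed by multiples of $k$ is legitimate. No hypothesis on $X$ beyond the standing assumptions (compact metric space without isolated points) is needed for this step.
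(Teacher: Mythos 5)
Your proposal is correct and follows essentially the same route as the paper: take a point $c \in W^{s}_{T}(a) \cap W^{u}_{T}(b)$ and observe that the convergence $d(T^{n}(a),T^{n}(c)) \to 0$ (resp.\ $d(T^{-n}(b),T^{-n}(c)) \to 0$) passes to the subsequence indexed by multiples of $k$, giving $c \in W^{s}_{T^k}(a) \cap W^{u}_{T^k}(b)$. Phrasing it as the two inclusions $W^{s}_{T}(x) \subseteq W^{s}_{T^k}(x)$ and $W^{u}_{T}(x) \subseteq W^{u}_{T^k}(x)$ is a clean way to package exactly the paper's argument.
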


\begin{proof} Fix $k \in \mathbb{N}$ and take $a,b \in A$. Since $T$ has the $su-$intersecting property in $A$, we have $W^s_T(a) \cap W^{u}_T(b) \neq \emptyset$ and $W^{s}_T(b) \cap W^{u}_T(a) \neq \emptyset$. Consider $c \in W^{s}_T(a) \cap W^{u}_T(b)$. Then 
$$\lim_{n \, \to \, +\infty}\,d(T^{n}(a),T^{n}(c)) \,=\,0 \,=\, \lim_{n \, \to \, +\infty}\,d(T^{-n}(b),T^{-n}(c)).$$
Thus, 
$$\lim_{n \, \to \, +\infty}\,d((T^{k})^{n}(a),(T^{k})^{n}(c)) \,=\,0 \,=\, \lim_{n \, \to \, +\infty}\,d((T^{k})^{-n}(b),(T^{k})^{-n}(c)).$$
Hence $c \in W^{s}_{T^{k}}(a) \cap W^{u}_{T^{k}}(b)$. In a similar way we show that $W^{s}_{T^{k}}(b) \cap W^{u}_{T^{k}}(a) \neq \emptyset$.
\end{proof}

\noindent \underline{Claim 3}: The $su-$intersecting property induces some rigidity on the dynamics.

\begin{lemma}\cite[Lemma~3.6]{C15} or \cite[Lemma~4.2]{L16}\label{lemma-chain-transitive}
Let $T\colon X \to X$ be a homeomorphism of a compact metric space $(X,d)$ without isolated points. If $T$ has the $su-$intersecting property in $X$ then $T$ is chain-transitive.
\end{lemma}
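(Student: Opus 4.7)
My plan is to argue by contradiction, appealing to Conley's attractor-repeller characterization of chain-transitivity. Suppose $T$ fails to be chain-transitive on $X$. Then there exists a proper attractor $A \subsetneq X$, that is, a nonempty closed $T$-invariant set admitting a forward-trapping open neighborhood, together with its dual repeller $A^{*} := X \setminus W^{s}_{T}(A)$. Both $A$ and $A^{*}$ are nonempty, closed, $T$-invariant, and disjoint.

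The core step is to establish the two inclusions
$$W^{s}_{T}(a') \subseteq A^{*} \quad \text{for all } a' \in A^{*} \qand W^{u}_{T}(b) \subseteq A \quad \text{for all } b \in A.$$
For the first, if $y \in W^{s}_{T}(a')$, the defining limit $d(T^{n}(y), T^{n}(a')) \to 0$ together with the $T$-invariance of $A^{*}$ (so that $T^{n}(a') \in A^{*}$ for every $n$) and the closedness of $A^{*}$ forces $\omega(y) \subseteq A^{*}$; in particular $\omega(y) \cap A = \emptyset$, so $y \notin W^{s}_{T}(A)$ and therefore $y \in A^{*}$. The second inclusion is symmetric, obtained by applying the same reasoning to $T^{-1}$, under which the roles of attractor and dual repeller are swapped.

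Combining these, for any $a' \in A^{*}$ and $b \in A$,
$$W^{s}_{T}(a') \cap W^{u}_{T}(b) \subseteq A^{*} \cap A = \emptyset,$$
contradicting the $su$-intersecting property of $T$ in $X$, which demands this intersection to be nonempty. Hence $T$ must be chain-transitive.

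The main obstacle I foresee is the initial invocation of Conley's attractor-repeller decomposition of chain-transitivity; once that structural fact is cited (or reproduced via a short filtration argument using the forward-reachability sets from points of $X$), the remaining deductions are immediate consequences of the $T$-invariance and closedness of the attractor and its dual repeller.
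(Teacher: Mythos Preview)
Your argument is correct. The Conley attractor--repeller characterization of chain-transitivity is exactly the right structural tool here, and the two containments $W^{s}_{T}(a') \subseteq A^{*}$ and $W^{u}_{T}(b) \subseteq A$ follow cleanly from invariance and closedness as you indicate. One small remark: you never use the hypothesis that $X$ has no isolated points, so your proof actually establishes a slightly stronger statement.

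The paper itself does not supply a proof of this lemma; it simply cites \cite[Lemma~3.6]{C15} and \cite[Lemma~4.2]{L16}. So there is no ``paper's own proof'' to compare against directly. That said, the surrounding arguments in the paper (notably Lemma~\ref{lemma-fundamental} and Lemma~\ref{accessible-lemma}) proceed by explicitly building $\varepsilon$-chains: one picks points in $\omega(x)$ and $\alpha(y)$, invokes the intersecting hypothesis to produce intermediate points, and then concatenates orbit segments into a chain. A direct proof of the present lemma in that spirit would take $z \in W^{u}_{T}(x) \cap W^{s}_{T}(y)$, choose $N$ large enough that $T^{-N}(z)$ is $\varepsilon$-close to $T^{-N}(x)$ and $T^{N}(z)$ is $\varepsilon$-close to $T^{N}(y)$, and splice together the orbit segments of $x$, $z$, and $y$. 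Your Conley-theoretic route is more conceptual and avoids this bookkeeping; the direct construction is more elementary and self-contained but requires tracking indices. Both are standard.
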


A homeomorphism $T\colon X \to X$ of a compact metric space $(X,d)$ is said to be distal if 
$$\inf\,\{d(T^{n}(x), T^{n}(y)) \colon \, n \in \mathbb{Z}\} = 0 \quad \quad \Rightarrow \quad \quad x = y.$$
For instance, the identity map of any space is distal. 

\noindent \underline{Claim 4}: If a homeomorphism is distal and has the shadowing property, then it does not satisfy the $su-$intersecting property in the whole domain.

\begin{lemma}\label{lemma-distal}
Let $(X,d)$ be a compact metric space, containing at least two elements and without isolated points, and $T \colon X \to X$ be a distal homeomorphism. 
\begin{itemize}
\item[(a)] If $T^{k}$ is chain-transitive for every $k \in \mathbb{N}$, then $T$ does not have the shadowing property.
\item[(b)] If $T$ has the shadowing property, then $T$ does not satisfy the $su-$intersecting property in $X$.
\end{itemize}
\end{lemma}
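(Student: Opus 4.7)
The plan for part (a) is to argue by contradiction. Suppose $T$ has the shadowing property. The hypothesis that $T^{k}$ is chain-transitive for every $k \in \mathbb{N}$ is precisely the statement that $T$ is chain-mixing. Combined with shadowing, this forces $T$ to be topologically mixing: given balls $U = B(x,\varepsilon)$ and $V = B(y,\varepsilon)$, shadowing supplies a tolerance $\delta > 0$ such that every $\delta$-pseudo-orbit is $\varepsilon$-shadowed; chain-mixing then yields an integer $N$ beyond which there is a $\delta$-pseudo-orbit of length $n$ from $x$ to $y$ for every $n \geqslant N$, and any point $\varepsilon$-shadowing it witnesses $T^{n}(U) \cap V \neq \emptyset$. (This is also the classical equivalence recorded in \cite{AH94}.)

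The next step is to contradict topological mixing via distality. Topological mixing implies topological weak mixing, equivalently, the topological transitivity of $T \times T$ on the compact metric space $X \times X$; hence there must exist a dense $(T\times T)$-orbit. I will show no such orbit exists. For any $(a,b) \in X\times X$, either $a = b$, in which case the $(T\times T)$-orbit lies in the diagonal $\Delta$, a proper closed subset of $X\times X$ because $|X| \geqslant 2$; or else $a \neq b$, and distality supplies $\alpha := \inf_{n \in \mathbb{Z}} d(T^{n}a, T^{n}b) > 0$, confining the orbit to the closed set $F_{\alpha} := \{(u,v) \in X\times X : d(u,v) \geqslant \alpha\}$, which is proper since $\Delta \subseteq (X\times X) \setminus F_{\alpha}$ and $\Delta \neq \emptyset$. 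In either case the orbit fails to be dense, contradicting the topological transitivity of $T \times T$; hence $T$ cannot have the shadowing property.

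Part (b) will follow as a short corollary of (a). Assume $T$ has the $su$-intersecting property in $X$. Lemma~\ref{lemma-iterations-hyperbolic-meet} transfers this property to every iterate $T^{k}$, and Lemma~\ref{lemma-chain-transitive} then implies that each $T^{k}$ is chain-transitive. Applying part (a) to the distal homeomorphism $T$ forces $T$ to fail the shadowing property, contradicting the hypothesis of (b).

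The main technical point is the first implication: packaging chain-mixing and shadowing into topological mixing. Once this is secured, the conclusion reduces to a clean topological observation—distality forces every off-diagonal product orbit to remain a positive distance from $\Delta$, so $T \times T$ cannot be topologically transitive, and a distal homeomorphism with shadowing therefore cannot be topologically mixing on a space with more than one point.
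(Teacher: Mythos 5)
Your proof of part (b) is identical to the paper's. For part (a), however, the paper simply cites Lemma~5.2 of \cite{GU07}, whereas you reconstruct a self-contained argument: shadowing plus ``all powers chain-transitive'' forces topological mixing, hence topological weak mixing (topological transitivity of $T\times T$), and then distality confines every product orbit either to the diagonal $\Delta$ (when $a=b$) or to the closed proper set $F_\alpha=\{(u,v): d(u,v)\geqslant\alpha\}$ (when $a\neq b$, with $\alpha=\inf_{n\in\mathbb{Z}}d(T^na,T^nb)>0$), so no dense $(T\times T)$-orbit can exist once $|X|\geqslant 2$. This is a genuinely different and more transparent route than the paper's citation, and the distality-kills-weak-mixing step is a clean observation worth making explicit. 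Two small points deserve tightening. First, the assertion that ``$T^k$ chain-transitive for every $k$'' \emph{is precisely} chain-mixing is a real theorem, not a rephrasing; it would be cleaner either to cite it properly or to bypass it entirely by arguing as the paper does in Theorem~\ref{maintheo-weak-hmp}(f): each $T^k$ has shadowing (from $T$), so chain-transitivity of $T^k$ gives topological transitivity of $T^k$, and then \cite[Theorem~45]{KLO16} yields topological mixing of $T$ directly. Second, when invoking shadowing on a finite $\delta$-chain from $x$ to $y$, you should note that the chain must be extended to a bi-infinite $\delta$-pseudo-orbit (prepend the backward $T$-orbit of $x$, append the forward $T$-orbit of $y$) before shadowing applies; this is routine but worth saying. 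With those cosmetic additions the argument is complete and correct.
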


\begin{proof}
Item $(a)$ is Lemma 5.2 in \cite{GU07}. Concerning item $(b)$, assume that $T$ has the shadowing property and, by contradiction, that it also has the $su-$intersecting property in $X$. By Lemma~\ref{lemma-iterations-hyperbolic-meet}, we know that $T^{k}$ has the $su-$intersecting property in $X$ for every positive integer $k$. Using Lemma~\ref{lemma-chain-transitive}, we conclude that $T^{k}$ is chain-transitive for every positive integer $k$. Therefore, by item $(a)$ of Lemma~\ref{lemma-distal}, $T$ cannot have the shadowing property. 
\end{proof}

Consequently, the identity map on a totally disconnected compact metric space $X$, containing at least two elements and without isolated points, has the shadowing property but does not satisfy the $su-$intersecting property in $X$, nor the barycenter property in $X$ according to Theorem~\ref{maintheo-weak-hmp}(d).

On the other hand, the identity map (like any other rational rotation) of the unit circle has a non-wandering set equal to the closure of the set of periodic points, but it does not have the shadowing property nor the barycenter property in that set.

\end{example}


\begin{example}\label{ex:metade-bp}

Consider the compact set $X =\big\{ 0,1 \big\} \cup \big\{\frac{1}{n}\colon \, n \geqslant 2 \big\} \cup \big\{1-\frac{1}{n}\colon \, n \geqslant 3 \big\}$ with the induced topology as a subset of $\mathbb{R}$. Define the homeomorphism $T\colon X \to X$ by $T(0)=0$, $T(1)=1$, and $T$ maps any other point to the next point on the right. Then
\begin{eqnarray*}
W^s_T(1) &=& \big\{ 1 \big\} \cup \big\{1/n \,\colon \, n \geqslant 2 \big\} \cup \big\{1-1/n \,\colon \, n \geqslant 3 \big\} \quad \quad \text{and} \quad \quad 
W^u_T(1) \,=\, \big\{ 1 \big\} \\
W^u_T(0) &=& \big\{0 \big\} \cup \big\{1/n \,\colon \, n \geqslant 2 \big\} \cup \big\{1-1/n \,\colon \, n \geqslant 3 \big\}\quad \quad \text{and} \quad \quad W^s_T(0) \,=\, \{0\}. 
\end{eqnarray*}
So, $T$ does not have the $su-$intersecting property in $\per(T)$. Moreover, if $p=1$ and $q=0$ then, given $0 < \varepsilon < 1/2$, the subset of the orbit of $1/2$ by $T$ outside the union $[0,\varepsilon[ \,\cup \,\,]1-\varepsilon, 1]$ is finite. Let
\begin{eqnarray*}
x_0 &=&  \max \, \{T^{-n}(1/2) \in [0, \varepsilon[ \,\colon \,n \in \mathbb{N}\}\\
N &=& \min\,\{k \in \mathbb{N}\,\colon T^k(x_0) \in \,\,]1-\varepsilon, 1]\}.
\end{eqnarray*}
Therefore, for any $n_1, n_2 \in \mathbb{N}$, one can find an integer $m = N  \in [0, N]$ and $x_0 \in X$ satisfying $d\big(T^i(x_0), T^i(p)\big) < \varepsilon$ for every $-n_1 \leqslant  i \leqslant 0$ and $d\big(T^{i+m}(x_0), T^i(q)\big) < \varepsilon$ for every $0 \leqslant  i \leqslant n_2$. Thus, the pair $p, q$ complies with the condition of the barycenter property in $\per(T)$. However, if we exchange the fixed points this condition is no longer valid. Thus, $T$ does not have the barycenter property in $\per(T)$. We note that $\Omega(T)=\overline{\per(T)}$, $T$ has the shadowing property (see \cite[Proposition~3]{Pen-Eeu}) and is topologically transitive. However, $X$ has isolated points and the conclusion of Theorem~\ref{maintheo-weak-hmp}(c) does not hold.
\end{example}


\begin{example}\label{ex:almostA}

The next example calls our attention to the relevance of the uniform hyper\-bolicity assumption in Theorem~\ref{theo-aux-complet}. Let $f\colon M \to M$ be a $C^1$  Anosov diffeomorphism of the $2-$dimensional torus and $g\colon M \to M$ be a $C^1$ almost Anosov diffeomorphism obtained by H. Hu in \cite{Hu} through a first perturbation on $f$. The resulting diffeomorphism $g$ is uniformly hyperbolic except at one fixed point $p$, where $Df_p$ is the identity map. The stable and unstable foliations of $f$ persist under the perturbation, as well as their intersections. So $g$ has the $su-$intersecting property in $\per(g)$, inherited from $f$. By construction, $g$ is topologically transitive, but  is not an Anosov diffeomorphism. A similar construction may be found in \cite{HY}.

Another way to produce this kind of systems is to start with an Anosov diffeomorphism $f$ and bend the unstable manifold of a fixed point $p$ at a heteroclinic intersection point $q$ until the stable and unstable manifolds of $p$ become tangent at $q$, and only at $q$. Such a perturbation was described by H. Enrich in \cite{Enrich}, where a cubic tangency is created, and by A. Gogolev in \cite{Gogolev}, who deals with a transverse quadratic tangency. Again, the resulting diffeomorphism is topologically conjugate to $f$ and all periodic orbits remain hyperbolic, so $g$ is topologically transitive and has the $su-$intersecting property in $\per(g)$. Yet, it is not an Anosov diffeomorphism.

\end{example}


\begin{example}\label{ex:acc-noshad}

Both Theorems~\ref{maintheo-weak-hmp} and \ref{maintheo-Homeo-hyperbolicmeet} assume that the homeomorphism has the shadowing property. The next example indicates that this is an essential hypothesis to obtain topological transitivity.

Recall that, in \cite{Brin75}, M. Brin proved that if $f\colon M \to M$ is a $C^1$ strong partially hyperbolic diffeomorphism of a closed Riemannian manifold such that $\Omega(f)=M$ and $f$ is accessible in $M$, then $f$ is topologically transitive. In order to show that Brin's result cannot be generalized to the case where the chain recurrent set of $f$ is the whole manifold (that is, when $f$ is chain-transitive), S. Gan and Y. Shi constructed in \cite{GS17} a partially hyperbolic diffeomorphism on the $3-$dimensional torus $\mathbb{T}^3$ which is accessible in $M$ and chain-transitive, but fails to be topologically transitive. Notice, however, that, as a chain-transitive diffeomorphism which has the shadowing property is topologically transitive \cite[Lemma~2.1]{L12}, the example of S. Gan and Y. Shi has no such property. 

\end{example}

\section{Proof of Theorem~\ref{maintheo-weak-hmp}}\label{se:proof-thmA}

In this section we shall explain why, under the assumptions of Theorem~\ref{maintheo-weak-hmp}, the topological transitivity implies that the barycenter property is valid in any nonempty subset $A$ of $X$; and, conversely, for what reason the barycenter property in an invariant subset $A$ such that $\Omega(T) = \overline{A}$ yields topological transitivity. 

Let $T\colon X \to X$ be a homeomorphism of a compact metric space $(X,d)$ without isolated points.


\noindent $\textbf{(a)}$ Suppose that $A$ is dense in $X$ and $T$ has the barycenter property in $A$. Let $U$ and $V$ be nonempty open subsets of $X$. Since $A$ is dense in $X$, there are $p \in U$ and $q \in V$ such that $p,q \in A$; moreover, there exists $\varepsilon>0$ such that $B(p,\varepsilon) \subseteq U$ and $B(q,\varepsilon) \subseteq V$. By the barycenter property, there is a positive integer $N = N(\varepsilon, p, q)$ such that, for  $n_1 =1 =n_2$, one can find an integer $m \in [0, N]$ and $x_0 \in X$ satisfying $d\big(T^i(x_0), T^i(p)\big) < \varepsilon$ for every $-n_1 \leqslant  i \leqslant 0$ and $d\big(T^{i+m}(x_0), T^i(q)\big) < \varepsilon$ for every $0 \leqslant  i \leqslant n_2$. In particular, $d\big(x_0, p\big) < \varepsilon$  and $d\big(T^{m}(x_0), q\big) < \varepsilon$. So, $T^{m} (U) \cap V \neq \emptyset$. Hence $T$ is topologically transitive.


\noindent $\textbf{(b)}$  Suppose that $\Omega(T) = CR(T)$ and that there is a subset $A$ of $X$ such that $T(A) = A$, $\Omega(T) = \overline{A}$ and $T$ has the barycenter property in $A$. We start by showing that, under these assumptions, $T$ is chain-transitive.

\begin{lemma}\label{lemma-fundamental}
Let $T\colon X \to X$ be a homeomorphism of a compact metric space $(X,d)$. If there is a subset $A$ of $X$ such that $T(A)=A$, $\Omega(T) =  \overline{A}$ and $T$ has the barycenter property in $A$, then $T$ is chain-transitive.
\end{lemma}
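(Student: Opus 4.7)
The goal is to show that for every $\delta > 0$ and every $x, y \in X$ there is a $\delta$-chain from $x$ to $y$. The strategy is to reduce the construction to the case where both endpoints lie in $A$ and then to produce the central portion of the chain from the barycenter property, closing the remaining degenerate case by chain-recurrence.

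\textbf{Reduction to endpoints in $A$.} For any $x \in X$ the $\omega$-limit set $\omega(x)$ is a nonempty subset of $\Omega(T) = \overline{A}$, so by density of $A$ in $\overline{A}$ one can pick $p \in A$ arbitrarily close to some $a \in \omega(x)$. Choosing $k_1$ with $T^{k_1+1}(x)$ close to $a$ yields the $\delta$-chain $x, T(x), \ldots, T^{k_1}(x), p$, whose interior gaps are identically zero and whose final gap is bounded by a triangle inequality. A symmetric construction, using $\alpha(y) \subseteq \overline{A}$ together with the uniform continuity of $T$ at the seam, produces a $\delta$-chain $q, T^{-k_2+1}(y), \ldots, T^{-1}(y), y$ with $q \in A$. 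It therefore suffices to construct a $\delta$-chain from $p$ to $q$ for an arbitrary pair $p, q \in A$.

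\textbf{Central chain via the barycenter property, and main obstacle.} Fix $\varepsilon > 0$ small enough that $d(u,v) < 2\varepsilon$ implies $d(T(u),T(v)) < \delta/2$, and such that $\varepsilon < \delta/2$. Applying the barycenter property to $(p,q)$ with $n_1 = n_2 = 1$ produces $N = N(\varepsilon,p,q)$, an integer $m \in [0,N]$, and $x_0 \in X$ satisfying $d(x_0, p) < \varepsilon$, $d(T^m(x_0), q) < \varepsilon$, and $d(T^{m+1}(x_0), T(q)) < \varepsilon$. When $m \geq 1$, the finite sequence
$$p,\; T(x_0),\; T^2(x_0),\; \ldots,\; T^{m-1}(x_0),\; q$$
is the desired $\delta$-chain from $p$ to $q$: its internal gaps vanish (they are genuine iterates of $T$), its opening gap $d(T(p),T(x_0))$ is below $\delta/2$ by uniform continuity, and its closing gap equals $d(T^m(x_0),q) < \varepsilon$. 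The main obstacle is the degenerate case $m = 0$, in which the barycenter property yields only $d(p,q) < 2\varepsilon$ and no orbital bridge. The resolution rests on the fact that $q \in A \subseteq \overline{A} = \Omega(T) \subseteq CR(T)$, since every non-wandering point of a homeomorphism is chain-recurrent. So there exists a $(\delta/2)$-chain $q = w_0, w_1, \ldots, w_\ell = q$ of some length $\ell \geq 1$, and substituting $p$ for $w_0$ still produces a $\delta$-chain $p, w_1, \ldots, w_\ell = q$, because the new opening gap obeys $d(T(p), w_1) \leq d(T(p), T(q)) + d(T(q), w_1) < \delta/2 + \delta/2$ thanks to the uniform-continuity control on $d(p,q) < 2\varepsilon$. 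Concatenating the three $\delta$-chains $x \to p \to q \to y$ gives the required $\delta$-chain from $x$ to $y$ and proves chain-transitivity.
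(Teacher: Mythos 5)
Your proposal is correct and follows the same overall strategy as the paper's proof: approximate points of $\omega(x)$ and $\alpha(y)$ by $p,q\in A$, bridge $p$ to $q$ with the barycenter property, and concatenate. The two arguments diverge in two technical respects that are worth noting. First, the paper feeds the shifted pair $(T(p),\,T^{-1}(q))$ into the barycenter property (permissible because $T(A)=A$), so that the two tracing bounds it returns, $d(z,T(p))<\varepsilon$ and $d(T^m(z),T^{-1}(q))<\varepsilon$, align directly with the chain gaps at the seams $p\to z$ and $T^{m-1}(z)\to T^{-1}(q)$ and at $T^{-1}(q)\to T^{-\ell}(y)$, with no appeal to uniform continuity. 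You instead apply the barycenter to $(p,q)$ and use uniform continuity of $T$ to push the bound $d(p,x_0)<\varepsilon$ forward to $d(T(p),T(x_0))<\delta/2$; this is more elementary, and a pleasant side effect is that your argument never uses $T(A)=A$ at all, so it proves the lemma under the weaker hypothesis that $A$ is merely dense in $\Omega(T)$ and carries the barycenter property. Second, you isolate and resolve the degenerate case $m=0$, in which the barycenter inequalities collapse to $d(p,q)<2\varepsilon$ and yield no orbital segment: you invoke $q\in\Omega(T)\subseteq CR(T)$ to produce a nontrivial $\delta/2$-chain from $q$ to itself and then splice $p$ in at the start, controlling the new opening gap by uniform continuity. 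The paper's chain construction as written does not explicitly address the analogous $m=0$ configuration (there the relevant gap is $d(T(p),T^{-1}(q))<2\varepsilon$ rather than $<\varepsilon$); this is harmless since one can begin with $\varepsilon/2$, but your explicit treatment is the more careful of the two.
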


\begin{proof} Take $x,y \in X$ and fix $\varepsilon>0$. We are looking for an $\varepsilon-$chain from $x$ to $y$, that is, a finite set of points in $X$, say $x_0 = x,\, x_{1}, \cdots, x_{N} = y$, such that 
$$
d(T(x_{i}), x_{i+1}) \,<\, \varepsilon \quad \quad \forall \, i \in \{0,1,2,\cdots, N-1\}.
$$
Given $a \in \omega(x) \subseteq  \Omega(T)$  and $b \in  \alpha(y)\subseteq  \Omega(T)$, there exist $k, \ell \in \mathbb{N}$ such that 
\begin{equation}\label{eq:kell}
d(T^{k}(x), a) \,<\,\frac{\varepsilon}{2} \quad \quad \text{and} \quad \quad  d(T^{-\ell}(y), b) \,<\,\frac{\varepsilon}{2}.
\end{equation}
Since $\Omega(T) = \overline{A}$ and $a,b \in \Omega(T)$, there are $p, \,q \in A$ such that 
\begin{equation}\label{eq:close}
d(p,a)\, \,< \frac{\varepsilon}{2} \quad \quad \text{and} \quad \quad  d(q,b) \,<\, \frac{\varepsilon}{2}.
\end{equation} 
From \eqref{eq:close} and \eqref{eq:kell}, we deduce that
\begin{eqnarray}\label{eq:dist}
d(T^{k}(x),p) &\leqslant& d(T^{k}(x),a)+ d(a,p) \,<\,\frac{\varepsilon}{2} + \frac{\varepsilon}{2} \,=\, \varepsilon \nonumber \\
d(T^{-\ell}(y), q) &\leqslant & d(T^{-\ell}(y), b)  + d(q, b) \,< \,\frac{\varepsilon}{2} + \frac{\varepsilon}{2} \,=\, \varepsilon. 
\end{eqnarray}
Summoning the barycenter property for $T(p)$ and $T^{-1}(q)$, which is possible because $T(A) = A$, we may take a positive integer $N= N(\varepsilon, T(p), T^{-1}(q))$ such that, for any $n_1, n_2 \in \mathbb{N}$, one can find an integer $m \in [0, N]$ and $z \in X$ satisfying 
\begin{eqnarray*}
d\big(T^i(z), T^i(T(p))\big) &<& \varepsilon \quad \quad \forall\, -n_1 \,\leqslant\,  i \,\leqslant\, 0 \\
d\big(T^{i+m}(z), T^i(T^{-1}(q))\big) &<& \varepsilon \quad \quad \forall \,\, 0\, \leqslant\,  i\, \leqslant\, n_2.
\end{eqnarray*}
In particular, 
\begin{equation}\label{eq:zpq}
d\big(z, T(p) \big) \,<\, \varepsilon \quad \quad \text{and} \quad \quad d\big(T^{m}(z), T^{-1}(q)\big) \,<\, \varepsilon
\end{equation}
Define
\begin{align*}
x_{0}&= x\\
x_{1}&= T(x) & x_{2}&= T^2(x) & & \cdots & x_{k-1}&=  T^{k-1}(x)\\ 
x_{k}&= p & x_{k+1}&= z  & &\cdots & x_{k+m}&= T^{m-1} (z)\\
x_{k+m+1}&= T^{-1}(q) & x_{k+m+2}&= T^{-\ell}(y) & &\cdots & x_{k +m+2 +(\ell-1)}&= T^{-1}(y)\\
x_{k +m+2 +\ell}&= y.
\end{align*}
\smallskip

\noindent From the inequalities \eqref{eq:dist} and \eqref{eq:zpq}, we conclude that the points $(x_{i})_{i=1}^{k +m+2 +\ell}$ form an $\varepsilon-$chain linking $x$ to $y$. So, $T$ is chain-transitive.
\end{proof}

By Lemma~\ref{lemma-fundamental}, one has that $T$ is chain-transitive, so $CR(T)=X$. Thus, $\Omega(T) = X$ since, by assumption, $\Omega(T) = CR(T)$. From $\Omega(T) = \overline{A}$ and $\Omega(T) = X$, we obtain that $A$ is dense in $X$. So the hypothesis on item (a) of Theorem \ref{maintheo-weak-hmp} are fulfilled, and therefore $T$ is topologically transitive.

\noindent $\textbf{(c)}$ Suppose that $T$ is topologically transitive and has the shadowing property. The following auxiliary result provides the maximum length of a chain between any two elements of $X$  
when they are subject to a topologically transitive dynamics.

\begin{lemma}\label{lemma:transitive-chain}
\emph{Let $T\colon X \to X$ be a homeomorphism of a compact metric space $(X,d)$ without isolated points. If $T$ is topologically transitive, then for every $\xi>0$ there exists a positive integer $N =N(\xi)$ such that for any $a,b \in X$ there is a $\xi-$chain from $a$ to $b$ with  length $L$, where $2 \leq  L  \leq N$.}
\end{lemma}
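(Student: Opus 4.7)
The plan is to reduce the problem to finding, for each pair of balls in a fine finite cover of $X$, an orbit segment of length at least $2$ going from one ball to the other, and to extend such segments to arbitrary endpoints $a,b$ via uniform continuity. Specifically, fix $\xi>0$; by uniform continuity of $T$ on the compact space $X$, pick $\delta>0$ with $d(u,v)<\delta \Rightarrow d(Tu,Tv)<\xi$, and cover $X$ by finitely many open balls $B_1,\dots,B_K$ of common radius $r<\min\{\delta,\xi\}/2$.

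Next, I upgrade two-sided topological transitivity to a dense \emph{forward} orbit. Given a transitive point $z$ (with dense two-sided orbit) and any nonempty open $U\subseteq X$, the set $\{n\in\mathbb{Z}:T^n(z)\in U\}$ must be infinite, for otherwise removing its (finitely many) iterates from $U$ would leave a nonempty open subset (since $X$ has no isolated points, $U$ is uncountable) disjoint from the orbit of $z$, contradicting density. Taking two distinct visit times $n_1<n_2$ then yields $T^{n_2-n_1}(U)\cap U\neq\emptyset$ with $n_2-n_1>0$, so every point of $X$ is non-wandering, i.e., $\Omega(T)=X$; by \cite[Theorem~5.10]{W78}, recalled in Subsection~2.2, $T$ is one-sided topologically transitive and some $z\in X$ has a dense forward orbit.

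For each index $i$ pick $m_i\geqslant 0$ with $y_i:=T^{m_i}(z)\in B_i$. The forward orbit of $y_i$ is a cofinite tail of that of $z$, hence still dense, so for every ordered pair $(i,j)$ the set $\{n\geqslant 2:T^n(y_i)\in B_j\}$ is nonempty (by the same no-isolated-points argument applied to the forward orbit); fix some $n_{ij}$ in it and set $N:=\max_{i,j}n_{ij}$. Given $a,b\in X$, choose $i,j$ with $a\in B_i$ and $b\in B_j$, and define
$$x_0=a,\qquad x_k=T^k(y_i)\ \text{for}\ 1\leqslant k\leqslant n_{ij}-1,\qquad x_{n_{ij}}=b.$$
The chain condition at $k=0$ follows from uniform continuity, since $d(a,y_i)<2r<\delta$; at indices $1\leqslant k\leqslant n_{ij}-2$ it is trivial, since $x_{k+1}=T(x_k)$; at $k=n_{ij}-1$ it follows from the fact that $T^{n_{ij}}(y_i)$ and $b$ both lie in $B_j$, which has diameter $<2r<\xi$. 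The result is a $\xi$-chain from $a$ to $b$ of length $n_{ij}\in[2,N]$. The main obstacle is securing $n_{ij}\geqslant 2$ rather than merely $n_{ij}\neq 0$: this is precisely what forces the upgrade from two-sided to one-sided transitivity, which in turn relies crucially on the absence of isolated points in $X$.
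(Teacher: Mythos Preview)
Your proof is correct and follows essentially the same strategy as the paper: upgrade topological transitivity to one-sided transitivity via the no-isolated-points hypothesis, take a finite cover of $X$ by sets of small diameter, and use transitivity to produce uniformly bounded orbit segments connecting any two cover elements. The only cosmetic difference is that you invoke uniform continuity of $T$ to control the first step of the chain, whereas the paper avoids this by locating $T(a)$ (rather than $a$) in a cover element, so that the first jump already occurs inside a single small set.
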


\begin{proof}
Recall that one-sided topological transitivity is equivalent to topological transitivity whenever the compact metric space does not contain isolated points (cf. \cite[Theorem~1.4]{AC12}). So, $T$ is one-sided topologically transitive. Consider an open finite subcover of $X$, given by $\{U_{1}\cdots, U_{s}\}$, such that $\diam(U_{i}) < \xi $ for every $i \in \{1,\cdots, s\}$. Thus, by the one-sided transitivity, for each pair $(i,j) \in \{1,\cdots, s\}\times \{1,\cdots, s\}$ there exists $t_{(i,j)} \in \mathbb{N}$ such that $T^{-t_{(i,j)}}(U_{i}) \cap U_{j} \neq \emptyset$. Define 
$$N \,=\, N(\xi) \,=\, 2+\max \big\{ t_{(i,j)}\colon \, i,j \in \{1,\cdots, s \}\big\}.$$
We claim that for any $a,b \in X$  there is a $\xi-$chain from $a$ to $b$ of length $p$ with $2 \leq  p  \leq N$. Indeed, since $X = \bigcup_{i=1}^{s} U_{i}$, there are $r, m$ such that $T(a) \in U_{r}$ and $b \in U_{m}$. Using that $T^{-t_{(m,r)}}(U_{m}) \cap U_{r} \neq \emptyset$, take $c \in U_{r}$ such that $T^{t_{(m,r)}} (c) \in U_{m}$. Then, define a $\xi-$pseudo-orbit as follows: 
$$x_{0} \,=\, a,\quad  x_{1} \,=\, c,\quad    x_{2} \,=\, T(c), \, \cdots, \, x_{t_{(m,r)}} \,=\, T^{t_{(m,r)} -1} (c), \quad  x_{t_{(m,r)} \,+\,1} \,=\,b$$
Note that the length of this chain is $L = t_{(m,r)} + 2 $; hence $2 \leq L  \leq N$.
\end{proof}

Let $A$ be a nonempty subset of $X$. Take $p, \, q \in A$ and fix $\varepsilon>0$.  Let $\delta > 0$, provided by the shadowing property, be such that every $\delta-$pseudo-orbit of $T$ is $\varepsilon-$shadowed in $X$. Consider $U = B(T(p), \delta)$ and $V = B(q,\delta)$, where $B(a,r)$ stands for the open ball in $(X,d)$ centered at $a$ with radius $r$.  From \cite[Theorem~1.4]{AC12}, we know that one-sided topological transitivity is equivalent to topological transitivity whenever the compact metric space does not contain isolated points. So, $T$ is one-sided topologically transitive.  Applying Lemma~\ref{lemma:transitive-chain} when $\xi = \delta$, we find a positive integer $N =N(\delta)$ such that there is a  $\delta-$chain  $(y_{j})_{1}^{m}$ from $T(p)$ to $q$ with length $m$, where $2 \leq  m  \leq N$, satisfying
$$y_{1} \,=\,T(p)  \quad \quad \text{and} \quad \quad y_{m}\,=\ q.$$ 
Define a $\delta-$pseudo-orbit as follows:
\begin{eqnarray*}
x_{i} &=& T^{i}(p) \quad \quad \forall \, i \leqslant 0\\
x_{1} &=& y_{1}, \quad x_{2} \,=\, y_2, \quad \cdots, \quad x_{m-1} \,=\, y_{m-1}\\
x_{m+ \ell} &=& T^{\ell}(q) \quad \quad \forall\, \ell \geqslant  0.
\end{eqnarray*}
By the shadowing property, there exists $z \in X$ such that $d(T^{k}(z), x_{k})< \varepsilon$ for all $k \in \mathbb{Z}$. In particular, 
\begin{eqnarray*}
d(T^{i}(z), \,x_{i}) &=& d(T^{i}(z),\,T^{i}(p)) \,<\, \varepsilon \quad \quad \forall\, i \leqslant 0 \\
d(T^{m+ \ell}(z), \,x_{m+ \ell}) &=&  d(T^{m+ \ell}(z), \,T^{\ell}(q)) \, < \, \varepsilon \quad \quad \forall\, \ell  \geqslant 0.
\end{eqnarray*}
Therefore, $T$ has the barycenter property in $A$. \qed


\noindent $\textbf{(d)}$ Suppose that $T$ has the shadowing property. Thus, $\Omega(T) = CR(T)$ (cf. \cite[Theorem~3.1.2]{AH94}). Therefore, if there exists $A \subseteq X$ such that $T(A) = A$, $\Omega(T) = \overline{A}$ and $T$ has the barycenter property in $A$, then by item (b) we know that $T$ is topologically transitive. \qed


\noindent $\textbf{(e)}$ Assume that $T$ has the shadowing property and is accessible in $R(T)$. We start by proving the following auxiliar result which replaces, in this context, Lemma~\ref{lemma-fundamental}.

\begin{lemma}\label{accessible-lemma}
Let $(X,d)$ be a compact metric space and $T\colon X \to X$ be a homeomorphism. If $\Omega(T) \subseteq \overline{R(T)}$ and $T$ is accessible in $R(T)$, then $T$ is chain-transitive.
\end{lemma}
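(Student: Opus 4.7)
The plan is to imitate the proof of Lemma~\ref{lemma-fundamental}, replacing the role of the barycenter property by the existence of $su$-paths inside $R(T)$. Given $x,y\in X$ and $\varepsilon>0$, pick $a\in\omega(x)$ and $b\in\alpha(y)$; both lie in $\Omega(T)\subseteq\overline{R(T)}$, so we can choose $p,q\in R(T)$ with $d(a,p)<\varepsilon/3$ and $d(b,q)<\varepsilon/3$, together with integers $k',\ell\in\mathbb{N}$ such that $d(T^{k'}(x),a)<\varepsilon/3$ and $d(T^{-\ell}(y),b)<\varepsilon/3$. The $\varepsilon$-chain from $x$ to $y$ will be the concatenation of three pieces. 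The first is the orbit segment $x,T(x),\dots,T^{k'-1}(x),p$, whose sole nontrivial jump $d(T^{k'}(x),p)<\varepsilon$ follows from the triangle inequality through $a$. The third piece is $q,T(q),\dots,T^{k-1}(q),T^{-\ell}(y),T^{-\ell+1}(y),\dots,y$, where the forward recurrence $q\in\omega(q)$ allows us to select $k$ with $d(T^k(q),q)<\varepsilon/3$; then $d(T^k(q),T^{-\ell}(y))\le d(T^k(q),q)+d(q,b)+d(b,T^{-\ell}(y))<\varepsilon$.

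The heart of the argument is the middle piece, an $\varepsilon$-chain from $p$ to $q$, which is where the accessibility hypothesis enters. By accessibility in $R(T)$ there exists an $su$-path $p=z_0,z_1,\dots,z_m=q$ with each $z_i\in R(T)$, and it suffices to build an $\varepsilon$-chain from $z_{j-1}$ to $z_j$ for every $j\in\{1,\dots,m\}$ and then concatenate. When $z_j\in W^s_T(z_{j-1})$, I choose $N$ large enough that $d(T^{N+1}(z_{j-1}),T^{N+1}(z_j))<\varepsilon$ and then, using $z_j\in\omega(z_j)$, pick $M>N$ with $d(T^{M+1}(z_j),z_j)<\varepsilon$; the chain
$$
z_{j-1},\,T(z_{j-1}),\,\dots,\,T^N(z_{j-1}),\,T^{N+1}(z_j),\,T^{N+2}(z_j),\,\dots,\,T^M(z_j),\,z_j
$$
has all jumps below $\varepsilon$.

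The main obstacle is the unstable case $z_j\in W^u_T(z_{j-1})$, where the convergence $d(T^{-n}(z_{j-1}),T^{-n}(z_j))\to 0$ is visible only in the past and cannot be transported forward: since $T$ is not an isometry, a recurrence bound for $z_{j-1}$ at itself gives no information about $T^{-N}(z_{j-1})$. The plan is to apply recurrence at the shifted point directly. First fix $N$ with $d(T^{-N}(z_{j-1}),T^{-N}(z_j))<\varepsilon/2$; then, using that $T^{-N}(z_{j-1})\in R(T)$ by the $T$-invariance of $R(T)$, choose $L\ge N+1$ with $d(T^{L-N}(z_{j-1}),T^{-N}(z_{j-1}))<\varepsilon/2$. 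Setting $K=L-N-1$, the segment
$$
z_{j-1},\,T(z_{j-1}),\,\dots,\,T^K(z_{j-1}),\,T^{-N}(z_j),\,T^{-N+1}(z_j),\,\dots,\,T^{-1}(z_j),\,z_j
$$
has a single nontrivial jump, bounded by $d(T^{K+1}(z_{j-1}),T^{-N}(z_{j-1}))+d(T^{-N}(z_{j-1}),T^{-N}(z_j))<\varepsilon$ via the triangle inequality. Concatenating the three pieces yields the desired $\varepsilon$-chain from $x$ to $y$, so $T$ is chain-transitive.
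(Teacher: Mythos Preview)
Your proof is correct and follows the same overall strategy as the paper: reach recurrent points $p,q$ near $\omega(x)$ and $\alpha(y)$, connect them by an $su$-path inside $R(T)$, and then use recurrence to turn each link $z_{j-1}\to z_j$ of the path into an $\varepsilon$-chain. The implementation differs in two places. First, the paper unifies the stable and unstable cases through a single observation (its Claim~3): for every $j$ one has $\omega(z_{j-1})\cap\alpha(z_j)\neq\emptyset$, and a point $v_j$ in this intersection serves as a bridge, with the chain going forward along the orbit of $z_{j-1}$ to near $v_j$ and then forward along the orbit of $z_j$ from near $T(v_j)$. You instead split into two explicit constructions, using forward recurrence of $z_j$ in the stable case and forward recurrence of $T^{-N}(z_{j-1})$ (via $T$-invariance of $R(T)$) in the unstable case; this is more hands-on but equally valid. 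Second, the paper takes the $su$-path from $p$ to $T^{-1}(q)$ rather than to $q$, which lets the final jump land directly at $T^{-\ell}(y)$ without the extra recurrence loop $q,T(q),\dots,T^{k-1}(q)$ that you insert. Both devices work; the paper's are slightly more economical, while yours are perhaps easier to discover.
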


\begin{proof}
Take $x,y \in X$ and fix $\varepsilon>0$. We are looking for an $\varepsilon-$chain from $x$ to $y$, that is, a finite set of points in $X$, say $x = x_{0},\, x_{1}, \cdots, x_{N} = y$, such that 
$$
d(T(x_{i}), x_{i+1}) \,<\, \varepsilon \quad \quad \forall \, i \in \{0,1,2,\cdots, N-1\}.
$$
Given $a \in \omega(x) \subseteq  \Omega(T)$  and $b \in  \alpha(y)\subseteq  \Omega(T)$, there exist $k, \ell \in \mathbb{N}$ such that 
\begin{equation}\label{eq:kell-1}
d(T^{k}(x), a) \,<\,\frac{\varepsilon}{2} \quad \quad \text{and} \quad \quad  d(T^{-\ell}(y), b) \,<\,\frac{\varepsilon}{2}.
\end{equation}
Since $\Omega(T) \subseteq \overline{R(T)}$ and $a,b \in \Omega(f)$, there are $p, \,q \in R(T)$ such that 
\begin{equation}\label{eq:close-1}
d(p,a)\, \,< \frac{\varepsilon}{2} \quad \quad \text{and} \quad \quad  d(q,b) \,<\, \frac{\varepsilon}{2}.
\end{equation} 
Thus,
\begin{eqnarray}\label{eq:pq}
d(T^{k}(x),p) &\leqslant& d(T^{k}(x),a)+ d(a,p) \,<\,\frac{\varepsilon}{2} + \frac{\varepsilon}{2} \,=\, \varepsilon \nonumber\\
d(T^{-\ell}(y), q) &\leqslant & d(T^{-\ell}(y), b)  + d(q, b) \,< \,\frac{\varepsilon}{2} + \frac{\varepsilon}{2} \,=\, \varepsilon. 
\end{eqnarray}

\noindent The next two assertions are straightforward:

\noindent \textbf{Claim 1}: For every $\xi_1, \xi_2 \in X$, if $\xi \in  \alpha(\xi_1) \cap \omega(\xi_2)$ then $T^{r}(\xi) \in  \alpha(T^{s}(\xi_1)) \cap \omega(T^{s}(\xi_2))$ for all $r, s \in \mathbb{Z}$. 

\noindent \textbf{Claim 2}: If $\gamma \in R(T)$, then $T^{-1}(\gamma) \in R(T)$. 

So, as $q \in R(T)$, then $T^{-1}(q) \in R(T)$. 
Since $T$ is accessible in $R(T)$, there exists a $su-$path in $R(T)$ from $p$ to $T^{-1}(q)$, namely a finite set of points  $z_{0}, \cdots, z_{m} \in R(T)$ such that $z_{0}=p$, $z_{m}=T^{-1}(q)$ and $z_{i} \in W^{s}(z_{i-1}) \cup W^{u}(z_{i-1})$ for $ i \in \{1,\cdots,m\}.$

\noindent \textbf{Claim 3}: $\omega(z_{i-1}) \cap \alpha(z_{i}) \neq \emptyset$ for every $i \in \{1,\cdots,m\}.$

\begin{proof}
Fix $i \in \{1,\cdots,m\}$. One has $z_{i} \in W^{s}(z_{i-1}) \cup W^{u}(z_{i-1})$. If $z_{i} \in W^{s}(z_{i-1})$,  then $\omega(z_{i}) = \omega(z_{i-1})$. Moreover, as $ z_{i} \in R(T)$, then $\alpha(z_{i}) \cap \omega(z_{i}) \neq \emptyset $. Hence, $\omega(z_{i-1}) \cap \alpha(z_{i}) \neq \emptyset$.  
If, otherwise, $z_{i} \in W^{u}(z_{i-1})$, then $\alpha(z_{i}) = \alpha(z_{i-1})$. Since $ z_{i-1} \in R(T)$, we conclude that $\alpha(z_{i-1}) \cap \omega(z_{i-1}) \neq \emptyset $. So $\omega(z_{i-1}) \cap \alpha(z_{i}) \neq \emptyset$. \end{proof}

Let us now construct an $\varepsilon-$chain joining $x$ to $y$. By Claim 3, given $ i \in \{1,\cdots,m\}$ there exists $v_{i} \in \omega(z_{i-1}) \cap \alpha(z_{i})$. By Claim 1, $T(v_{i})  \in \omega(z_{i-1}) \cap \alpha(z_{i})$ as well. Therefore, for every $ i \in \{1,\cdots,m\}$, there are $n_{i},m_{i} \in \mathbb{N}$ such that 
\begin{equation}\label{eq:zv}
d(T^{n_{i}}(z_{i-1}), \,v_{i} ) \,<\, \varepsilon \quad \quad \text{and} \quad \quad d(T^{-m_{i}}(z_{i}), \,T(v_{i}) ) \,<\, \varepsilon.
\end{equation}
Define the following collection of points in $X$:
\begin{align*}
x_0 &= x, \,\, x_{1} = T(x), \,\,\cdots,\,\, x_{k-1} =  T^{k-1} (x) \\
x_{k} &= p = z_{0}, \,\, x_{k+1} = T(z_{0}), \,\,\cdots, \,\, x_{k + n_{1} - 1} =  T^{n_{1} - 1}(z_{0})\\
x_{k+n_{1} } &= v_{1}\\
x_{k + n_{1} +1} &= T^{-m_{1}}(z_{1}), \,\, \cdots, \,\ x_{k +n_{1} +1 + m_{1}} = z_{1}\\
x_{k +n_{1} +1 + m_{1}+1} &= T(z_{1}), \,\, \cdots, \,\, x_{\gamma +n_{1} +1 + m_{1}+n_{2} -1} = T^{n_{2}-1}(z_{1})\\
x_{k +n_{1} +1 + m_{1}+n_{2}} &= v_{2}
\end{align*}
\begin{align*}
x_{k+n_{1} +1 + m_{1}+n_{2} +1 } &= T^{-m_{2}}(z_{2}), \,\,\cdots, \,\, x_{k +n_{1} +1 + m_{1}+n_{2} +1 + m_{2} } = x_{k + \sum_{i=1}^{2 } (n_{i} +1 + m_{i}) } = z_{2}\\
x_{k + j+ \sum_{i=1}^{2 } (n_{i} +1 + m_{i}) } &= T^{j}(z_{2})\quad \quad  \forall\, j \in \{0,\cdots , n_{3} -1 \}\\
x_{k  + n_{3}  + \sum_{i=1}^{2 } (n_{i} +1 + m_{i}) } &= v_{3}\\
x_{k + n_{3} +1+j + \sum_{i=1}^{2 } (n_{i} +1 + m_{i}) } &= T^{-m_{3} +j}(z_{3}) \quad \quad \forall\, j \in \{0,\cdots , m_{3}  \}\\
x_{k + \sum_{i=1}^{3 } (n_{i} +1 + m_{i}) } &= z_{3}\\
&\,\,\vdots\\
x_{k + \sum_{i=1}^{m } (n_{i} +1 + m_{i}) } &= z_{m} = T^{-1}(q)\\
x_{k +1+j+ \sum_{i=1}^{m } (n_{i} +1 + m_{i}) } & = T^{-\ell + j}(y) \quad \quad  \forall\, j \in \{0,\cdots , \ell\}.
\end{align*}

\noindent Due to the inequalities \eqref{eq:close-1},  \eqref{eq:pq} and \eqref{eq:zv}, the points 
$(x_{i})_{i=0}^{k+1+\ell+ \sum_{i=1}^{m} (n_{i} +1 + m_{i})}$ build an $\varepsilon-$chain linking $x$ to $y$. So, $T$ is chain-transitive.
\end{proof}

We may now finish the proof of Theorem~\ref{maintheo-weak-hmp}(e). From \cite[Corollary~2.3]{CK96}, since $T$ has the shadowing property we know that $\overline{R(T)} = \Omega(T)$. On the other hand, by Lemma \ref{accessible-lemma}, $T$ is chain-transitive. As, by assumption, $T$ also has the shadowing property, then $T$ is one-sided topologically transitive due to \cite[Lemma~2.1]{L12}. Thus, $T$ is topologically transitive. \qed


\noindent $\textbf{(f)}$ Assume that $T$ has the shadowing property and the $su-$intersecting property in $X$. By Lemma~\ref{lemma-iterations-hyperbolic-meet}, we know that, for every positive integer $k$, the homeomorphism $T^{k}$ has the $su-$intersecting property in $X$. Moreover, according to Lemma~\ref{lemma-chain-transitive}, $T^{k}$ is chain-transitive for every positive integer $k$. Besides, as $T$ has the shadowing property then $T^{k}$ also has the shadowing property for every positive integer $k$ (cf. \cite[Theorem~2.3.3]{AH94}). Since the shadowing property together with the chain-transitivity imply topological transitivity, we conclude that $T^{k}$ is topologically transitive for every positive integer $k$. Finally, since $T$ has the shadowing property and $T^{k}$ is topologically transitive for every positive integer $k$, then $T$ is topologically mixing by \cite[Theorem~45]{KLO16}. \qed


\section{Proof of Theorem~\ref{maintheo-Homeo-hyperbolicmeet}}\label{se:proof-thmB}

Theorem~\ref{maintheo-Homeo-hyperbolicmeet} is a particular instance of more general results that we shall prove in this section. The next one comprehensibly contains item (a) of Theorem~\ref{maintheo-Homeo-hyperbolicmeet}.

\begin{proposition}\label{theo-Homeo-hyperbolicmeet}
Let $(X,d)$ be a compact metric space without isolated points and $T\colon X \to X$ be an expansive homeomorphism which satisfies the shadowing property. Then the following assertions are equivalent:
\begin{itemize}
\item[$(i)$] $T$ has the two-sided limit shadowing property.

\item[$(2i)$] $T$ has the $su-$intersecting property in $X$.

\item[$(3i)$] $T$ has the $su-$intersecting property in $\per(T)$.

\item[$(4i)$] $T$ has the specification property.
\end{itemize}
\end{proposition}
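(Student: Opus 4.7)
My plan is to close the cycle $(i) \Rightarrow (2i) \Rightarrow (3i) \Rightarrow (4i) \Rightarrow (i)$. The first two implications are essentially given: $(i) \Rightarrow (2i)$ is \cite[Lemma~2.4]{C15}, since two-sided limit shadowing forces the $su$-intersecting property on the whole space, and $(2i) \Rightarrow (3i)$ is immediate from $\per(T) \subseteq X$.

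The bulk of the work is $(3i) \Rightarrow (4i)$. The strategy is to push $(3i)$, together with expansiveness and shadowing, up to topological mixing on $X$, and then invoke the classical result that an expansive homeomorphism with shadowing which is topologically mixing on its phase space enjoys the specification property. I will produce the mixing in three stages. First, using the Aoki--Hiraide spectral decomposition $\Omega(T) = B_1 \cup \cdots \cup B_k$, I run the attractor/repeller argument sketched in the introduction for Anosov diffeomorphisms: picking periodic points $p$ in a repeller $B_r$ and $q$ in an attractor $B_a$, property $(3i)$ forces $W^s(p)\cap W^u(q) \subseteq B_r \cap B_a \neq \emptyset$, so the disjointness of basic sets collapses to $k=1$. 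Second, I rerun the same argument on the iterate $T^{n_1}$ associated with the cyclic sub-decomposition $B_1 = D_0 \cup \cdots \cup D_{n_1-1}$; this map is still expansive, still has shadowing by \cite[Theorem~2.3.3]{AH94}, and inherits the $su$-intersecting property on $\per(T) \subseteq \per(T^{n_1})$ by Lemma~\ref{lemma-iterations-hyperbolic-meet}. Since the $D_i$'s are disjoint basic sets for $T^{n_1}$ on $\Omega(T^{n_1}) = \Omega(T)$, the attractor/repeller argument applied to $T^{n_1}$ forces $n_1 = 1$, so $T$ is topologically mixing on $\Omega(T)$. Third, to upgrade mixing on $\Omega(T)$ to mixing on $X$, I invoke the expansive equivalence $su$-intersecting on $\per(T)$ $\Leftrightarrow$ barycenter on $\per(T)$ (from the discussion following Definition~\ref{BP-definition}, which rests on \cite[Lemma~1]{M79}) and apply Theorem~\ref{maintheo-weak-hmp}(b) with $A = \per(T)$, noting that shadowing gives $\Omega(T) = CR(T)$ and that periodic points are dense in $\Omega(T)$ in this setting. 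The conclusion is topological transitivity on $X$, which combined with the absence of isolated points forces $\Omega(T) = X$. Hence $T$ is topologically mixing on $X$ and specification follows.

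For $(4i) \Rightarrow (i)$: given a two-sided limit pseudo-orbit $(x_n)_{n\in\mathbb{Z}}$ and a sequence $\varepsilon_k \downarrow 0$, I pick $N_k \uparrow \infty$ so that outside $[-N_k, N_k]$ the pseudo-orbit is accurate enough to be handled by the shadowing encoded in specification; I then use the specification property to find $y_k \in X$ that $\varepsilon_k$-traces the segment of $(x_n)$ over $[-N_k, N_k]$, and use expansiveness to extract a subsequential limit $y$ of $(y_k)$ that two-sided limit shadows $(x_n)$.

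The main obstacle will be the $(3i) \Rightarrow (4i)$ step, specifically the passage from topological mixing on $\Omega(T)$ to topological mixing on the whole space $X$. Routing through Theorem~\ref{maintheo-weak-hmp}(b) via the expansive equivalence between the $su$-intersecting and the barycenter properties on periodic points is the tidy move that bypasses a hand analysis of possible wandering orbits.
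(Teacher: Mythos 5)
Your proof is correct, and for the main implication $(3i)\Rightarrow(4i)$ it takes a genuinely different route from the paper's. The paper bypasses the spectral decomposition entirely: it proves Lemma~\ref{lemma-fundamental-corollary}, which shows directly (using $\Omega(T)=\overline{\per(T)}$ and the bridge ``$su$-intersecting $\Rightarrow$ barycenter'' to invoke Lemma~\ref{lemma-fundamental}) that $T^k$ is chain-transitive for every $k$, then combines this with the fact that each $T^k$ inherits shadowing and that chain-transitivity plus shadowing gives transitivity, and finally cites \cite[Theorem~45]{KLO16} to pass from ``$T^k$ transitive for all $k$'' to specification. Your route instead runs the attractor/repeller argument from the introduction on the Aoki--Hiraide spectral decomposition, once to collapse the number of basic sets to one, and a second time on the cyclic factors of the power $T^{n_1}$ to kill the period, arriving at mixing on $\Omega(T)$; you then route through Theorem~\ref{maintheo-weak-hmp}(b) (via the same expansive ``$su$-intersecting $\Leftrightarrow$ barycenter'' bridge, which both proofs ultimately rely on) to get transitivity on $X$, hence $\Omega(T)=X$, and close with the classical fact that a mixing expansive shadowing homeomorphism has specification. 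Both are valid; the paper's argument is shorter and avoids having to establish that the repeller and attractor picked in the filtration are distinct basic sets (a point your sketch glosses over but which is easily supplied since any filtration with $k\geq 2$ components has distinct top and bottom strata), while your argument makes the geometric mechanism more transparent. For $(4i)\Rightarrow(i)$ the paper simply cites \cite[Lemma~2.2]{C15}; your compactness-plus-expansiveness sketch is the standard proof of that lemma. The remaining two implications coincide with the paper's.
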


\begin{proof}

We will show that  $(i) \Rightarrow (2i) \Rightarrow (3i) \Rightarrow (4i) \Rightarrow (i)$.
\medskip

\noindent $(i) \Rightarrow (2i)$  This is \cite[Lemma~2.4]{C15}.
\medskip

\noindent $(2i) \Rightarrow (3i)$ This is clear.
\medskip

\noindent $(3i) \Rightarrow (4i)$ Since $T$ is expansive and has the shadowing property, one has $\overline{\per (T)} = \Omega(T)$ according to \cite[Theorem~3.1.8]{AH94}.

\begin{lemma}\label{lemma-fundamental-corollary}
Let $(X,d)$ be a compact metric space, $T \colon X\to X$ be homeomorphism. If $\Omega(T) = \overline{\per(T)}$ and $T$ has the $su-$intersecting property in $\per(T)$, then $T^{k}$ is chain-transitive for every positive integer $k$.
\end{lemma}

\begin{proof}

The case $k=1$ was established in Lemma~\ref{lemma-fundamental}. Fix a positive integer $k>1$. Since $T$ has the $su-$intersecting property in $\per(T)$, by Lemma~\ref{lemma-iterations-hyperbolic-meet} we know that $T^{k}$ has the $su-$intersecting property in $\per(T)$. Moreover, $\per(T^{k}) = \per(T)$, hence $T^{k}$ also has the $su-$intersecting property in $\per(T^k)$. Since 
$$\overline{\per(T)} \,=\, \overline{\per(T^{k})} \,\subseteq\, \Omega(T^{k}) \,\subseteq\, \Omega(T) \,=\, \overline{\per(T)}$$
we get $\overline{\per(T^{k})} = \Omega(T^{k})$. Thus, we may apply Lemma~\ref{lemma-iterations-hyperbolic-meet} to $T^{k}$ and this way conclude that $T^{k}$ is chain-transitive. 
\end{proof}

Let us resume the proof of the implication $(3i) \Rightarrow (4i)$. Suppose that the homeomorphism $T$ has the $su-$intersecting property in $\per (T)$. By Lemma~\ref{lemma-fundamental-corollary}, $T^{k}$ is chain-transitive for every positive integer $k$. Moreover, as $T$ has the shadowing property, $T^k$ has this property as well for every positive integer $k$ (cf. \cite[Theorem~2.3.3]{AH94}). In addition, a chain-transitive homeomorphism with the shadowing property is topologically transitive. So, $T^k$ is topologically transitive for every positive integer $k$. Using that one-sided topological transitivity is equivalent to topological transitivity whenever the compact metric space does not contain isolated points (cf. \cite[Theorem~1.4]{AC12}), we conclude that $T^k$ is one-sided topologically transitive for every positive integer $k$. Therefore, $T$ has the specification property by \cite[Theorem~45]{KLO16}.

\noindent $(4i) \Rightarrow (i)$  This is \cite[Lemma~2.2]{C15}.
\end{proof}

We draw item (b) of Theorem~\ref{maintheo-Homeo-hyperbolicmeet} from the next result.

\begin{proposition} Let $(X,d)$ be a compact connected metric space 
and $T\colon X \to X$ be an expansive homeomorphism which satisfies the shadowing property. Then the following conditions are equivalent:
\begin{itemize}
\item[$(i)$] $T$ has the $su-$intersecting property in $X$.

\item[$(2i)$] $T$ has the $su-$intersecting property in $\per(T)$.

\item[$(3i)$] $T$ has the weak $su-$intersecting property in $\per(T)$.

\item[$(4i)$] $T$ has the barycenter property in $\per(T)$.
\end{itemize}
\end{proposition}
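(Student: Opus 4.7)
The plan is to establish the cycle $(i) \Rightarrow (2i) \Rightarrow (3i) \Rightarrow (4i) \Rightarrow (i)$. The first two implications are immediate from the definitions: $(i) \Rightarrow (2i)$ since $\per(T) \subseteq X$, and $(2i) \Rightarrow (3i)$ from the inclusions $W^{s}_{T}(p) \subseteq W^{s}_{T}(\overline{\mathcal{O}_T(p)})$ and $W^{u}_{T}(q) \subseteq W^{u}_{T}(\overline{\mathcal{O}_T(q)})$ valid for every $p,q \in \per(T)$. The implication $(3i) \Rightarrow (4i)$ is \cite[Lemma~2.2]{ST12}, already recorded in the introduction; it requires no hypothesis beyond those in the proposition.

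The content lies in the closing step $(4i) \Rightarrow (i)$, which I would organise in three moves. First, I would apply Theorem~\ref{maintheo-weak-hmp}(d) with the choice $A = \per(T)$: this set is $T$-invariant, expansivity together with shadowing gives $\overline{\per(T)} = \Omega(T)$ by \cite[Theorem~3.1.8]{AH94}, and the barycenter property in $A$ is exactly $(4i)$. Hence $T$ is topologically transitive. Since a connected space with more than one point has no isolated points, \cite[Theorem~1.4]{AC12} upgrades transitivity to one-sided transitivity, which forces $\Omega(T) = X$.

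Second, I would promote transitivity to topological mixing by invoking the Smale-type spectral decomposition available for expansive homeomorphisms with the shadowing property. This writes $\Omega(T) = X$ as a finite disjoint union of closed, $T$-invariant basic sets on which $T$ is transitive, and further decomposes each basic set into finitely many closed pieces cyclically permuted by $T$ on which an appropriate power of $T$ is topologically mixing. Because $X$ is connected, neither of these two levels of decomposition can be non-trivial, so $T$ itself is topologically mixing on $X$; in particular every iterate $T^k$ is topologically transitive. Then \cite[Theorem~45]{KLO16}, already used in the proof of Theorem~\ref{maintheo-weak-hmp}(f), delivers the specification property for $T$.

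Third, Proposition~\ref{theo-Homeo-hyperbolicmeet} established above asserts that, under expansivity and shadowing, the specification property is equivalent to the $su$-intersecting property in $X$, so $(i)$ follows and the cycle closes. I expect the main obstacle to be the second step: one must combine the connectedness of $X$ with the full hyperbolic-type structure guaranteed by expansivity and shadowing to collapse simultaneously the basic-set decomposition and its cyclic refinement; the remaining steps are either definitional or direct applications of results already proved in the paper and its references.
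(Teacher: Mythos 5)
Your proof is correct and follows the same overall skeleton as the paper's: the implications $(i)\Rightarrow(2i)\Rightarrow(3i)\Rightarrow(4i)$ are handled identically (the last via \cite[Lemma~2.2]{ST12}), and $(4i)\Rightarrow(i)$ is organised in both as transitivity (from Theorem~\ref{maintheo-weak-hmp} applied to $A=\per(T)$, using $\overline{\per(T)}=\Omega(T)$ from \cite[Theorem~3.1.8]{AH94}), then topological mixing, then specification via \cite[Theorem~45]{KLO16}, then the $su$-intersecting property in $X$ via Proposition~\ref{theo-Homeo-hyperbolicmeet}. The one point of divergence is the passage from transitivity to mixing. The paper simply cites \cite[Proposition~3]{C15TS}, which asserts that a topologically transitive homeomorphism with the shadowing property on a compact connected space is topologically mixing, with no use of expansivity; you re-derive this from the Smale-type spectral decomposition for expansive homeomorphisms with shadowing, collapsing both the basic-set level and its cyclic refinement by connectedness. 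Both arguments are valid. Your version is more self-contained but leans on expansivity at that step, which the cited result does not need; and, as a small economy, once transitivity is established $X$ is already a single basic set, so only the cyclic refinement genuinely requires the connectedness argument.
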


\begin{proof}
We reason this way: $(i) \Rightarrow (2i) \Rightarrow (3i) \Rightarrow (4i) \Rightarrow (i)$.

\noindent $(i) \Rightarrow (2i)$  This is clear
\medskip

\noindent $(2i) \Rightarrow (3i)$ This is clear.
\medskip

\noindent $(3i) \Rightarrow (4i)$ This is \cite[Lemma~2.2]{ST12}.
\medskip

\noindent $(4i) \Rightarrow (i)$ Since $T$ is expansive and has the shadowing property, then $\overline{\per (T)} = \Omega(T)$ by  \cite[Theorem~3.1.8]{AH94}. So, if the homeomorphism $T$ has the barycenter property in $\per(T)$ then, by Theorem \ref{maintheo-weak-hmp}, $T$ is topologically transitive. Since $T$ is topologically transitive, has the shadowing property and $X$ is connected, then $T$ is topologically mixing (cf. \cite[Proposition~3]{C15TS}). Therefore, $T$ has the specification property (cf. \cite[Theorem~45]{KLO16}). Hence, by Proposition~\ref{theo-Homeo-hyperbolicmeet}, $T$ has the $su-$intersecting property in $X$.
\end{proof}


\section{Proof of Theorem~\ref{theo-aux-complet}}\label{se:proof-thmC}

We will use the following scheme of implications: 

\footnotesize
\begin{tabular}{lllllllllllllllllllllll}
&  &  &  &  & & & & & & $(13)$ & $\Rightarrow$ & $(4)$ &  &  \\
&  &  &  &  & & & & & & $\,\,\,\Uparrow$ &  &  &  &  \\
$(1)$ & $\Rightarrow$ & $(2)$ & $\Rightarrow$ & $(3)$ & $\Rightarrow$ & $(4)$ & $\Rightarrow$ & $(5)$ & $\Rightarrow$ & 
$\,(6)$ & $\Rightarrow$ & $(7)$ & $\Rightarrow$ & $(8)$ 
& $\Rightarrow$ & $(9)$ & $\Rightarrow$ & $(10)$ &&   \\
&&&&&&&&&& $\,\,\,\Downarrow$ &&&&&&&& $\,\,\,\Downarrow$ &&&&\\
&&&&&&&&&& $(12)$ & $\Rightarrow$ & $(11)$ & $\Rightarrow$ & $(1)$ &&&& $(11)$&&&&
\end{tabular}
\normalsize

\medskip

\noindent $(1) \Rightarrow (2)$ This is \cite[Lemma~2.4]{C15}.

\noindent $(2) \Rightarrow (3)$ This is clear.

\noindent $(3) \Rightarrow (4)$ This is also immediate since $W^{s}_f(x) \cap W^{u}_f(y) \subseteq W^{s}_f \big(\mathcal{O}_f(x)\big) \cap W^{u}_f \big(\mathcal{O}_f(y)\big)$.
 
\noindent $(4) \Rightarrow (5)$  Suppose that $f$ has the weak $su-$intersecting property in $\per(f)$. According to \cite[Lemma~2.2]{ST12},  $f$ has the barycenter property in $\per(f)$. Since $f$ is Axiom $A$, one has $\Omega(f) \,=\, \overline{\per(f)} $.
   
\noindent $(5) \Rightarrow (6)$  As $f$ is $\Omega-$stable,  
by the $\Omega-$stability theorem \cite{Smale70} the chain-recurrent set of $f$, say $CR(f)$, is hyperbolic. Moreover, since $M$ has dimension $\geqslant 2$, by the Closing Lemma \cite{Pugh} one has 
$$\Omega(f) \,=\, \overline{\per(f)} \,=\, CR(f).$$
As, in addition, $f$ has the barycenter property in $A$, $f(A)=A$ and $\overline{A} = \Omega(f)$, we already know from Lemma~\ref{lemma-fundamental} that $f$ is chain-transitive. So $CR(f) = M$. Consequently, $\Omega(f) = M$.
Since $\Omega(f)$ is hyperbolic, we conclude that $M$ is hyperbolic, that is, $f$ is an Anosov diffeomorphism. Thus, $f$ has the shadowing property. Besides, as already mentioned, a chain-transitive homeomorphism with the shadowing property is topologically transitive. Hence $f$ is a topologically transitive Anosov diffeomorphism. 

\noindent $(6) \Rightarrow (7)$ Suppose that $f$ is a topologically transitive Anosov diffeomorphism. So $f$ is an expansive topologically transitive diffeomorphism that has the shadowing property. Since $M$ is connected, then $f$ is topologically mixing \cite[Proposition~3]{C15TS}. Therefore, by \cite[Theorem~45]{KLO16}, $f$ has the specification property.

\noindent $(7) \Rightarrow (8)$ This is a consequence of  \cite[Proposition~2.1]{PfSu07}. See also the remarks on \cite[page~4]{Thomp} and \cite[page~249]{KPR14}.

\noindent $(8) \Rightarrow (9)$ This is \cite[Theorem~3.5]{KPR14}.

\noindent $(9) \Rightarrow (10)$ This is \cite[Theorem~3.7]{KPR14}.

\noindent $(10) \Rightarrow (11)$ This is \cite[Lemma~3.1]{KPR14}.

\noindent $(11) \Rightarrow (1)$ Since $f$ is $\Omega-$stable and, in addition, chain-transitive, we conclude as previously  that $\Omega(f) = \overline{\per(f)} = CR(f) = M$ and $\Omega(f)$ is hyperbolic. So $f$ is a chain-transitive diffeomorphism satisfying the shadowing property, and therefore topologically transitive. By \cite[Corollary~2.3]{C15}, the topologically transitive Anosov diffeomorphism $f$ has the two-sided limit shadowing property.

\noindent $(6) \Rightarrow (12)$ Suppose that $f$ is a topologically transitive Anosov diffeomorphism. So $f$ has the  shadowing property. Then, by \cite[Theorem~A]{LO18}), one has $\overline{\mathcal{P}_{f}^{erg}(M)} = \mathcal{P}_{f}(M)$.

\noindent $(12) \Rightarrow (11)$ Suppose now that $f\colon M \to M$ is an $\Omega-$stable diffeomorphism such that $\mathcal{P}_{f}^{erg}(M)$ is dense in $\mathcal{P}_{f}(M)$. 

As explained before, by the $\Omega-$stability of $f$, one has $\Omega(f) = \overline{\per(f)} = CR(f)$ and $CR(f)$ is hyperbolic.  Besides, by definition, every periodic point of $f$ is an element of the measure center $\mathcal{M}(M,f)$. Moreover, due to  \cite[Lemma~5.2]{CCS24}, one has 
$$\mathcal{M}(M,f) \,\subseteq\, \Omega(f)$$
and so 
$$\Omega(f) \,=\, \overline{\per(f)} \,=\, CR(f) \,=\, \mathcal{M}(M,f).$$
Now, with the equalities $\overline{\mathcal{P}_{f}^{erg}(M)} = \mathcal{P}_{f}(M)$ and  $\mathcal{M}(M,f) = CR(f)$, we deduce from \cite[Lemma~5.8]{CCS24} that $f$ is topologically transitive. In particular, $f$ is chain-transitive.

\noindent $(6) \Rightarrow(13)$ Suppose that $f$ is a topologically transitive Anosov diffeomorphism. Then $f$ has the shadowing property. Therefore, $f$ has the gluing orbit property, as shown by the next lemma.

\begin{lemma}\label{lemma:sh+tr+connct-gluing}
Let $T\colon X \to X$ be a homeomorphism of a compact metric space $(X,d)$ without isolated points. If $T$ has the shadowing property and is topologically transitive, then $T$ has the gluing orbit property.
\end{lemma}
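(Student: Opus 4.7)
The plan is to build, for any finite family of orbit segments, a single $\delta$-pseudo-orbit by bridging consecutive segments with short chains, and then apply the shadowing property to extract the tracking point required by the gluing orbit property.

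Fix $\varepsilon > 0$. First I would invoke the shadowing property to choose $\delta = \delta(\varepsilon) > 0$ so that every $\delta$-pseudo-orbit is $\varepsilon$-shadowed. Since $X$ has no isolated points and $T$ is topologically transitive, Lemma~\ref{lemma:transitive-chain} applied with $\xi = \delta$ supplies a uniform integer $N = N(\delta)$ such that any pair of points in $X$ can be joined by a $\delta$-chain of length $L$ with $2 \leqslant L \leqslant N$. I would declare $N(\varepsilon) := N(\delta)$ to be the candidate gluing constant.

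Given $x_1, \ldots, x_k \in X$ and $n_1, \ldots, n_k \in \mathbb{N}$, for each $i \in \{1, \ldots, k-1\}$ I would apply the above to produce a $\delta$-chain $z^{(i)}_0 = T^{n_i}(x_i),\, z^{(i)}_1, \ldots, z^{(i)}_{p_i} = x_{i+1}$ of length $2 \leqslant p_i \leqslant N(\varepsilon)$. Setting $S_1 := 0$ and $S_i := n_1 + p_1 + \cdots + n_{i-1} + p_{i-1}$ for $2 \leqslant i \leqslant k$, I would define a bi-infinite sequence $(y_n)_{n \in \mathbb{Z}}$ by placing the orbit segment of $x_i$ at positions $S_i, S_i+1, \ldots, S_i + n_i$ (that is, $y_{S_i + j} = T^j(x_i)$ for $0 \leqslant j \leqslant n_i$), inserting $z^{(i)}_\ell$ at position $S_i + n_i + \ell$ for $1 \leqslant \ell \leqslant p_i - 1$ to bridge consecutive blocks, and extending by forward $T$-iterates beyond the last block and backward $T$-iterates below the first. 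A direct check using $T(z^{(i)}_\ell) \approx z^{(i)}_{\ell+1}$ within each bridge, together with $T(y_n) = y_{n+1}$ elsewhere, shows that $(y_n)_{n \in \mathbb{Z}}$ is a $\delta$-pseudo-orbit.

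The shadowing property then yields $x \in X$ with $d(T^n(x), y_n) < \varepsilon$ for every $n \in \mathbb{Z}$; restricting to $n = S_i + j$ with $0 \leqslant j \leqslant n_i$ gives exactly the gluing-orbit inequalities with gaps $p_1, \ldots, p_{k-1} \leqslant N(\varepsilon)$. The central point, which I expect to be the only non-routine step, is securing a bound on the $p_i$ that depends solely on $\varepsilon$ and not on the sample $(x_i, n_i)$; this uniformity is precisely what Lemma~\ref{lemma:transitive-chain} delivers, and it is there that the hypotheses of topological transitivity and absence of isolated points enter in an essential way.
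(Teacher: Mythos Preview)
Your proposal is correct and follows essentially the same route as the paper's proof: fix $\varepsilon$, choose $\delta$ via shadowing, invoke Lemma~\ref{lemma:transitive-chain} to obtain a uniform bound $N(\delta)$ on the length of $\delta$-chains between any two points, concatenate the given orbit segments with these bridging chains into a single bi-infinite $\delta$-pseudo-orbit, and then shadow. Your identification of the uniform chain-length bound from Lemma~\ref{lemma:transitive-chain} as the only non-routine ingredient is exactly right.
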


\begin{proof}
Fix $\varepsilon>0$.  Let $\delta > 0$ be the value provided by the shadowing property, so that every $\delta-$pseudo-orbit of $T$ is $\varepsilon-$shadowed in $X$. Take $k \in \mathbb{N}$, $y_1, y_2, \dots, y_k \in X$ and positive integers $n_1, \dots, n_k$.

Applying Lemma~\ref{lemma:transitive-chain} when $\xi = \delta$, we deduce that there exists $N = N(\delta)$ such that there are:
\begin{itemize}
\item a $\delta-$chain $(x_{1,2})_{0}^{N-1}$ from  $T^{n_{1}}(y_{1})$ to $y_{2}$ with length $p_{1}$ where  $2 \leq p_{1}\leq N$ and satisfying
$$(x_{1,2})_{0} \,=\,T^{n_{1}}(y_{1}) \quad \quad \text{and} \quad \quad (x_{1,2})_{p_{1}-1}\,=\,y_{2};$$
\item a $\delta-$chain $(x_{2,3})_{0}^{N-1}$ from  $T^{n_{2}}(y_{2})$ to $y_{3}$ with length $p_{2}$ where  $2 \leq p_{2}\leq N$ and 
$$(x_{2,3})_{0} \,=\,T^{n_{2}}(y_{2})  \quad \quad \text{and} \quad \quad  (x_{2,3})_{p_{2}-1}\,=\,y_{3};$$
\item a $\delta-$chain $(x_{3,4})_{0}^{N-1}$ from  $T^{n_{3}}(y_{3})$ to $y_{4}$ with length $p_{3}$ where  $2 \leq p_{3}\leq N$ with
$$(x_{3,4})_{0}\,=\,T^{n_{3}}(y_{3}) \quad \quad \text{and} \quad \quad (x_{3,4})_{p_{3}-1}\,=\,y_{4};$$
$\vdots$
\item a $\delta-$chain $(x_{k-1,k})_{0}^{N-1}$ from  $T^{n_{k-1}}(y_{k-1})$ to $y_{k}$ with length $p_{(k-1)}$, where
$$(x_{k-1,k})_{0}\,=\,T^{n_{k-1}}(y_{k-1}) \quad \quad \text{and} \quad \quad (x_{k-1,k})_{p_{(k-1)}-1}\,=\,y_{k}$$
and $2 \leq p_{(k-1)}\leq N$.
\end{itemize}
Define a $\delta-$pseudo-orbit as follows:
\begin{eqnarray*}
x_{\ell} &=& T^{\ell}(y_{1}) \quad \forall\, \ell < 0 \\
x_{0} &=& y_{1}, x_{1} \,=\, T^{1} (y_{1}), \cdots, x_{n_{1}} \,=\, T^{n_{1}}(y_{1}) \,=\, (x_{1,2})_{0}\\
x_{n_{1}+1} &=& (x_{1,2})_{1}, \cdots, x_{n_{1}+(p_{1}-1)} \,=\, (x_{1,2})_{p_{1}-1} \,=\, y_{2}\\
x_{n_{1}+(p_{1}-1) + 1} &=& T(y_{2}), \cdots,  x_{n_{1}+(p_{1}-1)+n_{2}} \,=\, T^{n_{2}}(y_{2}) \,=\,  (x_{2,3})_{0}\\
x_{n_{1}+(p_{1}-1)+n_{2}+1} &=&  (x_{2,3})_{1},\cdots, x_{n_{1}+(p_{1}-1)+n_{2} +(p_{2}-1)} \,=\,  (x_{2,3})_{p_{2}-1}\,=\,y_{3}\\
x_{n_{1}+(p_{1}-1)+n_{2} +(p_{2}-1) +1} &=& T(y_{3}),\cdots, x_{n_{1}+(p_{1}-1)+n_{2} +(p_{2}-1) +n_{3}} \,=\,T^{n_{3}}(y_{3}) \,=\, (x_{3,4})_{0}\\
x_{n_{1}+(p_{1}-1)+n_{2} +(p_{2}-1) +n_{3}+1} &=& (x_{3,4})_{1},\cdots, x_{n_{1}+(p_{1}-1)+n_{2} +(p_{2}-1) +n_{3}+(p_{3}-1)} \\
&=& (x_{3,4})_{p_{3}-1} \,=\,y_{4} \\
&\vdots & \\
x_{\left( 1+ \sum_{i=1}^{k-2} (n_{i} + (p_{i}-1)) \right)} &=& T(y_{k-1}),\cdots, x_{\left( n_{(k-1)}+ \sum_{i=1}^{k-2} (n_{i} + (p_{i}-1))\right) } \,=\,T^{n_{(k-1)}}(y_{k-1}) \\
&=& (x_{k-1,k})_{0}\\
x_{\left( 1+  n_{(k-1)}+ \sum_{i=1}^{k-2} (n_{i} + (p_{i}-1))\right)} &=& (x_{k-1,k})_{1},\cdots, x_{\left( (p_{(k-1)}-1)+  n_{(k-1)}+ \sum_{i=1}^{k-2} (n_{i} + (p_{i}-1))\right)} \\
&=& (x_{k-1,k})_{p_{(k-1)}-1} \,=\, y_{k}\\
x_{\left(  \ell +  \sum_{i=1}^{k-1} (n_{i} + (p_{i}-1))\right)} &=& T^{\ell}(y_{k}) \quad \forall \ell \geq 0.
\end{eqnarray*}
By the shadowing property, there exists $z \in X$ such that $d(T^{j}(z), x_{j})< \varepsilon$ for all $j \in \mathbb{Z}$. Hence, $T$ has the gluing orbit property.
\end{proof}

\noindent $(13)\Rightarrow (4)$  Suppose that $f$ has the gluing orbit property.

\begin{lemma}\label{lemma:gluing-bary}
Let $T\colon X \to X$ be a homeomorphism of a compact metric space $(X,d)$ without isolated points and $A$ be a nonempty subset of $X$. If $T$ has the gluing orbit property, then $T$ has the barycenter property in $A$.
\end{lemma}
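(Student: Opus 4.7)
The plan is to feed the gluing orbit property a two-point sample $\{x_1, x_2\}$ with $x_1 = T^{-n_1}(p)$ and $x_2 = q$, of segment lengths $n_1$ and $n_2$, and then to recognise the resulting tracking point, after a suitable forward shift, as the point $x_0$ required by the barycenter property.

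Fix $p, q \in A$ and $\varepsilon > 0$. Let $N = N(\varepsilon/2)$ be the constant provided by the gluing orbit property for tolerance $\varepsilon/2$; since this $N$ depends only on $\varepsilon$, we shall set $N(\varepsilon, p, q) := N$. Given $n_1, n_2 \in \mathbb{N}$, the gluing orbit property applied to $(x_1, x_2) = (T^{-n_1}(p), q)$ with lengths $(n_1, n_2)$ produces an integer $p_1 \in \{1, \ldots, N\}$ and a point $x \in X$ satisfying
\begin{align*}
d\big(T^j(x),\, T^{j-n_1}(p)\big) &\,\leqslant\, \varepsilon/2 \quad \text{for every } 0 \leqslant j \leqslant n_1,\\
d\big(T^{j+n_1+p_1}(x),\, T^j(q)\big) &\,\leqslant\, \varepsilon/2 \quad \text{for every } 0 \leqslant j \leqslant n_2.
\end{align*}
Set $x_0 := T^{n_1}(x) \in X$ and $m := p_1 \in [0, N]$. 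Re-indexing by $i = j - n_1$ converts the first inequality into $d(T^i(x_0), T^i(p)) \leqslant \varepsilon/2 < \varepsilon$ for $-n_1 \leqslant i \leqslant 0$, while the second inequality reads directly as $d(T^{i+m}(x_0), T^i(q)) \leqslant \varepsilon/2 < \varepsilon$ for $0 \leqslant i \leqslant n_2$. Hence the pair $(x_0, m)$ witnesses the barycenter property for $(p,q)$ at tolerance $\varepsilon$.

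The only genuine subtlety is the indexing mismatch between the two notions: the gluing orbit property tracks \emph{forward} segments of orbits issued from each sample point, whereas the barycenter property locates the orbit of $p$ on the \emph{negative} side of the barycenter $x_0$. The choice $x_1 = T^{-n_1}(p)$, coupled with the translation $x_0 = T^{n_1}(x)$, is precisely what reconciles these conventions; once it is in place, the verification reduces to a routine substitution. In particular, the absence of isolated points plays no role here, and the resulting constant $N$ turns out to be independent of the pair $(p,q) \in A \times A$.
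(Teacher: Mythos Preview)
Your proof is correct and follows essentially the same strategy as the paper's: both feed the gluing orbit property the pair $(T^{-\text{length}}(p),\,q)$ and then translate the tracking point forward to obtain the barycenter witness. Your version is in fact tidier: you use the given $n_1,n_2$ directly as segment lengths, whereas the paper introduces an auxiliary parameter $\ell$ and works with segments of length $\ell N$ (implicitly choosing $\ell$ so large that $\ell N\geqslant\max(n_1,n_2)$); and your use of $\varepsilon/2$ cleanly secures the strict inequality demanded by Definition~\ref{BP-definition}, which the paper glosses over.
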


\begin{proof}
Consider $p, q \in A$ and $\varepsilon >0$. By the gluing orbit property, there exists $N=N(\vep)\in \mathbb{N}$ such that, for every $x, y \in X$ and any $n_1,n_2 \in \mathbb{N}$, there is a positive integer $t_1 \leqslant N(\vep)$ and $z \in X$ satisfying
\begin{eqnarray*}
d\big(T^j(z),\,T^j(x)\big) &\leqslant& \varepsilon \quad \quad \forall \, 0\leqslant j \leqslant n_1 \\
d\big(T^{k+n_1 + t_1}(z),\, T^k(y)\big) & \leqslant& \varepsilon \quad \quad \forall \,0\leqslant k \leqslant n_2.
\end{eqnarray*}
For each $\ell \in \mathbb{N}$, consider 
$$x_{\ell} \,=\, T^{-\ell N}(p), \quad y_{\ell} \,=\, q, \quad  n_{1,\ell} \,=\, \ell N, \quad n_{2,\ell} \,=\, \ell N.$$
Then there exist a positive integer $r_{\ell} \leqslant N(\vep)$ and $z_{\ell} \in X$ such that
\begin{eqnarray*}
d\big(T^j(z_{\ell}),\,T^{j-\ell N} (p)\big) &=& d\big(T^j(z_{\ell}),\,T^j(x_{\ell})\big) \, \leqslant \, \varepsilon \quad \quad \forall\, 0\leqslant j \leqslant n_{1,\ell} \, = \,\ell N\\
d\big(T^{k + \ell N  + r_{\ell}}(z_{\ell}), \, T^k(q)\big) &=& d\big(T^{k+n_{1,\ell}+r_{\ell}}(z_{\ell}),\, T^k(y_{\ell})\big)
\, \leqslant \, \varepsilon \quad \quad \forall\, 0\leqslant k\leqslant n_{2,\ell}\,=\,\ell N.
\end{eqnarray*}
Consider $m = j-\ell N$. Thus,
\begin{eqnarray*}
d\big(T^{m}(T^{\ell N}(z_{\ell})),\,T^{m} (p)\big) &=& d\big(T^{m+ \ell N}(z_{\ell}),\,T^{m} (p)\big)\,  \leqslant \, \varepsilon \quad \quad \forall \,-\ell N \leqslant m \leqslant 0 \\
d\big(T^{k + \ell N  + r_{\ell} -\ell N }( T^{\ell N }(z_{\ell})),\, T^k(q)\big) &=& d\big(T^{k + \ell N  + r_{\ell}}(z_{\ell}),\, T^k(q)\big) \,  \leqslant\, \varepsilon \quad \quad \forall\, 0\leqslant k\leqslant \ell N.
\end{eqnarray*}
Consequently,
\begin{eqnarray*}
d\big(T^{m}( T^{\ell N}(z_{\ell})),\,T^{m} (p) \big) &\leqslant & \varepsilon \quad \quad \forall \, -\ell N \leqslant m \leqslant 0\\
d\big(T^{k  + r_{\ell} }( T^{\ell N }(z_{\ell})),\, T^k(q)\big) &  \leqslant & \varepsilon \quad \quad 0\leqslant k\leqslant \ell N.
\end{eqnarray*}
Setting $w_{\ell} = T^{\ell N}(z_{\ell})$, we obtain 
\begin{eqnarray*}
d\big(T^{m}( w_{\ell}),\,T^{m} (p)\big) &\leqslant & \varepsilon \quad \quad -\ell N \leqslant m \leqslant 0 \\
d\big(T^{k  + r_{\ell} }( w_{\ell} ),\, T^k(q)\big) &  \leqslant &\varepsilon \quad \quad \forall\, 0\leqslant k\leqslant \ell N.
\end{eqnarray*}
Since  $r_{\ell} \leqslant N(\vep)$, we conclude that $T$ has the barycenter property in $A$.
\end{proof}

By Lemma~\ref{lemma:gluing-bary}, $f$ has the barycenter property in $A$ for every $\emptyset \neq A \subseteq M$. Moreover, as $f$ is $\Omega-$stable, one has $\emptyset \neq \per(f) = \per_h(f)$. Therefore, $f$ has the barycenter property in $\per_h(f)$, which, in this context, is equivalent to say that $f$ has the weak $su-$intersecting property in $\per(f)$. \qed

\begin{remark}
In \cite[Proposition~5.7]{CCS24}, it was proved that, assuming the shadowing property, then the topological transitivity is equivalent to the denseness of the ergodic measures in the space of invariant probability measures. 
\end{remark}

\begin{remark}
The proof of Theorem~\ref{theo-aux-complet} also ensures, without the assumption of $\Omega-$stability, that
\begin{eqnarray}\label{eq001}
\text{two-sided limit shadowing} \quad &\Rightarrow& \quad \text{$su-$intersecting property in $M$.} 
\end{eqnarray}
Moreover (see \cite[Lemma~2.1]{SSY10}),
$$\text{specification property}  \quad \Rightarrow \quad \text{weak $su-$intersecting property in $\per_{h}(f)$}.$$ 
\end{remark}


\section{Applications}\label{se:appl}

Let $M$ be a smooth closed Riemannian manifold and $\Diff(M)$ stand for the set of all $C^1$ diffeomorphisms of $M$, endowed with the $C^{1}$ topology. On the next pages we discuss a few consequences of our main results. They comprise the description of both the $C^1$ interior and a $C^1$ generic subset of the space of diffeomorphisms which satisfy the barycenter property in the set of periodic points; and the proof that, $C^1$ generically in the space of volume-preserving diffeomorphisms, to satisfy the barycenter property in the set of hyperbolic periodic points is equivalent to be a transitive Anosov diffeomorphism.

\subsection{Application 1}

The following is a consequence of Lemma \ref{lemma-fundamental} together with the $C^1$ generic properties within $\Diff (M)$ established by C. Bonatti and S. Crovisier in \cite{BC04} and F. Abdenur and S. Crovisier in \cite{AbCr12}.

\begin{corollary}\label{corollary:barycenter-generic-mixing}
There is a $C^1$ Baire generic set $\mathcal{R} \subset \Diff(M)$ such that, if $f\in \mathcal{R}$ and there exists $A \subseteq M$ such that  $f(A)=A$, $\Omega(f) = \overline{A}$ and $f$ has the barycenter property in $A$, then $f$ is topologically mixing.
\end{corollary}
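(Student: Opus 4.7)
The plan is to combine the chain-transitivity consequence of Lemma~\ref{lemma-fundamental} with two $C^1$ generic results available off the shelf. First, recall from \cite{BC04} that there is a $C^1$ residual set $\mathcal{R}_1\subset \Diff(M)$ on which $\Omega(f)=CR(f)$ and, moreover, every chain recurrence class of $f$ is topologically transitive. Second, from \cite{AbCr12} there is a $C^1$ residual set $\mathcal{R}_2\subset \Diff(M)$ with the property that every topologically transitive chain recurrence class of $f\in\mathcal{R}_2$ is in fact topologically mixing. I would set $\mathcal{R}:=\mathcal{R}_1\cap\mathcal{R}_2$; as the intersection of two residual sets it is itself $C^1$ Baire generic in $\Diff(M)$.

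Now suppose $f\in\mathcal{R}$ and that $A\subseteq M$ satisfies $f(A)=A$, $\Omega(f)=\overline{A}$ and $f$ has the barycenter property in $A$. The first step is to apply Lemma~\ref{lemma-fundamental}, which gives directly that $f$ is chain-transitive, i.e.\ $CR(f)=M$. Because $f\in\mathcal{R}_1$, one also has $\Omega(f)=CR(f)=M$, so the whole manifold is a single chain recurrence class. The second step is to invoke the first residual property: the unique chain recurrence class $M$ is topologically transitive, so $f$ itself is topologically transitive.

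The third and final step is to promote transitivity to mixing using $f\in\mathcal{R}_2$: the transitive chain recurrence class $M$ is topologically mixing, which is exactly the desired conclusion. The main obstacle to making this argument completely airtight is citing the right generic mixing statement: the version in \cite{AbCr12} is formulated per chain recurrence class rather than globally, and one must check that for $f\in\mathcal{R}_1$, the equalities $\Omega(f)=CR(f)=M$ allow us to treat $M$ as a single class to which the Abdenur--Crovisier generic dichotomy applies. Once this bookkeeping is in place, the three implications
\[
\text{barycenter in }A\ \Longrightarrow\ \text{chain-transitive}\ \Longrightarrow\ \text{transitive}\ \Longrightarrow\ \text{mixing}
\]
compose to yield the corollary.
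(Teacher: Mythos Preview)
Your proposal is correct and follows essentially the same approach as the paper: apply Lemma~\ref{lemma-fundamental} to obtain chain-transitivity, then invoke the Bonatti--Crovisier generic result to upgrade chain-transitivity to transitivity (since $CR(f)=M$ and $M$ is connected), and finally use the Abdenur--Crovisier generic result to upgrade transitivity to mixing. The only cosmetic difference is that the paper cites the global formulations of these results directly (\cite[Corollaire~1.2, Corollaire~1.3]{BC04} and \cite[Theorem~2]{AbCr12}) rather than the per-chain-class versions, so the bookkeeping you flag is already handled by the statements as cited.
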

\begin{proof}
Recall that, from \cite[Corollaire~1.2]{BC04}, there exists a Baire generic set $\mathcal{R}_{1} \subset \Diff(M)$ such that, if $f \in \mathcal{R}_{1}$, then $\Omega(f) = CR(f)$. Moreover,  as we are assuming that $M$ is connected, by \cite[Corollaire~1.3]{BC04} there exists a Baire generic set $\mathcal{R}_{2} \subset \Diff(M)$ such that, if $f \in \mathcal{R}_{2}$ and $M = CR(f)$, then $f$ is topologically transitive. From \cite[Theorem~2]{AbCr12}, there exists a Baire generic set $\mathcal{R}_{3} \subset \Diff(M)$ such that, if $f \in  \mathcal{R}_{3}$ and $f$ is topogically transitive, then $f$ is topogically mixing. Take  $f \in \mathcal{R} = \mathcal{R}_{1} \cap \mathcal{R}_{2} \cap \mathcal{R}_{3}$ and suppose that, for some $A \subseteq M$, we have $f(A)=A$, $\Omega(f)=\overline{A}$ and $f$ has the barycenter property in $A$. Therefore, by Lemma~\ref{lemma-fundamental}, $f$ is chain-transitive; and so $CR(f) = M$. Hence,  $f$ is topologically transitive. Since $f \in  \mathcal{R}_{3}$, then $f$ is topologically mixing.
\end{proof}

\subsection{Application 2}

Consider a $C^1$ diffeomorphism $f \colon M \to M$.  The index of a hyperbolic periodic point $p$ of $f$ is the dimension of the stable manifold at $p$; denote it by $\ind (p)$. Two hyperbolic periodic points $p$ and $q$ of $f$ are said to be homoclinically related if $W^s_f(\mathcal{O}_f(p)) \pitchfork  W^u_f(\mathcal{O}_f(q)) \neq \emptyset$ and $W^u_f(\mathcal{O}_f(p)) \pitchfork  W^s_f(\mathcal{O}_f(q))\neq\emptyset$, where the symbol $\pitchfork $ stands for a transversal intersection. We note that two hyperbolic periodic points which are homoclinically related have the same index (cf. \cite[Remark~1.4]{DG12}).


Since an Anosov diffeomorphism $f \colon M \to M$ is structurally stable (so the intersections of every stable manifold with every unstable manifold are transversal) and, by Theorem~\ref{theo-aux-complet}, a topologically transitive Anosov diffeomorphism has the weak $su-$intersecting property in $\per(f)$, we conclude that, under the transitivity assumption, every two periodic points of $f$ are homoclinically related. Consequently, by Theorem~\ref{theo-aux-complet} we conclude that:

\begin{corollary}\label{cor-index-constant}
If $f \colon M \to M$ is a topologically transitive Anosov diffeomorphism of a smooth closed Riemannian manifold $M$, then $\ind (p) = \ind (q)$ for every $p, q \in \per(f)$.
\end{corollary}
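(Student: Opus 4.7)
My approach is to combine Theorem~\ref{theo-aux-complet} with the structural stability of Anosov diffeomorphisms, exactly along the route already sketched in the paragraph preceding the corollary. The key observation is that a transitive Anosov diffeomorphism enjoys the weak $su$-intersecting property in $\per(f)$ (implication $(6) \Rightarrow (4)$ of Theorem~\ref{theo-aux-complet}), and that for an Anosov system every intersection of a stable manifold with an unstable manifold is automatically transverse.

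Fix $p, q \in \per(f)$. First I would apply Theorem~\ref{theo-aux-complet} to deduce that $W^s_f\big(\overline{\mathcal{O}_f(p)}\big) \cap W^u_f\big(\overline{\mathcal{O}_f(q)}\big)$ and $W^u_f\big(\overline{\mathcal{O}_f(p)}\big) \cap W^s_f\big(\overline{\mathcal{O}_f(q)}\big)$ are both nonempty. Since $p$ and $q$ are periodic, their orbits are finite, and hence coincide with their closures, so these intersections simplify to nonempty intersections of $W^s_f(\mathcal{O}_f(p))$ with $W^u_f(\mathcal{O}_f(q))$, and conversely.

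Next, I would upgrade these intersections to transverse ones. At each $x \in M$ the Anosov splitting provides $T_xM = E^s_x \oplus E^u_x$, and since the tangent spaces to the stable and unstable manifolds through $x$ are precisely $E^s_x$ and $E^u_x$, any intersection point automatically satisfies the transversality condition. Because $M$ is connected, the dimensions of $E^s$ and $E^u$ are locally and therefore globally constant, so the splitting is genuine. We thus obtain $W^s_f(\mathcal{O}_f(p)) \pitchfork W^u_f(\mathcal{O}_f(q)) \neq \emptyset$ and $W^u_f(\mathcal{O}_f(p)) \pitchfork W^s_f(\mathcal{O}_f(q)) \neq \emptyset$, which is exactly the definition of $p$ and $q$ being homoclinically related. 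Invoking the quoted fact from \cite[Remark~1.4]{DG12}, two homoclinically related hyperbolic periodic points share the same index, and so $\ind(p) = \ind(q)$.

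There is essentially no hard step here: all the substantive work is packaged in Theorem~\ref{theo-aux-complet}. The only points deserving mild care are the reduction from closure-based to orbit-based intersections (trivial once one uses periodicity) and the verification that the automatic transversality built into the Anosov splitting promotes the set-theoretic intersection guaranteed by the weak $su$-intersecting property to a transverse one, thus legitimately activating the equal-index criterion for homoclinically related hyperbolic periodic points.
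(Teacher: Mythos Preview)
Your proposal is correct and follows essentially the same route as the paper: invoke Theorem~\ref{theo-aux-complet} (implication $(6)\Rightarrow(4)$) to obtain the weak $su$-intersecting property in $\per(f)$, use the Anosov structure to upgrade the resulting intersections to transverse ones, conclude that any two periodic points are homoclinically related, and then apply \cite[Remark~1.4]{DG12}. The only cosmetic difference is that the paper phrases the transversality step as a consequence of structural stability, whereas you argue it directly from the splitting $T_xM=E^s_x\oplus E^u_x$; both justifications are valid and lead to the same conclusion.
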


We will improve the previous corollary. A diffeomorphism $f$ is called Kupka-Smale if all the periodic points of $f$ are hyperbolic and for every pair of periodic points $p$ and $q$ of $f$ the corresponding manifolds $W^s_f(\mathcal{O}_f(p))$ and $W^u_f(\mathcal{O}_f(q))$ are transversal. Let
$$\mathcal{KS} \,=\, \big\{\text{$C^1$ Kupka-Smale diffeomorphisms of $M$}\big\}.$$
Adapting the proof of \cite[Corollary~3.2]{C15} we obtain:

\begin{corollary}\label{HMP-KS-lemma}
If $f\in \mathcal{KS}$ and $f$ has the weak $su-$intersecting property in $\per(f)$, then all the periodic points have the same index.
\end{corollary}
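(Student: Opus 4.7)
The plan is to unpack the definition of the weak $su$-intersecting property applied to pairs of hyperbolic periodic points, upgrade the resulting nonempty intersections to transversal ones using the Kupka--Smale hypothesis, and then invoke the standard fact that homoclinically related hyperbolic periodic points share the same index.

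First, fix $p, q \in \per(f)$. Since $p$ and $q$ are periodic, their orbits $\mathcal{O}_{f}(p)$ and $\mathcal{O}_{f}(q)$ are finite and hence already closed, so
\[
\overline{\mathcal{O}_{f}(p)} \,=\, \mathcal{O}_{f}(p) \qand \overline{\mathcal{O}_{f}(q)} \,=\, \mathcal{O}_{f}(q).
\]
Moreover, the stable (resp.\ unstable) set of a finite $f$-invariant set of hyperbolic periodic points coincides with the union of the stable (resp.\ unstable) manifolds of its points. In particular,
\[
W^{s}_{f}\big(\mathcal{O}_{f}(p)\big) \,=\, \bigcup_{i}\, W^{s}_{f}(f^{i}(p))
\qand
W^{u}_{f}\big(\mathcal{O}_{f}(q)\big) \,=\, \bigcup_{j}\, W^{u}_{f}(f^{j}(q)),
\]
and similarly for the pair $(q,p)$.

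Next, the weak $su$-intersecting property in $\per(f)$, applied once to the ordered pair $(p,q)$ and once to $(q,p)$, yields
\[
W^{s}_{f}\big(\mathcal{O}_{f}(p)\big) \cap W^{u}_{f}\big(\mathcal{O}_{f}(q)\big) \,\neq\, \emptyset
\qand
W^{s}_{f}\big(\mathcal{O}_{f}(q)\big) \cap W^{u}_{f}\big(\mathcal{O}_{f}(p)\big) \,\neq\, \emptyset.
\]
Since $f$ is Kupka--Smale, every such intersection of a stable and an unstable manifold of (hyperbolic) periodic points is transversal, so the above nonempty intersections are in fact transversal. This is exactly the definition of $p$ and $q$ being homoclinically related (applied at suitable iterates, which does not affect the index).

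Finally, invoking the fact recalled in the excerpt just before Corollary~\ref{cor-index-constant} (cf.\ \cite[Remark~1.4]{DG12}), two homoclinically related hyperbolic periodic points have the same index. Hence $\ind(p) = \ind(q)$, and since $p, q \in \per(f)$ were arbitrary, the index is constant on $\per(f)$. The only real step is the passage from ``nonempty intersection'' to ``transversal intersection,'' which is immediate from the Kupka--Smale hypothesis; everything else is bookkeeping.
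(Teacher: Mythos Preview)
Your proof is correct and follows essentially the same approach as the paper: use the weak $su$-intersecting property to get nonempty intersections of stable and unstable sets of the periodic orbits, upgrade these to transversal intersections via the Kupka--Smale hypothesis, and conclude that $p$ and $q$ are homoclinically related and hence have the same index. The paper's argument is identical in structure but omits the bookkeeping you included about finite orbits being closed and the decomposition of $W^{s}_f(\mathcal{O}_f(p))$ as a union.
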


\begin{proof}
Let $p,q$ be two periodic points of $f$. As $f$ has the weak $su-$intersecting property in $\per(f)$, the sets $W^s_f(\mathcal{O}_f(p))\cap W^u_f(\mathcal{O}_f(q))$ and $W^u_f(\mathcal{O}_f(p))\cap W^s_f(\mathcal{O}_f(q))$ are nonempty. Taking into account that $f\in\mathcal{KS}$, we know that these intersections are transversal. Therefore, $p$ and $q$ are homoclinically related, and so they have the same index.
\end{proof}

A map $f\in\Diff (M)$ is said to be a \emph{star diffeomorphism} if there exists a $C^1$ neighborhood $\mathcal{U}$ of $f$ such that every periodic point of any $g\in\mathcal{U}$ is hyperbolic. S. Hayashi proved in \cite{H92} that being a star diffeomorphism is equivalent to being $\Omega-$stable. 

To simplify the notation, we will abbreviate the expression
\begin{center}
   ``$C^{1}$ interior of a set $S~\subseteq~\Diff (M)$"
\end{center} 
into $\intt_{C^{1}} (S)$. 

Define 
$$\mathcal{PCI} \,=\, \big\{ f \in \Diff (M)\colon \, \forall \, p, q \in \per_h(f),\, \ind (p)=\ind (q)\big\}.$$
From Corollary \ref{cor-index-constant}, we know that any   topologically transitive Anosov diffeomorphism belongs to $\intt_{C^{1}} (\mathcal{PCI})$. 
More generally: 

\begin{proposition}\label{pci-interior-omega-stable}
If $f \in \intt_{C^{1}} (\mathcal{PCI})$, then $f$ is $\Omega-$stable.
\end{proposition}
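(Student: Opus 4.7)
The plan is to invoke Hayashi's theorem \cite{H92}, which identifies $\Omega$-stable diffeomorphisms with star diffeomorphisms, namely those possessing a $C^1$-neighborhood all of whose elements have only hyperbolic periodic points. It therefore suffices to show that every $g$ sufficiently $C^1$-close to $f$ has all periodic points hyperbolic. Fix a $C^1$-open neighborhood $\mathcal{U}$ of $f$ contained in $\mathcal{PCI}$; the task is to verify the star property for every element of $\mathcal{U}$.

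Arguing by contradiction, suppose some $g_0 \in \mathcal{U}$ admits a non-hyperbolic periodic point $p$ of period $N$, so that $Dg_0^N(p)$ has an eigenvalue on the unit circle. By the Franks Lemma, arbitrarily small $C^1$-perturbations supported in a tubular neighborhood of $\mathcal{O}(p)$ can push this critical eigenvalue either inside or outside the unit circle. This produces two diffeomorphisms $g_1, g_2 \in \mathcal{U}$ that coincide with $g_0$ off the tubular neighborhood, and for which $p$ is hyperbolic with indices $i_1$ and $i_2$, chosen so that $i_1 \neq i_2$. Any hyperbolic periodic orbit $\mathcal{O}(q)$ of $g_0$ disjoint from $\mathcal{O}(p)$ then persists as a hyperbolic periodic orbit of both $g_1$ and $g_2$ with the same index $j$ (its index for $g_0$, preserved under the localized perturbation), and the hypothesis $g_1, g_2 \in \mathcal{PCI}$ forces $j = i_1 = i_2$, a contradiction.

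The main obstacle is to secure the auxiliary hyperbolic periodic orbit $\mathcal{O}(q)$ disjoint from $\mathcal{O}(p)$. I would address this by a preliminary $C^1$-small perturbation of $g_0$ supported away from $\mathcal{O}(p)$, combining a Kupka-Smale perturbation with Pugh's $C^1$ closing lemma \cite{Pugh}, so that hyperbolic periodic points outside $\mathcal{O}(p)$ exist while $p$ itself remains non-hyperbolic. In the degenerate situation $\Omega(g_0) = \mathcal{O}(p)$, where such a $q$ is unavailable, one argues directly via a generic unfolding of the non-hyperbolicity at $p$: the Franks Lemma produces, within $\mathcal{U}$, a nearby diffeomorphism in which $\mathcal{O}(p)$ has split into two coexisting hyperbolic periodic orbits of distinct indices, again contradicting membership in $\mathcal{PCI}$. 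Together these cases show that $\mathcal{U}$ consists of star diffeomorphisms, whence by Hayashi's theorem $f$ is $\Omega$-stable.
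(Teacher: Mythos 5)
Your overall plan coincides with the paper's: reduce to showing that every $g$ in a $C^1$-neighborhood $\mathcal{U}$ of $f$ contained in $\mathcal{PCI}$ has only hyperbolic periodic points, so that $f$ is a star diffeomorphism, and then invoke Hayashi's theorem. The paper's proof at this point simply cites \cite[Lemma~2.4]{SSY10} (reproduced as Lemma~\ref{index-function}), which, given any $g$ with a non-hyperbolic periodic point, produces a nearby $h$ possessing two hyperbolic periodic orbits of distinct indices — a contradiction with $h\in\mathcal{PCI}$. You instead try to reprove this lemma by hand with the Franks Lemma, the closing lemma, and Kupka--Smale perturbations. Your main case (an auxiliary hyperbolic orbit $\mathcal{O}(q)$ disjoint from $\mathcal{O}(p)$ persists under a Franks perturbation localized near $\mathcal{O}(p)$ that pushes the critical eigenvalue inside or outside) is essentially sound, modulo the standard care needed to localize the Kupka--Smale perturbation away from $\mathcal{O}(p)$ so that $p$ stays non-hyperbolic.

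The genuine gap is in the degenerate case $\Omega(g_0)=\mathcal{O}(p)$. You write that ``the Franks Lemma produces \dots a nearby diffeomorphism in which $\mathcal{O}(p)$ has split into two coexisting hyperbolic periodic orbits of distinct indices.'' This is not something the Franks Lemma can do: it only perturbs the derivative of the map along a prescribed finite orbit, keeping that orbit and its period fixed; it never splits an orbit. Splitting a non-hyperbolic periodic orbit into two nearby hyperbolic ones requires a genuine nonlinear bifurcation unfolding (saddle-node, period-doubling, or Naimark--Sacker depending on the critical eigenvalue), which is a different tool and is precisely what the SSY10 lemma encapsulates. So, as written, your degenerate-case argument does not hold. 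A correct repair (without orbit-splitting) would be: apply Franks at $p$ to make it hyperbolic of index $i_1$ while keeping the critical eigenvalue very close to the unit circle; since a single hyperbolic periodic orbit cannot be all of $\Omega$ on a closed manifold, the new map has nonwandering points off $\mathcal{O}(p)$, and a closing-lemma plus Kupka--Smale perturbation supported away from $\mathcal{O}(p)$ produces a hyperbolic $q$; a second Franks perturbation at $p$ then flips the critical eigenvalue to the other side, changing $\ind(p)$ to $i_2\ne i_1$ while leaving $\ind(q)$ untouched — giving coexisting indices and a contradiction. This works, but it is exactly the sort of bookkeeping that the paper avoids by citing the lemma directly.
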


\begin{proof}
Consider $f\in \intt_{C^{1}}(\mathcal{PCI})$ and let $\mathcal{U}$ be a $C^1$ neighborhood of $f$ contained in $\mathcal{PCI}$. We claim that $f$ is a star diffeomorphism. Suppose otherwise. Then there are  $g\in\mathcal{U}$ and a non-hyperbolic periodic point $p$ of $g$. 

\begin{lemma}\label{index-function}\cite[Lemma~2.4]{SSY10}
Let $G \in \Diff (M)$ and $P$ be a non-hyperbolic periodic point of $G$. Then, for every $C^1$ neighborhood $\mathcal{V}$ of $G$ there are $H\in \mathcal{V}$ and $P_1, P_2 \in \per_h(H)$ such that $\ind(P_1)\neq \ind(P_2)$.
\end{lemma}

By Lemma~\ref{index-function}, there exist $h\in\mathcal{U}$ and two distinct hyperbolic periodic points $p_1, p_2$ of $h$ with different indices. Yet, this contradicts the fact that $ h \in \mathcal{PCI}$. Therefore, $f$ is a star diffeomorphism, and so $\Omega-$stable.
\end{proof}

\begin{corollary}\label{cor-pci-chaintransitive-anosov}
A diffeomorphism $f \in \intt_{C^{1}} (\mathcal{PCI})$ is chain-transitive if and only if it is a topologically transitive Anosov diffeomorphism.
\end{corollary}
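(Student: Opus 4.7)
My plan is to reduce the corollary to two results already established in this section: Proposition~\ref{pci-interior-omega-stable}, which upgrades membership in $\intt_{C^{1}}(\mathcal{PCI})$ to $\Omega$-stability, and the equivalence $(6) \Leftrightarrow (11)$ of Theorem~\ref{theo-aux-complet}, which characterises chain-transitivity within the $\Omega$-stable class.

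For the ``only if'' direction, given $f \in \intt_{C^{1}}(\mathcal{PCI})$ that is chain-transitive, I would first invoke Proposition~\ref{pci-interior-omega-stable} to conclude that $f$ is $\Omega$-stable. Assuming $\dim M \geqslant 2$ so that Theorem~\ref{theo-aux-complet} applies, the implication $(11) \Rightarrow (6)$ there immediately yields that $f$ is a topologically transitive Anosov diffeomorphism.

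The reverse direction is essentially trivial: every topologically transitive homeomorphism of a compact metric space is chain-transitive, as a dense orbit furnishes, for any pair of points $u,v$ and any $\delta>0$, an orbit segment passing $\delta$-close to $u$ and to $v$, which, bracketed by $u$ and $v$, is a $\delta$-chain from $u$ to $v$. No structure of $\intt_{C^{1}}(\mathcal{PCI})$ is needed on this side of the equivalence.

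There is no real obstacle here; all the hard work was already done in Proposition~\ref{pci-interior-omega-stable} and Theorem~\ref{theo-aux-complet}. The only point to watch is that the standing assumption $\dim M \geqslant 2$ of Theorem~\ref{theo-aux-complet} must be available. In dimension one the closed manifold $M$ is $S^{1}$, which carries no Anosov diffeomorphism, so the right-hand side of the equivalence would be vacuous and the corollary is naturally a dimension-$\geqslant 2$ statement, consistent with the hypotheses of the theorem invoked.
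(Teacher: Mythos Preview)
Your forward direction is exactly the paper's: invoke Proposition~\ref{pci-interior-omega-stable} to get $\Omega$-stability, then use the implication $(11)\Rightarrow(6)$ of Theorem~\ref{theo-aux-complet}.

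For the converse the paper does a little more than you. It reads the corollary as the set-theoretic equality $\intt_{C^{1}}(\mathcal{PCI})\cap\mathcal{CT}=\{\text{transitive Anosov}\}$, so its converse also checks that a transitive Anosov diffeomorphism lies in $\intt_{C^{1}}(\mathcal{PCI})$: this follows from the $C^{1}$-openness of the transitive Anosov class together with Corollary~\ref{cor-index-constant}, which says each such map has constant index on $\per(f)$. Your argument treats $f\in\intt_{C^{1}}(\mathcal{PCI})$ as a standing hypothesis and only observes that transitivity implies chain-transitivity, which is correct under that reading but slightly weaker than what the paper records (and what is used later in Proposition~\ref{theo-c1-interior} to identify $\intt_{C^{1}}(\mathcal{WIP}_h)$ with $\intt_{C^{1}}(\mathcal{PCI})\cap\mathcal{CT}$).
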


\begin{proof}
Suppose that $f$ is chain-transitive and $f$ is in $C^{1}$ interior of $\mathcal{PCI}$. By Proposition~\ref{pci-interior-omega-stable}, the diffeomorphism $f$ is $\Omega-$stable. Since $f$ is also chain-transitive, by Theorem~\ref{theo-aux-complet} $f$ is a topologically transitive Anosov diffeomorphism. The converse assertion is clear, due to the openess of the set of topologically transitive Anosov diffeomorphisms and  Corollary~\ref{cor-index-constant}.
\end{proof} 
 
The next proposition complements the previous information on a $C^1$  generic subset of $\Diff(M)$.

\begin{proposition}\label{prop-generic-pci}
There is a $C^1$ generic set $\mathcal{R} \subset \Diff(M)$ such that every $f\in \mathcal{R}\, \cap\, \mathcal{PCI}$ is 
$\Omega-$stable.
\end{proposition}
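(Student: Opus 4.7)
The strategy is to exhibit a $C^{1}$ open and dense subset $\mathcal{R}\subset\Diff(M)$ on which the index set $\mathcal{I}(f):=\{\ind(p)\colon p\in\per_h(f)\}$ is locally constant, and then to invoke Lemma~\ref{index-function} to rule out any non-hyperbolic periodic point for $f\in\mathcal{R}\cap\mathcal{PCI}$. Once $f$ is known to be a star diffeomorphism, Hayashi's theorem \cite{H92} gives the $\Omega-$stability.

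To construct $\mathcal{R}$, I would use that hyperbolic periodic orbits persist under $C^{1}$ perturbations. So, for each $k\in\{0,1,\dots,\dim M\}$, the set
$$
\mathcal{O}_k \,=\, \big\{g\in\Diff(M)\colon g \text{ has a hyperbolic periodic point of index } k\big\}
$$
is $C^{1}$ open. Define $\mathcal{R}_k=\mathcal{O}_k\,\cup\,\intt\big(\Diff(M)\setminus\mathcal{O}_k\big)$. This is $C^{1}$ open, and it is also dense because its complement is contained in the boundary $\overline{\mathcal{O}_k}\setminus\mathcal{O}_k$ of an open set, which is nowhere dense. Since $\dim M<\infty$, the finite intersection $\mathcal{R}=\bigcap_{k=0}^{\dim M}\mathcal{R}_k$ is still $C^{1}$ open and dense, hence Baire generic. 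By construction, for each $f\in\mathcal{R}$ and every $k$ the indicator $\mathbb{1}[k\in\mathcal{I}(\cdot)]$ is locally constant at $f$; intersecting the finitely many witnessing neighborhoods yields a $C^{1}$ neighborhood $\mathcal{U}$ of $f$ on which $\mathcal{I}(\cdot)\equiv\mathcal{I}(f)$.

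Now fix $f\in\mathcal{R}\cap\mathcal{PCI}$, so that $|\mathcal{I}(f)|\leqslant 1$, and consequently $|\mathcal{I}(g)|\leqslant 1$ for every $g\in\mathcal{U}$. Suppose, for contradiction, that $f$ is not a star diffeomorphism. Then there exists $g\in\mathcal{U}$ admitting a non-hyperbolic periodic point $P$. Applying Lemma~\ref{index-function} to the pair $(g,P)$ and to the $C^{1}$ neighborhood $\mathcal{U}$ of $g$ (which is open and contains $g$, so qualifies), we produce $h\in\mathcal{U}$ with two hyperbolic periodic points of distinct indices, i.e.\ $|\mathcal{I}(h)|\geqslant 2$. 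This contradicts the local constancy of $\mathcal{I}$ on $\mathcal{U}$. Hence $f$ is a star diffeomorphism, and by \cite{H92} it is $\Omega-$stable.

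The only delicate point in the argument is the local constancy of $\mathcal{I}$ on a $C^{1}$ dense open set, which crucially uses the finiteness of $\dim M$ (so that a finite intersection of open dense sets survives as open dense) together with the $C^{1}$ persistence of hyperbolic periodic orbits. After that, Lemma~\ref{index-function} turns the potential failure of being a star diffeomorphism into an immediate numerical clash with $\mathcal{PCI}$, and the generic-set statement follows.
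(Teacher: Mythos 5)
Your proof is correct and reaches the same conclusion via the same pivotal tool (Lemma~\ref{index-function} plus Hayashi's theorem), but the construction of the generic set is genuinely different. The paper sets $\mathcal{R}=\mathcal{KS}\cap\mathcal{I}$, where $\mathcal{I}$ is the open dense set from \cite[Lemma~3.3]{C15} (Lemma~\ref{b.carvalho-lemma}) with the property that ``robustly having two indices'' forces $f$ itself to have two indices; the Kupka--Smale factor $\mathcal{KS}$ is irrelevant to Proposition~\ref{prop-generic-pci} and is carried along only because the same $\mathcal{R}$ is reused in Proposition~\ref{c1-generic} and its corollaries. You instead build $\mathcal{R}$ from scratch: for each $k$ the set $\mathcal{O}_k$ of diffeomorphisms with a hyperbolic periodic point of index $k$ is open (persistence of hyperbolic orbits), $\mathcal{R}_k=\mathcal{O}_k\cup\intt(\Diff(M)\setminus\mathcal{O}_k)$ is open dense (complement is $\partial\mathcal{O}_k$, nowhere dense), and the finite intersection over $k=0,\dots,\dim M$ yields an open dense $\mathcal{R}$ on which the index set $\mathcal{I}(\cdot)$ is locally constant. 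This is strictly stronger than what the paper's $\mathcal{I}$ records (you get local constancy of the full index set, not merely of the dichotomy $|\mathcal{I}|\leq 1$ versus $|\mathcal{I}|\geq 2$), it is open dense rather than residual, and it is self-contained rather than quoting \cite{C15}. In fact your construction is essentially the argument the paper spells out later for the volume-preserving analogue Lemma~\ref{b.carvalho-lemma-volume-preserving}, specialized by index. Once local constancy is in hand, the remaining steps match the paper: assume $f\in\mathcal{R}\cap\mathcal{PCI}$ is not star, find $g\in\mathcal{U}$ with a non-hyperbolic periodic point, apply Lemma~\ref{index-function} inside $\mathcal{U}$ to produce $h\in\mathcal{U}$ with $|\mathcal{I}(h)|\geq 2$, contradict local constancy, conclude $f$ is star, and invoke \cite{H92}.
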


\begin{proof}

We will adapt the proof of \cite[Theorem~C]{C15}. We start by  recalling that the existence of periodic points with different indices is a robust condition in the following sense:

\begin{lemma}\cite[Lemma~3.3]{C15}\label{b.carvalho-lemma}
There exists an $C^1$ open and dense set $\mathcal{I}$ of $\Diff(M)$ such that every $f \in \mathcal{I}$ has this property: if for any $C^1$ neighborhood $\mathcal{U}$ of $f$ some $g\in\mathcal{U}$ has two hyperbolic periodic points with different indices, then $f$ also has two hyperbolic periodic points with different indices.
\end{lemma}

Consider $f \in \mathcal{PCI} \,\cap\, \mathcal{KS}\, \cap\, \mathcal{I}$. We claim that $f$ is a star diffeomorphism. Suppose, otherwise, that for every $C^1$ neighborhood $\mathcal{U}$ of $f$ there exists some $g\in\mathcal{U}$ which  has a non-hyperbolic periodic point. We perturb $g$ using Lemma~\ref{index-function} to obtain $h\in\mathcal{U}$ such that $h$ has two distinct hyperbolic periodic points with different indices.  Since $f \in\mathcal{I}$, by Lemma~\ref{b.carvalho-lemma} we conclude that $f$ has two distinct hyperbolic periodic points with different indices as well. Yet, as $f$ belongs in $\mathcal{PCI}$, this is a contradiction. Thus, $f$ is $\Omega-$stable. So, we take 
\begin{equation}\label{def:R}
\mathcal{R} \,=\,  \mathcal{KS} \cap \mathcal{I}.
\end{equation}
\end{proof}

\begin{corollary}\label{generic-pci-chain-transitive}
If $f \in \mathcal{R}\,\cap\, \mathcal{PCI}$ and is chain-transitive, then $f$ is a topologically transitive Anosov diffeomorphism.
\end{corollary}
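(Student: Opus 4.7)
The plan is to combine Proposition~\ref{prop-generic-pci} with the chain of equivalences in Theorem~\ref{theo-aux-complet}. Since the set $\mathcal{R}$ was defined precisely in \eqref{def:R} as $\mathcal{KS}\cap\mathcal{I}$, and this is the set produced in the course of establishing Proposition~\ref{prop-generic-pci}, I expect the argument to reduce to two quick invocations of already-proved results.

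First, I would note that $f\in\mathcal{R}\cap\mathcal{PCI}$. By Proposition~\ref{prop-generic-pci}, such an $f$ is $\Omega$-stable. (Concretely, the proof of that proposition shows that $f$ cannot fail to be a star diffeomorphism: any failure of hyperbolicity at a periodic point could be perturbed, via Lemma~\ref{index-function}, to produce two hyperbolic periodic points with distinct indices, which by the robustness property of $\mathcal{I}$ from Lemma~\ref{b.carvalho-lemma} would force $f$ itself to possess such points, contradicting $f\in\mathcal{PCI}$. Then Hayashi's theorem \cite{H92} upgrades the star property to $\Omega$-stability.)

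Second, having $\Omega$-stability in hand, I would apply the implication $(11)\Rightarrow(6)$ of Theorem~\ref{theo-aux-complet}, which states that a chain-transitive $C^1$ $\Omega$-stable diffeomorphism of a smooth closed Riemannian manifold of dimension $\geqslant 2$ is a topologically transitive Anosov diffeomorphism. Since $f$ is assumed to be chain-transitive, the hypothesis is satisfied, and the conclusion of the corollary follows immediately.

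Because the real work has been done upstream (in Proposition~\ref{pci-interior-omega-stable} and Proposition~\ref{prop-generic-pci} for $\Omega$-stability, and in the long chain of implications in Theorem~\ref{theo-aux-complet} for the equivalence between chain-transitivity and being a transitive Anosov diffeomorphism under $\Omega$-stability), I do not anticipate any substantive obstacle: the proof is essentially a two-line deduction. The only caveat worth spelling out explicitly is that the dimension hypothesis in Theorem~\ref{theo-aux-complet} (that $\dim M \geqslant 2$) is inherited from the ambient setting of Section~\ref{se:appl}, so no extra assumption on $M$ is needed.
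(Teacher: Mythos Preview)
Your proposal is correct and follows exactly the same two-step approach as the paper: invoke Proposition~\ref{prop-generic-pci} to obtain $\Omega$-stability from $f\in\mathcal{R}\cap\mathcal{PCI}$, then apply Theorem~\ref{theo-aux-complet} (specifically the implication $(11)\Rightarrow(6)$) to conclude from chain-transitivity that $f$ is a topologically transitive Anosov diffeomorphism. Your additional remarks on the mechanism behind Proposition~\ref{prop-generic-pci} and on the dimension hypothesis are accurate but not needed for the argument itself.
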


\begin{proof}
Take a chain-transitive $f \in \mathcal{R}\,\cap\, \mathcal{PCI}$. By Proposition \ref{prop-generic-pci}, $f$ is $\Omega-$stable. Then, by Theorem~\ref{theo-aux-complet}, $f$ is a topologically transitive Anosov diffeomorphism. 
\end{proof}

\subsection{Application 3}

Let $M$ be a smooth closed Riemannian manifold with dimension $\geqslant 2$ and $f\colon M \to M$ be a $C^1$ diffeomorphism. Recall that if $f$ has the weak $su-$intersecting property in $\per(f)$, then $f$ has the barycenter property in $\per(f)$; furthermore, if $\per(f) = \per_h(f)$, then $f$ has the weak $su-$intersecting property in $\per(f)$ if and only if $f$ has the barycenter property in $\per(f)$. Since in $\mathcal{KS}$ we have $\per(f) = \per_h(f)$, then generically $f$ has the weak $su-$intersecting property in $\per(f)$ if and only if $f$ has the barycenter property in $\per(f)$.

Consider the following subsets of $\Diff (M)$:
\begin{eqnarray*}
\mathcal{CT} &=& \big\{f \in \Diff (M)\colon \, \text{$f$ is chain-transitive}\big\} \\
\mathcal{GL} &=& \big\{f \in \Diff (M)\colon \, \text{$f$ has the gluing orbit property}\big\} \\
\mathcal{SPE} &=& \big\{f \in \Diff (M)\colon \, \text{$f$ has the specification property}\big\} \\
\mathcal{TLS} &=& \big\{f \in \Diff (M)\colon \,\text{$f$ has the two-sided limit shadowing property}\big\} \\
\mathcal{IP} &=& \big\{f \in \Diff (M) \colon \,\text{$f$ has the $su-$intersecting property in $M$}\big\}\\
\mathcal{IP}_{h} &=& \big\{f \in \Diff (M)\colon \, \text{$f$ has the $su-$intersecting property in $\per_{h}(f)$}  \big\}\\
\mathcal{WIP}_{h} &=& \big\{f \in \Diff (M)\colon \, \text{$f$ has the weak $su-$intersecting property in $\per_{h}(f)$}\big\}.
\end{eqnarray*}

J. Palis and S. Newhouse proved in \cite{Palis-New} that, if $f\colon M \to M$ is a $C^1$ diffeomorphism of a closed Riemannian two-dimensional manifold whose non-wandering set is hyperbolic, then $\Omega(f) = \overline{\per(f)}$ and $f$ is $C^1$ approximated by an $\Omega-$stable diffeomorphism $g$ such that $\Omega(g) = \Omega(f)$. Therefore, if such an $f$ belongs in the $C^1$ interior of $\mathcal{WIP}_{h}$, then  Theorem~\ref{theo-aux-complet} yields that $f$ is a topologically transitive Anosov diffeomorphism. In the next subsection we provide a broader framework to this conclusion.

\subsubsection{\emph{\textbf{The $C^1$ interior of $\mathcal{WIP}_{h}$}}}

In \cite{SSY10}, K. Sakai, N. Sumi and K. Yamamoto showed that  $\intt_{C^{1}} (\mathcal{SPE})$ is equal to the space of  topologically transitive Anosov diffeomorphisms. In \cite{C15}, B. Carvalho proved that $\intt_{C^{1}} (\mathcal{TLS})$  is also equal to the set of topologically transitive Anosov diffeomorphisms. In \cite{L16-2}, M. Lee proved that the $C^{1}$ interior of the diffeomorphisms satisfying the barycenter property in $\per(f)$ is contained in the set of $\Omega-$stable diffeomorphisms. We extend these results as follows.

\begin{proposition}\label{theo-c1-interior}
The set $\intt_{C^{1}}(\mathcal{WIP}_{h})$ coincides with the space of topologically transitive Anosov diffeomorphisms.  Moreover, $\intt_{C^{1}}(\mathcal{WIP}_{h})  =  \intt_{C^{1}}(\mathcal{PCI}) \cap  \mathcal{CT}$.
\end{proposition}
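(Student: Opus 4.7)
The plan is to establish the first equality by double inclusion, and then to derive the second equality from the first together with Corollaries~\ref{cor-index-constant} and \ref{cor-pci-chaintransitive-anosov}. The forward inclusion
$\{\text{topologically transitive Anosov}\} \subseteq \intt_{C^{1}}(\mathcal{WIP}_{h})$
is immediate: the class of topologically transitive Anosov diffeomorphisms is $C^{1}$ open, and for every such $g$ one has $\per(g)=\per_{h}(g)$, so the implication $(6)\Rightarrow(4)$ of Theorem~\ref{theo-aux-complet} (applied at every $g$ in a small $C^{1}$ neighbourhood of $f$) gives $g\in\mathcal{WIP}_{h}$. The harder inclusion will be split into two steps: first showing that any $f\in\intt_{C^{1}}(\mathcal{WIP}_{h})$ is $\Omega$-stable, and then upgrading this to topological transitivity plus Anosov via Theorem~\ref{theo-aux-complet}.

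The main obstacle is the $\Omega$-stability step. Fix $f\in\intt_{C^{1}}(\mathcal{WIP}_{h})$ and a $C^{1}$ neighbourhood $\mathcal{U}\subseteq\mathcal{WIP}_{h}$ of $f$. By Hayashi's characterization \cite{H92}, it suffices to show that every periodic point of every $g\in\mathcal{U}$ is hyperbolic. Suppose, towards a contradiction, that some $g\in\mathcal{U}$ admits a non-hyperbolic periodic point. Lemma~\ref{index-function} produces $h\in\mathcal{U}$ carrying two hyperbolic periodic points of distinct indices. Since hyperbolic periodic points admit locally constant index under $C^{1}$ perturbations, the property ``having two hyperbolic periodic points of distinct indices'' is $C^{1}$ open on $\mathcal{U}$, and because $\mathcal{KS}$ is $C^{1}$ residual I can perturb $h$ to some $h'\in\mathcal{U}\cap\mathcal{KS}$ that still exhibits two hyperbolic periodic points of distinct indices. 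As $h'\in\mathcal{KS}$ we have $\per(h')=\per_{h}(h')$, so $h'\in\mathcal{WIP}_{h}$ translates into the weak $su$-intersecting property in $\per(h')$; Corollary~\ref{HMP-KS-lemma} then forces all periodic points of $h'$ to share a common index, contradicting the choice of $h'$. Hence $f$ is a star diffeomorphism and therefore $\Omega$-stable. Now $\per(f)=\per_{h}(f)$, so $f\in\mathcal{WIP}_{h}$ is precisely the weak $su$-intersecting property in $\per(f)$, and the implication $(4)\Rightarrow(6)$ of Theorem~\ref{theo-aux-complet} yields that $f$ is a topologically transitive Anosov diffeomorphism, closing the first equality.

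For the second equality, assume first $f\in\intt_{C^{1}}(\mathcal{WIP}_{h})$. By what we have just proved, $f$ is a topologically transitive Anosov diffeomorphism, so in particular $f\in\mathcal{CT}$; moreover Corollary~\ref{cor-index-constant} applied to $f$ and to every diffeomorphism in a small Anosov-transitive neighbourhood of $f$ places that neighbourhood inside $\mathcal{PCI}$, hence $f\in\intt_{C^{1}}(\mathcal{PCI})$. Conversely, any $f\in\intt_{C^{1}}(\mathcal{PCI})\cap\mathcal{CT}$ is a topologically transitive Anosov diffeomorphism by Corollary~\ref{cor-pci-chaintransitive-anosov}, and therefore belongs to $\intt_{C^{1}}(\mathcal{WIP}_{h})$ by the first equality, completing the argument.
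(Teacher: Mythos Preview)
Your proof is correct and uses essentially the same ingredients as the paper's: the perturbation to a Kupka-Smale diffeomorphism carrying two hyperbolic periodic points of distinct indices, the contradiction with Corollary~\ref{HMP-KS-lemma}, Hayashi's star characterization, and Theorem~\ref{theo-aux-complet}. The only difference is organizational: the paper first establishes $\intt_{C^{1}}(\mathcal{WIP}_{h}) \subseteq \intt_{C^{1}}(\mathcal{PCI})$ as a separate lemma, then invokes Proposition~\ref{pci-interior-omega-stable} to obtain $\Omega$-stability, proves the second equality, and finally reads off the first equality via Corollary~\ref{cor-pci-chaintransitive-anosov}; you instead fuse the ``different-index'' contradiction with the star argument to get $\Omega$-stability in one stroke, establish the first equality directly, and derive the second equality from it. Both routes are equivalent and neither is materially shorter.
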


\begin{proof}

We start by establishing the following inclusion.

\begin{lemma}\label{interior-wip-interior-pci}
$\intt_{C^{1}}(\mathcal{WIP}_{h}) \subseteq \intt_{C^{1}}(\mathcal{PCI})$.
\end{lemma}

\begin{proof}
We will adapt the proof of \cite[Theorem~A]{C15}. Consider $f\in \intt_{C^{1}}(\mathcal{WIP}_{h})$ and let $\mathcal{U}$ be a $C^1$ open neighborhood of $f$ contained in $\mathcal{WIP}_{h}$.  Suppose that $f \notin \intt_{C^{1}}(\mathcal{PCI})$. Thus, for any $C^1$ open neighborhood $\mathcal{V}$ of $f$, one has $\mathcal{V} \cap \big(\Diff(M) \setminus \mathcal{PCI}\big) \neq \emptyset$. Take $g \in \mathcal{U}  \cap \big(\Diff(M) \setminus \mathcal{PCI}\big)$. Since $\per_{h}(g) \neq  \emptyset$ because $g \in \mathcal{WIP}_{h}$, and $g \notin  \mathcal{PCI}$, there are two hyperbolic periodic points $p_1, p_2$ of $g$ with different indices. Taking into account that  these points are hyperbolic and $\mathcal{KS}$ is $C^1$ generic in $\Diff (M)$, we may perturb $g$ to obtain $\tilde{g} \in \mathcal{U} \cap \mathcal{KS}$ such that $\tilde{g}$ has two  hyperbolic periodic points $\tilde{p}_1, \tilde{p}_2$ with different indices. But this contradicts Corollary~\ref{HMP-KS-lemma}. So $f \in \intt_{C^{1}}(\mathcal{PCI})$.
\end{proof}

We proceed by showing that $\intt_{C^{1}}(\mathcal{WIP}_{h})  =  \intt_{C^{1}}(\mathcal{PCI}) \cap  \mathcal{CT}$.

\noindent $(\subseteq)$ Take $f \in \intt_{C^{1}}(\mathcal{WIP}_{h})$. By Lemma \ref{interior-wip-interior-pci}, $f \in \intt_{C^{1}}(\mathcal{PCI})$, and,  by Proposition \ref{pci-interior-omega-stable}, $f$ is $\Omega-$stable. Since $f$ also has the weak $su-$intersecting property in $\per(f)$, then $f$ is chain-transitive by Theorem~\ref{theo-aux-complet}. So, $f \in \intt_{C^{1}}(\mathcal{PCI}) \cap  \mathcal{CT}$.

\noindent $(\supseteq)$ Suppose that $f \in  \intt_{C^{1}}(\mathcal{PCI}) \cap  \mathcal{CT}$. By Corollary \ref{cor-pci-chaintransitive-anosov}, $f$ is a topologically transitive Anosov diffeomorphism. As the space of topologically transitive Anosov diffeomorphisms is open, there exists a $C^1$ open neighborhood $\mathcal{U}$ of $f$ such that any $g \in \mathcal{U}$ is a topologically transitive Anosov diffeomorphism. By Theorem~\ref{theo-aux-complet}, every $g \in \mathcal{U}$ has the weak $su-$intersecting property in $\per(g)$. So, $f \in \intt_{C^{1}}(\mathcal{WIP}_{h})$.

We are ready to finalize the proof of Proposition~\ref{theo-c1-interior} since, by Corollary~\ref{cor-pci-chaintransitive-anosov}, a diffeomorphism $f$ belongs in $\intt_{C^{1}}(\mathcal{WIP}_{h}) = \intt_{C^{1}}(\mathcal{PCI}) \cap  \mathcal{CT}$ if and only if it is a topologically transitive Anosov diffeomorphism. 
\end{proof}

\begin{corollary}\label{cor-c1-interior-HMP} 
$$\intt_{C^{1}} (\mathcal{GL})= \intt_{C^{1}} (\mathcal{TLS}) = \intt_{C^{1}} (\mathcal{SPE})= \intt_{C^{1}} (\mathcal{IP}) = \intt_{C^{1}} (\mathcal{IP}_{h}) =  \intt_{C^{1}} (\mathcal{WIP}_{h}).$$
\end{corollary}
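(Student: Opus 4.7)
The plan is to identify all six $C^1$ interiors with the space $\mathcal{TA}$ of topologically transitive Anosov diffeomorphisms, pivoting on the equality $\intt_{C^{1}}(\mathcal{WIP}_{h}) = \mathcal{TA}$ already supplied by Proposition~\ref{theo-c1-interior}. The forward inclusion $\mathcal{TA} \subseteq \intt_{C^{1}}(\mathcal{X})$ for every set $\mathcal{X}$ in the list is immediate: $\mathcal{TA}$ is $C^1$ open, and Theorem~\ref{theo-aux-complet} guarantees that a topologically transitive Anosov diffeomorphism satisfies each of the six properties. It remains to establish the reverse inclusions $\intt_{C^{1}}(\mathcal{X}) \subseteq \mathcal{TA}$.

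Four of these five non-pivot cases follow from set-theoretic inclusions already available in the paper. The remark after Theorem~\ref{theo-aux-complet} (equation~\eqref{eq001}) yields $\mathcal{TLS} \subseteq \mathcal{IP}$; the inclusion $\mathcal{IP} \subseteq \mathcal{IP}_{h}$ is trivial, since the $su$-intersecting property on $M$ restricts to the $su$-intersecting property on the subset $\per_{h}(f) \subseteq M$; and $\mathcal{IP}_{h} \subseteq \mathcal{WIP}_{h}$ holds because $W^{s}_{f}(a) \cap W^{u}_{f}(b) \subseteq W^{s}_{f}(\overline{\mathcal{O}_{f}(a)}) \cap W^{u}_{f}(\overline{\mathcal{O}_{f}(b)})$. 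Likewise, the second remark at the end of Section~\ref{se:proof-thmC}, which quotes \cite[Lemma~2.1]{SSY10}, records $\mathcal{SPE} \subseteq \mathcal{WIP}_{h}$. Passing to $C^1$ interiors, we obtain
\[
\intt_{C^{1}}(\mathcal{TLS}) \subseteq \intt_{C^{1}}(\mathcal{IP}) \subseteq \intt_{C^{1}}(\mathcal{IP}_{h}) \subseteq \intt_{C^{1}}(\mathcal{WIP}_{h}) = \mathcal{TA}
\qquad \text{and} \qquad
\intt_{C^{1}}(\mathcal{SPE}) \subseteq \intt_{C^{1}}(\mathcal{WIP}_{h}) = \mathcal{TA}.
\]

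The main obstacle is the case of $\mathcal{GL}$, because Lemma~\ref{lemma:gluing-bary} only delivers the barycenter property in $\per_{h}(f)$, and this upgrades to the weak $su$-intersecting property solely when all periodic points are hyperbolic. The plan is to mimic the argument of Lemma~\ref{interior-wip-interior-pci}. I take $f \in \intt_{C^{1}}(\mathcal{GL})$ with a $C^1$ neighborhood $\mathcal{U} \subseteq \mathcal{GL}$, and show that $f$ must be a star diffeomorphism by contradiction: otherwise Lemma~\ref{index-function} produces some $h \in \mathcal{U}$ with two hyperbolic periodic points of distinct indices, which by the $C^1$ density of $\mathcal{KS}$ can be further perturbed to $\tilde{h} \in \mathcal{U} \cap \mathcal{KS}$ retaining two such points. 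Since $\tilde{h} \in \mathcal{GL}$, Lemma~\ref{lemma:gluing-bary} gives the barycenter property in $\per_{h}(\tilde{h}) = \per(\tilde{h})$; by the equivalence recalled in the introduction for diffeomorphisms all of whose periodic points are hyperbolic, $\tilde{h}$ then has the weak $su$-intersecting property in $\per(\tilde{h})$; and Corollary~\ref{HMP-KS-lemma} now forces every pair of periodic points of $\tilde{h}$ to share the same index, a contradiction. Hence $f$ is a star diffeomorphism, so $\Omega$-stable by Hayashi's theorem, and the implication $(13) \Rightarrow (6)$ of Theorem~\ref{theo-aux-complet} (applicable since $\Omega$-stability is now in hand and $f \in \mathcal{GL}$) yields $f \in \mathcal{TA}$. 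This closes the last inclusion and pins the entire six-way chain of equalities to $\mathcal{TA}$.
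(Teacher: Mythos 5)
Your argument is correct, and for five of the six interiors it tracks the paper's proof closely: you take the set inclusions $\mathcal{TLS} \subseteq \mathcal{IP} \subseteq \mathcal{IP}_h \subseteq \mathcal{WIP}_h$ (from \eqref{eq001} together with elementary containments) and $\mathcal{SPE} \subseteq \mathcal{WIP}_h$ (from \cite[Lemma~2.1]{SSY10}), pass to $C^1$ interiors, identify $\intt_{C^{1}}(\mathcal{WIP}_{h})$ with the space of topologically transitive Anosov diffeomorphisms via Proposition~\ref{theo-c1-interior}, and close the loop with Theorem~\ref{theo-aux-complet} and the $C^1$ openness of that space. The genuine divergence concerns $\mathcal{GL}$. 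The paper disposes of it in one line by asserting $\intt_{C^{1}}(\mathcal{GL}) \subseteq \intt_{C^{1}}(\mathcal{WIP}_{h})$ directly from Lemma~\ref{lemma:gluing-bary}; this silently uses the fact that the barycenter property on $\per_h(f)$ promotes to the weak $su$-intersecting property on $\per_h(f)$. That promotion is in fact available without any global hypothesis: as discussed after Definition~\ref{BP-definition}, it only needs the local stable and unstable sets to sit inside the global ones at the two periodic points $p,q$ in question, a property guaranteed by their hyperbolicity alone and independent of the nature of the other periodic orbits. So the concern that motivated your detour -- that the upgrade works only when \emph{all} periodic points are hyperbolic -- is overcautious, and the paper's one-liner is sound. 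That said, your alternative route is valid and has its own merit: by re-running the star-diffeomorphism perturbation argument of Lemma~\ref{interior-wip-interior-pci} inside the open $\mathcal{GL}$-neighborhood and landing in $\mathcal{KS}$ (where $\per = \per_h$ and the barycenter/weak-$su$ equivalence is unambiguous from the paper's own discussion), you make explicit the implicit step in the paper's proof and arrive at $\Omega$-stability directly, after which Theorem~\ref{theo-aux-complet} finishes. The cost is a second invocation of the Kupka-Smale machinery; the benefit is a self-contained treatment of the $\mathcal{GL}$ case that does not lean on the unstated promotion lemma.
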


\begin{proof}
Firstly, by \eqref{eq001}, \cite[Lemma~2.1]{SSY10} and Lemma \ref{lemma:gluing-bary}, one has 
\begin{eqnarray}\label{eq:in}
\intt_{C^{1}} (\mathcal{TLS}) \,\subseteq\, \intt_{C^{1}} (\mathcal{IP}) &\subseteq & \intt_{C^{1}} (\mathcal{IP}_{h}) \,\subseteq\, \intt_{C^{1}} (\mathcal{WIP}_{h}) \nonumber \\
\intt_{C^{1}} (\mathcal{SPE}) &\subseteq &  \intt_{C^{1}} (\mathcal{WIP}_{h}) \nonumber\\
\intt_{C^{1}} (\mathcal{GL}) & \subseteq &  \intt_{C^{1}} (\mathcal{WIP}_{h}).
\end{eqnarray}
Now, take $f \in  \intt_{C^{1}} (\mathcal{WIP}_{h})$ and  $\mathcal{U}$ a $C^1$ open neighborhood of  $f$ contained in  $ \mathcal{WIP}_{h}$. By Proposition~\ref{theo-c1-interior}, every $g \in \mathcal{U}$ is a topologically transitive Anosov diffeomorphism. In particular, every $g \in \mathcal{U}$ is $\Omega-$stable and has the weak $su-$intersecting property in $\per(g)$. Thus, by Theorem~\ref{theo-aux-complet}, every $g \in \mathcal{U}$ has the two-sided limit shadowing property, the specification property and the gluing orbit property. Hence, 
$$f \in \intt_{C^{1}} (\mathcal{TLS}) \,\cap \, \intt_{C^{1}} (\mathcal{SPE}) \,\cap \, \intt_{C^{1}} (\mathcal{GL}).$$
Thus, 
\begin{equation}\label{eq:out}
\intt_{C^{1}} (\mathcal{WIP}_{h}) \,\subseteq \,\intt_{C^{1}} (\mathcal{TLS}) \, \cap \, \intt_{C^{1}} (\mathcal{SPE}) \, \cap \,  \intt_{C^{1}} (\mathcal{GL}).
\end{equation}
From the inclusions in \eqref{eq:in} and \eqref{eq:out}, we conclude that all those sets are equal.
\end{proof}

\subsubsection{\emph{\textbf{Generic elements of $\mathcal{WIP}_{h}$}}}\label{sse:generic}

In \cite[Theorem~C]{C15}, B. Carvalho showed that $C^1$ generically if a diffeomorphism $f \colon M \to M$ satisfies the two-sided limit shadowing property then $f$ is a topologically transitive Anosov diffeomorphism. In \cite[Corollary~1.2]{LT15}, K. Lee and K. Tajbakhsh established that $C^1$ generically if $f$ has the specification property then $f$ is a topologically transitive Anosov diffeomorphism. In \cite{L16-2}, M. Lee proved  that $C^1$ generically if $f$ has the barycenter property in $\per(f)$ then it is $\Omega-$stable. We improve these results as follows.

\begin{proposition}\label{c1-generic}
Let $\mathcal{R} =  \mathcal{KS} \cap \mathcal{I}$ be the set defined in \eqref{def:R}. If $f\in \mathcal{R}\, \cap\, \mathcal{WIP}_{h}$, then $f$ is a topologically transitive Anosov diffeomorphism. 
\end{proposition}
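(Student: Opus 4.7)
The plan is to reduce the statement to a direct application of Theorem~\ref{theo-aux-complet} by first promoting $f$ to an $\Omega$-stable diffeomorphism. To begin, take $f \in \mathcal{R} \cap \mathcal{WIP}_{h}$. Since $f \in \mathcal{KS}$, every periodic point of $f$ is hyperbolic, so $\per(f) = \per_{h}(f)$ and the weak $su$-intersecting property in $\per_{h}(f)$ is exactly the weak $su$-intersecting property in $\per(f)$. Applying Corollary~\ref{HMP-KS-lemma}, I obtain that all periodic points of $f$ share the same index; equivalently, $f \in \mathcal{PCI}$.

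The main step is then to show that $f$ is $\Omega$-stable. Since $f$ lies in $\mathcal{R} \cap \mathcal{PCI}$ and $\mathcal{R} = \mathcal{KS} \cap \mathcal{I}$ by \eqref{def:R}, Proposition~\ref{prop-generic-pci} already packages the conclusion: $f$ is $\Omega$-stable. I would recall the mechanism of that argument, which I regard as the crux, since it is where the set $\mathcal{I}$ is used: were $f$ not a star diffeomorphism, Lemma~\ref{index-function} would provide an arbitrarily small $C^{1}$ perturbation of $f$ exhibiting two hyperbolic periodic points of distinct indices; the robustness property encoded in $\mathcal{I}$ (Lemma~\ref{b.carvalho-lemma}) would then force $f$ itself to possess such a pair, contradicting $f \in \mathcal{PCI}$. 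Hayashi's theorem \cite{H92} completes the promotion from star to $\Omega$-stable.

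With $f$ now $\Omega$-stable and satisfying the weak $su$-intersecting property in $\per(f)$, condition $(4)$ of Theorem~\ref{theo-aux-complet} is fulfilled, and the implication $(4) \Rightarrow (6)$ of that theorem yields that $f$ is a topologically transitive Anosov diffeomorphism. The only nontrivial ingredient beyond the quoted results is the observation that $\mathcal{KS}$ transports $\mathcal{WIP}_{h}$ into $\mathcal{WIP}$ on $\per(f)$; everything else is an assembly of Corollary~\ref{HMP-KS-lemma}, Proposition~\ref{prop-generic-pci}, and Theorem~\ref{theo-aux-complet}.
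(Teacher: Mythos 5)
Your proof is correct and follows essentially the same route as the paper: Corollary~\ref{HMP-KS-lemma} to place $f$ in $\mathcal{PCI}$, Proposition~\ref{prop-generic-pci} to upgrade to $\Omega$-stability, and then Theorem~\ref{theo-aux-complet} to conclude. The only cosmetic difference is that the paper first packages these steps into the auxiliary set equality $\mathcal{R}\cap\mathcal{WIP}_h=\mathcal{R}\cap\mathcal{PCI}\cap\mathcal{CT}$ (Lemma~\ref{le:aux}), which it reuses in Corollary~\ref{cor-c1-generic}, whereas you apply the implication $(4)\Rightarrow(6)$ of Theorem~\ref{theo-aux-complet} directly.
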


\begin{proof} We start by establishing that, generically, $\mathcal{WIP}_{h} = \mathcal{PCI} \cap \mathcal{CT}$.

\begin{lemma}\label{le:aux} $\mathcal{R}\, \cap\, \mathcal{WIP}_{h} = \mathcal{R}\, \cap\, \mathcal{PCI} \,\cap\, \mathcal{CT}$.
\end{lemma}

\begin{proof}
\noindent $(\subseteq)$ Take $f \in \mathcal{R}\, \cap\, \mathcal{WIP}_{h}$, that is, $f\in \mathcal{KS}$ and $f$ has the weak $su-$intersecting property in $\per(f)$. By Corollary \ref{HMP-KS-lemma}, all periodic points of $f$ are hyperbolic  and have the same index. Thus, $f \in \mathcal{PCI}$. By Proposition~\ref{prop-generic-pci}, $f$ is $\Omega-$stable. Since, in addition, $f$ has the weak $su-$intersecting property in $\per(f)$, from Theorem~\ref{theo-aux-complet} we deduce that $f$ is chain-transitive. Hence, $f \in \mathcal{R}\, \cap\, \mathcal{PCI} \, \cap \, \mathcal{CT}$.

\noindent $(\supseteq)$ Suppose that $f \in \mathcal{R}\, \cap\, \mathcal{PCI} \cap \mathcal{CT}$. By Proposition \ref{prop-generic-pci}, $f$ is $\Omega-$ stable. Thus, $f$ is  chain-transitive. Therefore, by Theorem~\ref{theo-aux-complet}, $f$ has the weak $su-$intersecting property in $\per(f)$.
\end{proof}

Let us end the proof of Proposition~\ref{c1-generic}. Suppose that $f\in \mathcal{R}\, \cap\, \mathcal{WIP}_{h}$. By  Lemma~\ref{le:aux}, $f \in \mathcal{R}\, \cap\, \mathcal{PCI}\, \cap\, \mathcal{CT}$. Consequently, by Proposition~\ref{prop-generic-pci}, $f$ is $\Omega-$ stable. Since $f$ is also chain-transitive, Theorem~\ref{theo-aux-complet} ensures that $f$ is a topologically transitive Anosov diffeomorphism. 
\end{proof}

\begin{corollary}\label{cor-c1-generic}
For every $f \in \mathcal{R}$, the following assertions are equivalent:
\begin{itemize}
\item[$(1)$]  $f \in \mathcal{TLS}$.
\item[$(2)$]  $f \in \mathcal{IP}$.
\item[$(3)$]  $f \in \mathcal{IP}_h$.
\item[$(4)$]  $f \in \mathcal{WIP}_h$.
\item[$(5)$]  $f \in \mathcal{PCI}\,\cap\, \mathcal{CT}$.
\item[$(6)$]  $f$ is a topologically transitive Anosov diffeomorphism.
\item[$(7)$]  $f \in \mathcal{SPE}$.
\item[$(8)$]  $f \in \mathcal{CT}$ and has the shadowing property.
\item[$(9)$]  $f \in \mathcal{GL}$.
\end{itemize}
\end{corollary}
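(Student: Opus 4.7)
The plan is to establish all equivalences by showing that each of the nine items is equivalent to $(6)$. The easy direction, $(6)\Rightarrow (i)$ for each $i$, follows from Theorem~\ref{theo-aux-complet} together with Corollary~\ref{cor-index-constant}: a topologically transitive Anosov diffeomorphism is $\Omega$-stable, so all the equivalent conditions of that theorem apply, yielding $(1)$, $(2)$, $(3)$, $(4)$, $(7)$ and $(9)$; $(5)$ holds because $\mathcal{PCI}$ membership is ensured by Corollary~\ref{cor-index-constant} and transitivity is stronger than chain-transitivity; and $(8)$ follows since Anosov implies shadowing.

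For the reverse direction, the main engine is Proposition~\ref{c1-generic}, which says that $\mathcal{R}\cap \mathcal{WIP}_h$ is contained in the set of topologically transitive Anosov diffeomorphisms. I would reduce each of $(1)$, $(2)$, $(3)$, $(7)$, $(9)$ to $(4)$ via the following inclusions: $\mathcal{TLS}\subseteq \mathcal{IP}\subseteq \mathcal{IP}_h\subseteq \mathcal{WIP}_h$ (where the first is \cite[Lemma~2.4]{C15} and the remaining two are trivial), $\mathcal{SPE}\subseteq \mathcal{WIP}_h$ (by \cite[Lemma~2.1]{SSY10}), and $\mathcal{GL}\subseteq \mathcal{WIP}_h$ (by Lemma~\ref{lemma:gluing-bary} combined with the fact that $\mathcal{R}\subseteq \mathcal{KS}$, so that $\per(f)=\per_h(f)$ and the barycenter property in $\per(f)$ coincides with the weak $su$-intersecting property in $\per_h(f)$). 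Once any of these items lands us in $\mathcal{R}\cap\mathcal{WIP}_h$, Proposition~\ref{c1-generic} delivers $(6)$.

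For $(5)\Rightarrow (6)$ I would invoke Lemma~\ref{le:aux} directly, which asserts $\mathcal{R}\cap \mathcal{PCI}\cap \mathcal{CT}=\mathcal{R}\cap \mathcal{WIP}_h$, so again Proposition~\ref{c1-generic} applies. For $(8)\Rightarrow (6)$ I would first use \cite[Lemma~2.1]{L12}, according to which a chain-transitive homeomorphism with the shadowing property is topologically transitive, and then invoke Lemma~\ref{lemma:sh+tr+connct-gluing} to conclude that $f$ satisfies the gluing orbit property. This reduces $(8)$ to $(9)$, which has already been handled in the previous paragraph.

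The main obstacle I anticipate is the implication $(8)\Rightarrow (6)$: the raw hypothesis of chain-transitivity together with shadowing is weaker than the other conditions and conveys nothing about hyperbolicity of the periodic points. The argument must therefore upgrade it to the gluing orbit property (via Lemma~\ref{lemma:sh+tr+connct-gluing}), then to the barycenter property in $\per(f)$ (via Lemma~\ref{lemma:gluing-bary}), then to the weak $su$-intersecting property in $\per_h(f)$ (using that $\per(f)=\per_h(f)$ for $f\in \mathcal{KS}$), and finally invoke Proposition~\ref{c1-generic} to secure $\Omega$-stability and topological transitivity of an Anosov diffeomorphism. The assumption $f\in\mathcal{R}$ is what allows this chain of upgrades to close; without it the argument breaks down.
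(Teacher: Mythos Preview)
Your proposal is correct and follows essentially the same approach as the paper: the key ingredients are the chain of inclusions $\mathcal{TLS}\subseteq\mathcal{IP}\subseteq\mathcal{IP}_h\subseteq\mathcal{WIP}_h$, the reductions $\mathcal{SPE}\subseteq\mathcal{WIP}_h$ and $\mathcal{GL}\subseteq\mathcal{WIP}_h$, and then Proposition~\ref{c1-generic} (together with Lemma~\ref{le:aux}) to pass from $\mathcal{WIP}_h$ to the Anosov conclusion, while the forward implications from $(6)$ all come from Theorem~\ref{theo-aux-complet}. The only cosmetic difference is that the paper organizes the argument as a main cycle $(1)\Rightarrow\cdots\Rightarrow(6)\Rightarrow(1)$ with side branches, and for $(8)$ it passes through $(7)$ (via topological mixing and \cite[Theorem~45]{KLO16}) rather than through $(9)$ as you do; both routes are equally valid.
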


\begin{proof}

\noindent $(1) \Rightarrow (2) \Rightarrow (3) \Rightarrow (4)$  See \eqref{eq001}.

\noindent $(4) \Rightarrow (5)$ Consider $f \in \mathcal{R}\, \cap\, \mathcal{WIP}_{h}$. By Proposition~\ref{c1-generic}, one has $ \mathcal{R}\, \cap\, \mathcal{WIP}_{h} =  \mathcal{R}\, \cap\, \mathcal{PCI} \cap \mathcal{CT}$. So, $f \in \mathcal{R}\, \cap\, \mathcal{PCI} \cap \mathcal{CT}$.

\noindent $(5) \Rightarrow (6)$ This is Corollary~\ref{generic-pci-chain-transitive}.

\noindent  $(6) \Rightarrow (1)$ Let $f$ be a topologically transitive Anosov diffeomorphism. So $f$ is an expansive topologically transitive diffeomorphism that has the shadowing property. Since $M$ is connected, $f$ is topologically mixing. By \cite[Theorem~45]{KLO16}, $f$ has the specification property. Thus, according to Proposition~\ref{theo-Homeo-hyperbolicmeet}, $f$ has the two-sided limit shadowing property.

\noindent $(6) \Rightarrow (7)$ This was established in the implication $(6) \Rightarrow (7)$ of Theorem~\ref{theo-aux-complet}.

\noindent  $(7) \Rightarrow (4)$ Assume that $f$ has the specification property. By \cite[Lemma~2.1]{SSY10}, we know that $f$ has the weak $su-$intersecting property in $\per_{h}(f)$.

\noindent  $(6) \Rightarrow (8)$ This is clear.

\noindent  $(8) \Rightarrow (7)$ Suppose that $f$ is a chain-transitive diffeomorphism which satisfies the shadowing property. Then $f$ is topologically transitive, since chain-transitivity together with the shadowing property imply topological transitivity. Therefore, as $M$ is connected and $f$ has the shadowing property, we conclude that $f$ is topologically mixing. By \cite[Theorem~45]{KLO16}, $f$ has the specification property.

\noindent  $(6) \Rightarrow (9)$ Suppose that $f$ is a topologically transitive Anosov diffeomorphism. So, besides being topologically transitive, $f$ has the  shadowing property. Since $M$ is connected, $f$ has the gluing orbit property by Lemma~\ref{lemma:sh+tr+connct-gluing}.

\noindent  $(9) \Rightarrow (4)$ Suppose that $f$ has the gluing orbit property. By Lemma \ref{lemma:gluing-bary}, $f$ has the barycenter property in $A$ for every nonempty $A$ of $M$. Since $\mathcal{R} \subseteq \mathcal{KS}$, we have $\emptyset \neq \per(f) = \per_h(f)$. Thus, $f$ has the barycenter property in $\per_h(f)$, and so it has the weak $su-$intersecting property in $\per(f)$.
\end{proof}


\subsection{Application 4}

The proof of Theorem~\ref{theo-aux-complet} ensures, without the assumption of $\Omega-$stability, that
\begin{eqnarray*} 
\text{specification property} &\Rightarrow &  \text{almost specification property} \\
&\Rightarrow & \text{asymptotic average shadowing property} \\
&\Rightarrow & \text{average shadowing property} \\
&\Rightarrow &  \text{chain-transitivity}
\end{eqnarray*}
and it is clear that one always has
$$\text{topological transitivity} \quad \Rightarrow \quad \text{chain-transitivity}.$$ 
As regards these properties, in \cite{SA00}, K. Sakai showed that the $C^{1}$ interior of the set of diffeomorphisms of a compact surface  satisfying the average shadowing property is equal to the set of topologically transitive Anosov diffeomorphisms. On the other hand, in \cite{M82}, R. Ma\~n\'e proved that every $C^{1}-$robustly transitive diffeomorphism of a compact surface is a topologically transitive Anosov diffeomorphism. 
As a consequence of Theorem~\ref{theo-aux-complet} together with Corollary~\ref{cor-index-constant}, we provide a unique proof for both these   results.

\begin{proposition}\label{theo-surface}
The $C^{1}$ interior of the set of chain-transitive diffeomorphisms of a compact surface is made of topologically transitive Anosov diffeomorphisms.
\end{proposition}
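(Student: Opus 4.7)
The plan is to combine the chain-transitivity hypothesis with a star-diffeomorphism argument to reduce to the $\Omega-$stable setting, and then invoke Theorem~\ref{theo-aux-complet}. Fix $f \in \intt_{C^{1}}(\mathcal{CT})$ and a $C^{1}$ neighborhood $\mathcal{U}$ of $f$ contained in $\mathcal{CT}$.

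The first key step will be to establish that, because every $g \in \mathcal{U}$ is chain-transitive on the surface $M$, every hyperbolic periodic point of every $g \in \mathcal{U}$ must have index $1$. Suppose, towards a contradiction, that some $g \in \mathcal{U}$ had a hyperbolic attracting periodic orbit $\mathcal{O}(p)$. Standard hyperbolic theory provides a trapping region $U \subsetneq M$ (a fundamental neighborhood of $\mathcal{O}(p)$) with $g(\overline{U}) \subset \interior(U)$. Choosing $\delta < \dist(g(\overline{U}), M \setminus U)$, every $\delta-$chain starting in $U$ remains in $U$, so no $\delta-$chain from $p$ can reach a point of $M \setminus U$, contradicting the chain-transitivity of $g$. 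The same reasoning applied to $g^{-1}$ rules out hyperbolic sources. Since $\dim M = 2$, the index of a hyperbolic periodic point belongs to $\{0,1,2\}$, so the only possibility left is index~$1$.

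Next I would show that $f$ is a star diffeomorphism. Otherwise, some $g \in \mathcal{U}$ has a non-hyperbolic periodic point, and Lemma~\ref{index-function} provides $h$ arbitrarily $C^{1}-$close to $g$ (hence in $\mathcal{U}$) with two hyperbolic periodic points $P_{1}, P_{2}$ satisfying $\ind(P_{1}) \neq \ind(P_{2})$. But $h \in \mathcal{U} \subseteq \mathcal{CT}$, so by the previous step both indices must equal $1$, a contradiction. By Hayashi's theorem \cite{H92}, $f$ is therefore $\Omega-$stable. Since $f$ is also chain-transitive, the equivalence $(11) \Leftrightarrow (6)$ in Theorem~\ref{theo-aux-complet} forces $f$ to be a topologically transitive Anosov diffeomorphism.

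The main technical point is the first step: it amounts to the well-known fact that chain-transitive systems admit no proper attractors or repellers, adapted here to a hyperbolic attracting periodic orbit via a trapping neighborhood of the whole orbit. Once this is in place, the remaining steps are direct applications of Lemma~\ref{index-function}, Hayashi's characterization of $\Omega-$stability, and the equivalences already compiled in Theorem~\ref{theo-aux-complet}.
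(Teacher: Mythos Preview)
Your proof is correct and in fact more streamlined than the paper's. Both arguments share the same skeleton: show that $f$ is a star diffeomorphism by contradiction via Lemma~\ref{index-function}, then apply Hayashi's theorem and Theorem~\ref{theo-aux-complet}. The difference lies in how the contradiction is reached. You exploit directly that on a surface the index of a hyperbolic periodic point lies in $\{0,1,2\}$, and that chain-transitivity forbids sinks and sources (your trapping-region argument, which is essentially the paper's Lemma~\ref{chain-lemma-sinks-sources}); hence any $h \in \mathcal{U}$ has all hyperbolic periodic points of index~$1$, contradicting Lemma~\ref{index-function} immediately. The paper instead perturbs the offending $h$ into the generic sets $\mathcal{Z}$ (from Ma\~n\'e's Corollary~II) and $\mathcal{I}$, uses the absence of sinks and sources to place $\tilde h$ in the $\Omega$-stable branch of Ma\~n\'e's trichotomy, upgrades $\tilde h$ to a transitive Anosov via Theorem~\ref{theo-aux-complet}, and only then invokes constancy of the index (Corollary~\ref{cor-index-constant}) to obtain the contradiction. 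Your route avoids Ma\~n\'e's deep dichotomy result entirely and is the more economical of the two; the paper's detour, while correct, does not seem to buy any extra generality here since both arguments are already pinned to dimension~$2$.
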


\begin{proof} Our argument is inspired by the proof of \cite[Theorem~2]{L18}. Let $M$ be a compact surface and $f\colon M \to M$ be a $C^1$ diffeomorphism in the interior of the set of chain-transitive diffeomorphisms of $M$. We claim that $f$ is a star diffeomorphism. 

Suppose, otherwise, that $f$ is not star. Let $\mathcal{U}$ be a $C^1$ neighborhood of $f$ contained in the set of chain-transitive diffeomorphisms of $M$. Then there exist $g\in\mathcal{U}$ and a non-hyperbolic periodic point $p$ of $g$. In addition, by Lemma~\ref{index-function}, there exist $h\in\mathcal{U}$ and two distinct hyperbolic periodic points $p_1,p_2$ of $h$ with different indices. 

\begin{lemma}\label{cor-mane}\cite[Corollary~II]{M82}
If $M$ is a compact surface, then there exists a residual subset $\mathcal{Z} \subseteq \diff^{1}(M)$ such that every $f \in \mathcal{Z}$ satisfies one of the following properties:
\begin{itemize}
\item[$(a)$] $f$ has infinitely many sinks.
\item[$(b)$] $f$ has infinitely many sources.
\item[$(c)$] $f$ is $\Omega-$stable.
\end{itemize}
\end{lemma}

According to Lemma~\ref{cor-mane} and Lemma~\ref{b.carvalho-lemma}, we may perturb $h$ to obtain $\tilde{h}\in\mathcal{U} \cap \mathcal{Z} \cap \mathcal{I}$ such that $\tilde{h}$ has two distinct hyperbolic periodic points $\tilde{p}_1, \tilde{p}_2$ with different indices. Moreover, as $\tilde{h} \in \mathcal{U}$, then $\tilde{h} $ is chain-transitive.

\begin{lemma}\label{chain-lemma-sinks-sources}
If a diffeomorphism is chain-transitive then it has neither sinks nor sources.
\end{lemma}

\begin{proof}
Let $H$ be a chain-transitive diffeomorphism. Then $H^{-1}$ is also chain-transitive (cf. \cite{W16}). Moreover, $H$ has no sinks (cf.  \cite[Lemma~2.1]{L15}). Since a source of $H$ is a sink of $H^{-1}$, we deduce that $H$ has no sources as well.
\end{proof}

Let us resume the proof of Proposition~\ref{theo-surface}. As $\tilde{h} $ is chain-transitive, by Lemma~\ref{chain-lemma-sinks-sources} we know  that $\tilde{h}$ does not have sinks or sources. Therefore, as $\tilde{h} \in \mathcal{Z}$, Lemma~\ref{cor-mane} yields that $\tilde{h}$ is $\Omega-$stable. Thus, Theorem~\ref{theo-aux-complet} ensures that $\tilde{h}$ is a topologically transitive Anosov diffeomorphism. Hence, by Corollary~\ref{cor-index-constant}, for any pair of hyperbolic periodic points $p$ and $q$ of $\tilde{h}$ one has $\ind (p)=\ind (q)$. Yet,  this contradicts the fact that $\tilde{h}$ has two hyperbolic periodic points $\tilde{p}_1, \tilde{p}_2$ with different indices. Consequently, $f$ is $\Omega-$stable. Since $f$ is also chain-transitive, Theorem~\ref{theo-aux-complet} implies that $f$ is a topologically transitive Anosov diffeomorphism. 
\end{proof}

\begin{remark}
When the dimension of the manifold is equal to $3$ or $4$ it is known that a $C^{1}-$robustly transitive diffeomorphism may not be an Anosov diffeomorphism \cite{M78,S71}. On the other hand, in \cite{AR11}, A. Arbieto and R. Ribeiro showed that the $C^1$  interior of the set of flows with the asymptotic average shadowing property (or with the average shadowing property) on a $3-$dimensional smooth closed manifold is formed by topologically transitive Anosov flows.
\end{remark}

 
\subsection{Application 5}

Let $M$ be a smooth closed Riemannian manifold and $\Diff_{m} (M)$ denote the set of diffeomorphisms which preserve the Lebesgue measure $m$. We endow $\Diff_{m} (M)$ with the $C^{1}$ topology. In this subsection we shall prove that a volume-preserving diffeomorphism has a constant index within the set of hyperbolic periodic points if and only if it a transitive  Anosov diffeomorphism. 

Denote by $\mathcal{F}^{1}_{m} (M)$ the set of \emph{star diffeomorphisms} in $\Diff_{m}(M)$, that is, those $f \in \Diff_{m}(M)$  which have a $C^{1}$ neighborhood $U \subseteq \Diff_{m}(M)$ such that, if $g \in U$, then any periodic point of $g$ is hyperbolic.

\begin{proposition}
The following assertions are equivalent regarding $f \in \Diff_{m}(M)$:
\begin{itemize}
\item[$(a)$] $f$ is an Anosov diffeomorphism.
\item[$(b)$] $f \in \mathcal{F}^{1}_{m} (M)$.
\item[$(c)$] There exist a $C^{1}$ neighborhood $U \subseteq \Diff_{m}(M)$ of $f$ and a positive integer $I_0$ such that, for every diffeomorphism $g \in U$ and every hyperbolic periodic $p$ of $g$, the index of $p$ with respect to $g$ is $I_0$.
\end{itemize}
\end{proposition}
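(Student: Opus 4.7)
The plan is to establish the cyclic implications $(a) \Rightarrow (c) \Rightarrow (b) \Rightarrow (a)$, which delivers all three equivalences at once.

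First, for $(a) \Rightarrow (c)$, I would invoke the $C^1$-openness of Anosov diffeomorphisms within $\Diff_{m}(M)$ together with the continuity of the hyperbolic splitting. One obtains a $C^1$ neighborhood $U \subseteq \Diff_{m}(M)$ of $f$ consisting entirely of Anosov diffeomorphisms whose stable bundle has constant dimension $I_0 := \dim E^s_f$. For any $g \in U$, every periodic point $p$ of $g$ is automatically hyperbolic, the tangent space of its stable manifold coincides with $E^s_g(p)$, and hence $\ind(p) = I_0$.

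Next, for $(c) \Rightarrow (b)$, suppose (c) holds with neighborhood $U$ and integer $I_0$ and assume by contradiction that $f \notin \mathcal{F}^{1}_{m}(M)$. Then, shrinking $U$ if necessary, some $g \in U$ admits a non-hyperbolic periodic point $p$. I would invoke the conservative analogue of Lemma~\ref{index-function}, namely a volume-preserving Franks-type perturbation in the spirit of Bonatti--D\'{\i}az--Pujals or Horita--Tahzibi, to produce $h \in U$ carrying two hyperbolic periodic points with distinct indices. This contradicts (c), and so (b) holds.

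Finally, for $(b) \Rightarrow (a)$, which is the heart of the matter and the main obstacle: the claim is the conservative star theorem. Since $f$ preserves $m$, Poincar\'e recurrence gives $\Omega(f) = M$, and Ma\~n\'e's $C^1$ ergodic closing lemma in the conservative setting then makes periodic orbits dense in $M$. The star assumption provides uniform hyperbolicity bounds along every periodic orbit of $f$ and of its perturbations, and the Liao--Ma\~n\'e machinery, adapted to volume-preserving dynamics, promotes these bounds to a dominated splitting on the closure of periodic orbits and ultimately to a uniformly hyperbolic splitting of $TM$. Hence $f$ is Anosov. The hardest step is precisely quoting or reconstructing this conservative star theorem; a secondary delicate point, already present in $(c) \Rightarrow (b)$, is ensuring that both hyperbolic periodic points of different indices produced by the perturbation remain within the same $C^1$ neighborhood $U$ of $f$ in $\Diff_{m}(M)$ (rather than in the wider $\Diff(M)$).
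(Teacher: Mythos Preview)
Your proposal is correct and uses the same essential ingredients as the paper, but organized into a different cycle. The paper runs $(a)\Rightarrow(b)\Rightarrow(c)\Rightarrow(a)$: the first implication is immediate; for $(b)\Rightarrow(c)$ it quotes \cite[Proposition~3.1]{ArCa13}; and for $(c)\Rightarrow(a)$ it argues by contradiction, invoking the contrapositive of the conservative star theorem \cite[Theorem~1.1]{ArCa13} to get a nearby non-hyperbolic periodic point and then \cite[Lemma~5.1]{ArCa13} to split it into two hyperbolic periodic points of different indices. Your cycle $(a)\Rightarrow(c)\Rightarrow(b)\Rightarrow(a)$ separates these two citations cleanly: the Franks-type perturbation sits in $(c)\Rightarrow(b)$ and the conservative star theorem alone handles $(b)\Rightarrow(a)$. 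Your $(a)\Rightarrow(c)$ via openness of the Anosov condition is more elementary than the paper's $(b)\Rightarrow(c)$, which relies on an external result. Both routes ultimately rest on the same hard theorem of Arbieto--Catalan, so neither buys a genuine saving over the other; your organization is arguably a bit more transparent. Your worry about keeping the perturbed map inside $U$ is legitimate but routine: the conservative Franks lemma (which is exactly what \cite[Lemma~5.1]{ArCa13} provides) yields arbitrarily $C^1$-small perturbations, so $h$ can be taken in any prescribed neighborhood.
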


\begin{proof}
$(a) \Rightarrow (b)$ This is clear.

\noindent $(b) \Rightarrow (c)$ This was established in \cite[Proposition~3.1]{ArCa13}.

\noindent $(c) \Rightarrow (a)$ Take $f \in \Diff_{m}(M)$  complying with the condition $(c)$. We claim that $f$ is an Anosov diffeomorphism. Assume otherwise. By \cite[Theorem~1.1]{ArCa13}, $f \notin \mathcal{F}^{1}_{m} (M)$, so there exists $g \in \Diff_{m}(M)$ close enough to $f$ with a non-hyperbolic periodic point $p$. Thus, according to \cite[Lemma~5.1]{ArCa13}, there exists $h \in \Diff_{m}(M)$ which is close enough to $f$ and has two periodic points $q$ and $r$ with different indices. This contradicts $(c)$.
\end{proof}

We may improve the previous proposition in the following generic sense.

\begin{proposition}\label{c1-volume-preserving -generic}
There is a Baire generic set $\mathcal{R}_{m} \subset \Diff_{m}(M)$ such that for every $f \in \mathcal{R}_{m}$ the following assertions are equivalent:
\begin{itemize}
\item[$(1)$]  $f$ has the barycenter property in $\per_{h}(f)$.
\item[$(2)$]  $f$ has the weak $su-$intersecting property in $\per_{h}(f)$.
\item[$(3)$]  For any $\, p, q \in \per_h(f)$, one has $\ind (p)=\ind (q)$.
\item[$(4)$]  $f \in \mathcal{F}^{1}_{m} (M)$.
\item[$(5)$]  $f$ is a transitive Anosov diffeomorphism.
\end{itemize}
\end{proposition}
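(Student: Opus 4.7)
The plan is to take $\mathcal{R}_{m}$ as the intersection of two Baire generic subsets of $\Diff_{m}(M)$: the Kupka--Smale diffeomorphisms $\mathcal{KS}_{m}$, and a volume-preserving analogue $\mathcal{I}_{m}$ of the robust set $\mathcal{I}$ introduced in Lemma~\ref{b.carvalho-lemma}, consisting of those $f$ for which the existence in every $C^{1}$ neighborhood of $f$ of a diffeomorphism carrying two hyperbolic periodic points of distinct indices forces $f$ itself to have such a pair. Both are generic in $\Diff_{m}(M)$ by transversality and by the conservative counterparts of the perturbative tools behind Lemma~\ref{b.carvalho-lemma}. With this choice in hand, I would run essentially the same logical circle as in Proposition~\ref{prop-generic-pci} and Proposition~\ref{c1-generic}, replacing the dissipative inputs by conservative ones drawn from the Arbieto--Catal\'an results already invoked in the preceding proposition.

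The equivalence $(1) \Leftrightarrow (2)$ is automatic once $f \in \mathcal{KS}_{m}$: by hyperbolicity, $\per(f) = \per_{h}(f)$ and the local-to-global comparison between stable/unstable sets highlighted after Definition~\ref{BP-definition} matches the barycenter and the weak $su$-intersecting properties on $\per_{h}(f)$. For $(2) \Rightarrow (3)$, I would essentially repeat Corollary~\ref{HMP-KS-lemma}, using transversality of Kupka--Smale intersections to upgrade each weak $su$-intersection between two hyperbolic periodic points into a homoclinic relation, which forces equality of indices. The implication $(5) \Rightarrow (1)$ is the softest one: a topologically transitive Anosov diffeomorphism is $\Omega$-stable, so Theorem~\ref{theo-aux-complet} applies and yields the weak $su$-intersecting property in $\per(f) = \per_{h}(f)$; an application of \cite[Lemma~2.2]{ST12} then delivers the barycenter property there.

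What remains is the conservative counterpart of Proposition~\ref{prop-generic-pci}, namely $(3) \Rightarrow (4)$, together with $(4) \Rightarrow (5)$. For the former, assuming that $f \in \mathcal{R}_{m}$ fails to be a star diffeomorphism, I would produce $g \in \Diff_{m}(M)$ close to $f$ with a non-hyperbolic periodic point, and then appeal to a volume-preserving version of Lemma~\ref{index-function} to perturb $g$ into some $h \in \Diff_{m}(M)$ possessing two hyperbolic periodic points with distinct indices; membership of $f$ in $\mathcal{I}_{m}$ then transports this pair back to $f$, contradicting $(3)$. The step I expect to demand the most care is precisely the verification that both Lemma~\ref{index-function} and Lemma~\ref{b.carvalho-lemma} admit the needed volume-preserving counterparts, i.e.\ that every intermediate perturbation can be realized inside $\Diff_{m}(M)$. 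For $(4) \Rightarrow (5)$, the preceding proposition identifies $\mathcal{F}^{1}_{m}(M)$ with the set of volume-preserving Anosov diffeomorphisms, Poincar\'e recurrence forces $\Omega(f) = M$, and the disjointness of the basic sets in Smale's spectral decomposition combined with the connectedness of $M$ yields a unique basic set equal to $M$, on which $f$ is automatically transitive.
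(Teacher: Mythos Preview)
Your proposal is correct and follows essentially the same route as the paper: the generic set is indeed $\mathcal{R}_{m} = \mathcal{KS}_{m} \cap \mathcal{J}$, where $\mathcal{J}$ is the volume-preserving analogue of $\mathcal{I}$ (the paper proves this analogue by a direct open-and-dense argument), and the cycle of implications runs exactly as you outline. The conservative version of Lemma~\ref{index-function} that you flag as the delicate point is supplied in the paper by \cite[Lemma~5.1]{ArCa13}, and for $(4)\Rightarrow(5)$ the paper simply cites \cite[Theorem~1.1]{ArCa13} rather than invoking Poincar\'e recurrence and the spectral decomposition, while for $(5)\Rightarrow(1)$ it appeals to Theorem~\ref{maintheo-weak-hmp}(c) directly instead of passing through Theorem~\ref{theo-aux-complet}; both are cosmetic differences.
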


\begin{proof}

We start by establishing an analogue of Lemma~\ref{b.carvalho-lemma} in the context of volume-preserving diffeomorphisms. 

\begin{lemma}\label{b.carvalho-lemma-volume-preserving}
There exists an $C^1$ open and dense set $\mathcal{J}$ of $\Diff_{m}(M)$ such that, if $f \in \mathcal{J}$ and in every $C^1$ neighborhood $\mathcal{U}$ of $f$ some $g\in\mathcal{U}$ has two hyperbolic periodic points with different indices, then $f$ also has two hyperbolic periodic points with different indices.
\end{lemma}

\begin{proof}
Let $\mathcal{A}$ be the set of $C^{1}$ diffeomorphisms which have two hyperbolic periodic points with different indices. The hyperbolicity of these periodic points guarantees that 
$\mathcal{A}$ is open in $\Diff(M)$. So, $\mathcal{A}_{*} = \mathcal{A} \cap \Diff_{m}(M)$ is an open set of $ \Diff_{m}(M)$. Consider the complement $\mathcal{B} = \Diff_{m}(M) \setminus \overline{\mathcal{A}_{*}}^{\Diff_{m}(M)}$. Note that $B$ is also open in $\Diff_{m}(M)$; hence $\mathcal{J} = \mathcal{A}_{*} \cup \mathcal{B}$ is open in $\Diff_{m}(M)$. We claim that $\mathcal{J}$ is also dense in $\Diff_{m}(M)$.  Indeed, if $g \in \Diff_{m}(M) \setminus \mathcal{B}$, then $g \in \overline{\mathcal{A}_{*}}^{\Diff_{m}(M)}$, and so it is approximated by diffeomorphisms of $\mathcal{A}_{*} \subseteq \mathcal{J}$. 

Let $f\in \mathcal{J}$ and suppose that there is a sequence $(f_{n})_{n\, \in\, \mathbb{N}}$ of diffeomorphisms in $\Diff_{m}(M)$ converging to $f$ in the $C^{1}$ topology and such that each $f_{n}$ has two hyperbolic periodic points with different indices. Then $f_{n} \in \mathcal{A}_{*}$ for all $n \in N$. Thus, $f \in \overline{\mathcal{A}_{*}}^{\Diff_{m}(M)}$, so $f \notin \mathcal{B}$. Since $ \mathcal{J} = \mathcal{A}_{*} \cup \mathcal{B}$ and $f \in \mathcal{J}$, we conclude that $f \in  \mathcal{A}_{*}$. That is, $f$ has two hyperbolic periodic points with different indices.
\end{proof}

Denote by $\mathcal{KS}_{m}$ the $C^1$ generic set of Kupka-Smale volume-preserving diffeomorphisms provided by Robinson's theorem (see \cite{Rob70}). The next lemma is a counterpart in the conservative setting of Corollary~\ref{HMP-KS-lemma}, and is proved by a similar argument.

\begin{lemma}\label{HMP-KS-lemma-vol-preserving}
If $f\in \mathcal{KS}_{m}$ and $f$ has the weak $su-$intersecting property in $\per(f)$, then all the periodic points have the same index.
\end{lemma}

Let us resume the proof of Proposition~\ref{c1-volume-preserving -generic}. Define $\mathcal{R}_{m} = \mathcal{KS}_{m} \cap \mathcal{J}$ and take $f\in  \mathcal{R}_{m}$.

\noindent $(1) \Rightarrow (2)$ Suppose that $f$ has the barycenter property in $\per_{h}(f)$. Since $f \in \mathcal{KS}_{m}$, one has $\per_{h}(f)= \per(f)$, and so $f$ satisfies the weak $su-$intersecting property in $\per_{h}(f)$. 

\noindent $(2) \Rightarrow (3)$ Assume that $f$ has the weak $su-$intersecting property in $\per_{h}(f)$. 

Lemma \ref{HMP-KS-lemma-vol-preserving} ensures that all the periodic points have the same index since $f \in \mathcal{KS}_{m}$.

\noindent $(3) \Rightarrow (4)$ Suppose that, for any $\, p, q \in \per_h(f)$, we have $\ind (p)=\ind (q)$. We claim that $f \in \mathcal{F}^{1}_{m} (M)$. Assume otherwise, that $f \notin \mathcal{F}^{1}_{m} (M)$, so there exists $g \in \Diff_{m}(M)$ close enough to $f$ with a non-hyperbolic periodic point $p$. Thus, according to \cite[Lemma~5.1]{ArCa13}, there exists $h \in \Diff_{m}(M)$ which is close enough to $f$ and has two periodic points with different indices. Since $f \in \mathcal{J}$, by Lemma~\ref{b.carvalho-lemma-volume-preserving}, $f$ has two hyperbolic periodic points with different indices. Yet, this contradicts the assumption on $f$. Therefore, $f \in \mathcal{F}^{1}_{m} (M)$.

\noindent $(4) \Rightarrow (5)$  This was established in \cite[Theorem~1.1]{ArCa13}.

\noindent $(5) \Rightarrow (1)$ This is clear by Theorem~\ref{maintheo-weak-hmp}(c). 
\end{proof}


\subsection*{Acknowledgments}
L.S. is partially supported by FAPERJ-Funda\c c\~ao
  Carlos Chagas Filho de Amparo \`a Pesquisa do Estado do Rio de
  Janeiro Projects APQ1-E-26/211.690/2021 SEI-260003/015270/2021 and
  JCNE-E-26/200.271/2023 SEI-260003/000640/2023, CAPES-Coordena\c c\~ao
  de Aperfei\c{c}oamento de Pessoal de N\'ivel Superior - Finance
  Code 001 and PROEXT-PG project Dynamic Women - Din\^amicas,
  Projeto Universal CNPq 404943/2023-3 and Projeto Universal CNPq 401737/2025-0. 
  MC is partially supported by Portuguese public funds through 
  FCT - Funda\c c\~ao para a Ci\^encia e a Tecnologia, I.P., 
  under the projects with references UIDB/00144/2020, UIDP/00144/2020 and 
  PTDC/MAT-PUR/4048/2021.


\def\cprime{$'$}

\end{document}